\documentclass[12pt,reqno,a4paper]{amsart}
\usepackage{extsizes}
\usepackage{blindtext}
\usepackage{fullpage}
\usepackage{mathtools}
\usepackage{longtable}
\usepackage{amsmath,amssymb,amsthm}
\usepackage{amscd}
\usepackage{bm}
\usepackage{hyperref}

\usepackage{dsfont}
\usepackage{enumerate}
\usepackage{epsfig}
\usepackage{float, graphicx}
\usepackage{latexsym, amsxtra}
\usepackage{mathrsfs}
\usepackage{multicol}
\usepackage[normalem]{ulem}
\usepackage{psfrag}
\usepackage[parfill]{parskip}
\usepackage{stmaryrd}
\usepackage{tikz}
\usepackage[T1]{fontenc}
\usepackage{url}
\usepackage{verbatim}
\usepackage{indentfirst}
\usepackage{tikz-cd}

\usepackage{svg}
%\usepackage{float}
%
%\floatstyle{boxed} 
%\restylefloat{figure}

\usepackage[
ordering=Kac,
edge/.style=blue,
indefinite-edge={draw=green,fill=white,densely dashed},
indefinite-edge-ratio=5,
mark=o,
root-radius=.06cm]
{dynkin-diagrams}

\usepackage{mathtools}

\flushbottom

\makeatletter
\def\thm@space@setup{%
	\thm@preskip=2ex \thm@postskip=2ex
}
\makeatother

\oddsidemargin=0in
\evensidemargin=0in
\textwidth=6.5in
\setlength{\unitlength}{1cm}
\setlength{\parindent}{0.6cm}

\hypersetup{hidelinks}

\newtheorem{thm}{Theorem~}[section]
\newtheorem{lem}[thm]{Lemma~}

\newtheorem{prop}[thm]{Proposition~}

\newtheorem{cor}[thm]{Corollary~}

\newtheorem{conj}[thm]{Conjecture~}

\theoremstyle{remark}
\newtheorem{rmk}[thm]{Remark~}
\newtheorem{ex}[thm]{Example~}

\theoremstyle{definition}
\newtheorem{defn}[thm]{Definition~}

\newtheorem{cond}[thm]{Condition~}

\newcommand{\CC}{\mathbb{C}}
\newcommand{\ZZ}{\mathbb{Z}}

\newcommand{\LL}{\mathbb{L}}
\newcommand{\PP}{\mathbb{P}}

\newcommand{\QQ}{\mathbb{Q}}
\newcommand{\DD}{\mathbb{D}}
\newcommand{\HH}{\mathbb{H}}
\newcommand{\BB}{\mathbb{B}}

\newcommand{\Prd}{\mathscr{P}}

\newcommand{\X}{\mathscr{X}}
\newcommand{\Y}{\mathscr{Y}}

\newcommand{\sU}{\mathscr{U}}

\newcommand{\calM}{\mathcal{M}}

\newcommand{\calO}{\mathcal{O}}

\newcommand{\calT}{\mathcal{T}}
\newcommand{\calF}{\mathcal{F}}
\newcommand{\calN}{\mathcal{N}}

\newcommand\Aut{\mathrm{Aut}}

\newcommand\PU{\mathrm{PU}}

\newcommand\SL{\mathrm{SL}}

\newcommand\IV{\mathrm{IV}}

\newcommand\Spec{\mathrm{Spec}}

\newcommand{\Pic}{\mathrm{Pic}}

\newcommand{\Gr}{\mathrm{Gr}}
\newcommand{\GL}{\mathrm{GL}}

\newcommand{\Hom}{\mathrm{Hom}}

\newcommand{\IC}{\mathrm{IC}}

\newcommand{\bs}{\backslash}

\title{Calabi--Yau Varieties via Cyclic Covers, and Complex Hyperbolic Structures for their Moduli Spaces}
\vspace{1.2cm}
\author{Chenglong Yu, Zhiwei Zheng}
\date{}

\newcommand{\Addresses}{{% additional braces for segregating \footnotesize
		\bigskip
		\footnotesize
		
		C.~Yu, \textsc{Center for Mathematics and Interdisciplinary Sciences, Fudan University and
Shanghai Institute for Mathematics and Interdisciplinary Sciences (SIMIS), Shanghai, China}\par\nopagebreak
		\textit{E-mail address}: \texttt{yuchenglong@simis.cn}
		
		\medskip
		
		Z.~Zheng, \textsc{Tsinghua University, Beijing, China}\par\nopagebreak
		\textit{E-mail address}: \texttt{zhengzhiwei@mail.tsinghua.edu.cn}
}}

\begin{document}
	\bibliographystyle{amsalpha}

	\begin{abstract}
		In this paper, we mainly study Calabi--Yau varieties that arise as triple covers of $(\PP^1)^n$ branched along simple normal crossing divisors. For some of those families of Calabi--Yau varieties, the period maps factor through arithmetic quotients of complex hyperbolic balls. We give a classification of such examples. One of the families was previously studied by Voisin, Borcea and Rohde. For these ball-type cases, we will show arithmeticity of the monodromy groups. These ball quotients are all commensurable to ball quotients in Deligne--Mostow theory. As a byproduct, we prove some commensurability relations among arithmetic groups in Deligne--Mostow theory.
	\end{abstract}
	
	\maketitle

\setcounter{tocdepth}{1}
\tableofcontents
	
\section{Introduction}
\label{section: introduction}
An important problem in algebraic geometry is to find locally symmetric varieties with modular interpretations. It is well known that the moduli spaces of polarized abelian varieties are arithmetic quotients of Siegel upper half spaces, and the moduli spaces of polarized $K3$ surfaces are arithmetic quotients of type IV domains. In \cite{deligne1986monodromy}, Deligne and Mostow considered moduli spaces of points on $\PP^1$, and found moduli spaces that are discrete quotients of complex hyperbolic balls.

There is a series of work realizing moduli spaces as arithmetic quotients of complex hyperbolic balls, for example, \cite{allcock2002complex, allcock2011moduli} on cubic surfaces and cubic threefolds, \cite{looijenga2007period} on cubic threefolds, \cite{kondo2000complex, kondo2000moduli} on curves of genus $3$ and $4$, \cite{yu2020moduli,yu2023moduli} on cubic fourfolds or nodal sextic curves with specified group actions. One common feature for most of these constructions is the so-called cyclic-cover method, namely, instead of the usual period map, one consider the Hodge structures of certain cyclic covers (with the original varieties being the branch locus). For example, Kond\=o \cite{kondo2000moduli} studied moduli spaces of curves of genus $4$ through periods of $K3$ surfaces that arise as triple covers of $(\PP^1)^2$. 

In this paper, we propose more constructions of Calabi--Yau covers such that the period maps factor through arithmetic ball quotients. Let $L$ be a line bundle on a smooth projective variety $Z$, and $V(L)$ the total space of $L$ with projection map $\pi_L\colon V(L)\to Z$. Let $d\ge 2$ be an integer, and suppose $s\in H^0(Z, L^d)$ is nonzero section of $L^d$ defining a normal crossing divisor $D=\sum\limits_{i=1}^m D_i$. Let $t$ be the tautological section of $\pi_L^*(L)$ over $V(L)$. Then the equation $t^d=\pi_L^*(s)$ defines a projective subvariety $Y$ in $V(L)$. The map $Y\to Z$ is a finite morphism of degree $d$ branched along $D$. There is an action of $\mu_d=\langle\exp({2\pi \sqrt{-1}\over d})\rangle$ on $Y$ given by multiplying $\exp({2\pi \sqrt{-1}\over d})$ on the variable $t$. Such covers $Y$ are projective orbifolds with finite quotient singularities, and their Hodge theory is well established; see \cite{steenbrink1977mixed}. We will review this in \S\ref{subsection: projective orbifold}.

Denote $L_i=\calO(D_i)$, then $L_1+\cdots+L_m=dL$ in $\Pic(Z)$. When the partition type $T=(L_1, \cdots, L_m)$ is fixed and divisors $D_1, \cdots, D_m$ are allowed to vary, we obtain a parameter space $\sU_T$ of normal crossing divisors of type $T$. If $K_Z=-(d-1)L$, the cover $Y$ is a Calabi--Yau orbifold. Consider the induced action of $\mu_d$ on $Y$. Then $\dim H^{n,0}(Y)=1$ and $H^{n,0}(Y)$ lies in the eigenspace $H^n_\chi(Y,\CC)\subset H^n(Y,\CC)$ where $\chi\colon \mu_d\hookrightarrow \CC^\times$ is the tautological embedding.

If $H^n_\chi(Y,\CC)=H^{n,0}(Y)\oplus H^{n-1, 1}_\chi(Y)$, then it is a Hodge structure of ball type, see \S\ref{subsection: HS of ball type}. In this case, the variation of Hodge structures over $\sU_T$ induces a period map with values in a complex hyperbolic ball (after taking quotient by the monodromy group). We then simply say that the period map for the cyclic covers $Y$ factors through a ball quotient. We also say that the partition $T$ is of ball type. The following are some known examples.
\begin{ex}[Sheng--Xu--Zuo]
\label{example: sheng}
Let $(Z, d, L)=(\PP^3, 3, \calO(2))$. Take the partition $T$ with $L_1=\cdots=L_6=\calO(1)$. In this case, we have $h^{2,1}=h_{\chi}^{2,1}=3$ and $h_{\chi}^{1,2}=0$, hence the period domain is a complex hyperbolic ball of dimension $3$. This case is studied in \cite{sheng2013maximal} and the global Torelli theorem is proved in \cite{sheng2019global}. When $D$ is not normal crossing, this construction gives different complete families of Calabi--Yau varieties with $h^{2,1}=h^{2,1}_{\chi}=2$ or $1, 0$. For $(Z, d, L)=(\PP^3, 3, \calO(2))$, all the other partitions of $\calO(6)$ do not give ball-type period domains. See \S\ref{section: case Pn} for more details.
\end{ex}

\begin{ex}[Voisin--Borcea--Rohde]
\label{example: voisin}
 Let $(Z, d, L)=((\PP^1)^3, 3, \calO(3))$. Take the partition $T$ with $L_1=\calO(3,3,0)$ and $L_2=\calO(0,0,3)$. Then the cyclic cover $Y$ is a degree-$3$ quotient of a product of a K3 surface and an elliptic curve. Then the period domain is a complex hyperbolic ball of dimension $9$, and the global Torelli holds. This was previously studied by Rohde \cite{rohde2009cyclic}. The method was originally developed by Voisin \cite{voisin1993miroirs} and Borcea \cite{borcea1997k3} when they constructed Calabi--Yau varieties using involution of K3 surfaces.
\end{ex}

Before stating our main result, we introduce the notion of half-twists.

\begin{defn} 
\label{definition: fermat half twist}
Let $d=3$ and $T=(L_1,\cdots,L_m)$ be a partition of $3L$ on $Z$. The Fermat-type half-twist of ${T}$ is a partition of $3L\boxtimes \calO_{\PP^1}(3)$ on $Z\times \PP^1$ defined by $\widetilde{T}=(L_1\boxtimes \calO_{\PP^1}, \cdots, L_m\boxtimes \calO_{\PP^1}, \calO_{Z}\boxtimes \calO_{\PP^1}(3))$.
\end{defn}

Fermat-type half-twist construction transforms ball-type examples from dimension $n-1$ to $n$, unchanging the dimension of balls; see Proposition \ref{proposition: half twist ball}. It corresponds to van Geemen's half-twist construction for Hodge structures; see \cite{van2001half}\cite{van2002half}. We will explain more details in \S\ref{subsection: half-twist}. 

The first main result of this paper is an explicit classification of ball-type partitions of $dL$ for $(Z, d, L)=((\PP^1)^n, 3, \calO(1)^{\boxtimes n})$. All ball-type cases for $n\geq 3$ can be obtained by taking Fermat-type half-twists or refinements of four cases in dimension $n=3$ and one case in dimension $n=4$.

\begin{thm}
\label{theorem: main}
Let $(Z, d, L)=((\PP^1)^n, 3, \calO(1)^{\boxtimes n})$. Ball-type partitions $T=(L_1,\cdots, L_m)$ for $3L$ can be described as follows up to permutations.

When $n=2$, all partitions give ball-type examples.

When $n=3$, each part $L_j$ of $T$ has at least one zero component. Equivalently $T$ is a refined partition of the following four:
\begin{enumerate}[(i)]
\item $L_1=\calO(3,3,0)$ and $L_2=\calO(0,0,3)$;
\item $L_1=\calO(3,2,0)$ and $L_2=\calO(0,1,3)$;
\item $L_1=\calO(2,2,0)$, $L_2=\calO(1,0,2)$ and $L_3=\calO(0,1,1)$;
\item $L_1=\calO(2,1,0)$, $L_2=\calO(1,0,2)$ and $L_3=\calO(0,2,1)$.
\end{enumerate}
The dimensions of the complex hyperbolic balls for the above four maximal cases are $9,9,7,6$, respectively.

When $n=4$, the partition $T$ is either a refinement of 
\begin{equation*}
L_1=\calO(1,3,0,0), L_2=\calO(1,0,3,0), L_3=\calO(1,0,0,3)
\end{equation*}
(the ball for this maximal case has dimension $9$) or a Fermat-type half-twist of a ball-type partition for $n=3$.

When $n\geq 5$, the partition $T$ is
a Fermat-type half-twist of a ball-type partition for $n-1$.
\end{thm}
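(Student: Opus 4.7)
The overall plan is to translate the ball-type condition into an explicit combinatorial condition on the matrix of multidegrees $a_{i,j}$, where $L_i = \calO(a_{i,1}, \ldots, a_{i,n})$ and $\sum_i a_{i,j} = 3$ for every $j$, and then to combine direct enumeration in low dimension with an induction via half-twists for large $n$.

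First I would use the Steenbrink/orbifold Hodge theory recalled in the projective orbifolds subsection to express each piece $H^{p,q}_\chi(Y)$ of the cyclic triple cover as a space of twisted logarithmic forms on $Z = (\PP^1)^n$. Under the Calabi-Yau condition $K_Z = -2L$ one automatically obtains $h^{n,0}_\chi = 1$, and by the symmetry between the $\chi$ and $\bar\chi$ eigenspaces the ball-type condition reduces to the single vanishing $h^{n-2,2}_\chi(Y) = 0$. Computing this Hodge number through a Koszul/residue resolution yields a closed formula whose contributions are indexed by unordered pairs of parts $\{L_{i_1},L_{i_2}\}$; the ball-type condition then becomes a concrete system of inequalities on the matrix $(a_{i,j})$.

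Next I would dispose of the base cases. For $n = 2$ there is simply no room for $h^{0,2}_\chi$ to be nonzero, so every partition is ball-type. For $n = 3$ and $n = 4$ I would enumerate all partitions of $3L = \calO(3, \ldots, 3)$ satisfying the combinatorial criterion above; the four maximal partitions for $n = 3$ and the single new maximal partition for $n = 4$ should drop out of finite casework on the allowed supports, with the remaining ball-type partitions being their refinements. The ball dimensions $9, 9, 7, 6$ (respectively $9$ in dimension four) are then recovered as $h^{n-1,1}_\chi$, or equivalently as the dimension of the parameter space $\sU_T$.

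Finally, the half-twist proposition says that adjoining a Fermat part $\calO_Z \boxtimes \calO_{\PP^1}(3)$ preserves ball-type and raises the ambient dimension by one. The inductive step for $n \geq 5$ thus reduces to showing that any ball-type partition on $(\PP^1)^n$ must, up to reordering the factors, contain a part of the form $\calO_{Z'} \boxtimes \calO_{\PP^1}(3)$. I expect this last claim to be the main technical obstacle. My proposed approach is to assign to each part $L_i$ its support $S_i = \{j : a_{i,j} > 0\}$ and to analyze the resulting incidence hypergraph on $\{1, \ldots, n\}$; a counting argument combined with the Koszul formula from the first step should show that in dimension $n \geq 5$, any configuration in which no $S_i$ is a singleton forces a nonzero class in $H^{n-2,2}_\chi$, contradicting ball-type and closing the induction.
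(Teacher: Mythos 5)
Your plan diverges from the paper's in one structurally important place, and the divergence creates a genuine gap. You claim that by the symmetry between the $\chi$ and $\bar\chi$ eigenspaces, the ball-type condition collapses to the single vanishing $h^{n-2,2}_\chi = 0$, and you then propose to compute only that one Hodge number via a Koszul/residue resolution. The ball-type condition is $h^{p,n-p}_\chi = 0$ for \emph{all} $p \le n-2$. The only Hodge number that vanishes automatically by conjugation symmetry is $h^{0,n}_\chi = h^{n,0}_{\bar\chi} = 0$; nothing in the conjugation $h^{p,q}_\chi = h^{q,p}_{\bar\chi}$ nor in Griffiths transversality implies, say, that $h^{2,2}_\chi = 0$ forces $h^{1,3}_\chi = 0$ when $n = 4$. (Having $h^{n-2,2}_\chi = 0$ makes the Higgs field out of $H^{n-1,1}_\chi$ vanish, but says nothing about the lower pieces of the filtration; an abstract polarized Hodge structure with $h^{n,0}=1$, $h^{n-2,2}=0$ and $h^{n-3,3}\ne 0$ is perfectly consistent.) The paper sidesteps this entirely: it never computes any individual $h^{p,n-p}_\chi$ for $p\le n-2$. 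Instead it computes the \emph{sum} $\sum_{1\le p\le n-2} h^{p,n-p}_\chi$ as the difference of two quantities that are directly accessible — the primitive middle Betti number $b'_n(Y)$ (via the Hurwitz/Euler characteristic identity of Proposition \ref{proposition: euler Y} and the log-derivative expansion of Lemma \ref{lemma: derivative log}), and $1 + h^{n-1,1}_\chi$ (via infinitesimal Torelli plus the GIT dimension count of Lemma \ref{lemma: git dim}). Because that difference is manifestly a sum of nonnegative combinatorial terms (Proposition \ref{proposition: difference}), its vanishing is an honest equivalent of ball type, with no need to analyze individual Hodge numbers.

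A second, related gap: you expect your Koszul formula to be indexed by unordered pairs of parts $\{L_{i_1},L_{i_2}\}$. The paper's criterion (Theorem \ref{theorem: main1}) has two independent clauses: each $\alpha_j$ has at most two nonzero entries (a condition on \emph{single} parts, coming from set partitions $I$ of type $(1,\ldots,1,3)$), and each $\alpha_{j_1}+\alpha_{j_2}$ has at most three nonzero entries (the pair condition, from type $(1,\ldots,1,2,2)$). A contribution scheme indexed only by pairs would fail to detect a single part with three or more nonzero components, so even if your reduction were valid you would classify too many partitions as ball-type. Finally, the $n\ge 5$ induction step — that no ball-type partition exists without a Fermat part — is where the real combinatorial work happens; the paper's argument (assume every factor is "linked" to some other factor through a common part, deduce all such links pass through a single fixed pair of factors, and then observe the degree sum $\ge 1 + (n-2) > 3$) is short but not a routine counting argument, and "should show" is too much hand-waving at exactly the step you yourself flag as the main technical obstacle.
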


The case $(i)$ in Theorem \ref{theorem: main} was previously studied by Voisin, Borcea, and Rohde; see Example \ref{example: voisin}. 

We now introduce our approach to find examples with ball-type period domains. When $Z$ is regular and rigid, and $Y$ is Calabi--Yau, we show that the variation of Hodge structures of $Y$ over $\sU_T$ satisfies certain infinitesimal Torelli. This allows us to calculate the dimension of $H^{n-1,1}_\chi$; see Theorem \ref{theorem: local torelli}. In particular, when $Z$ is a product of projective spaces, a generic point of $\sU_T$ is stable and we have a GIT quotient $\calM_T$ of $\sU_T$ by $\Aut(Z)$; see Proposition \ref{proposition: stability}. Then $\dim H^{n-1,1}_\chi=\dim \calM_T$.

If $H^{n-p,p}_\chi(Y)=0$ for $p\ge 2$ (which is equivalent to $\dim H^n_\chi(Y,\CC)=1+\dim H^{n-1,1}_\chi$), then the eigenspace $H_\chi^n(Y,\CC)=H_\chi^{n,0}\oplus H_\chi^{n-1,1}$ is in a natural way a unitary Hermitian form, and the period map factors through a ball quotient. Therefore, we aim to find cases when $\dim H^n_\chi=1+\dim H^{n-1,1}_\chi$. We hence need to calculate $\dim H^n_\chi$ and $\dim H^{n-1,1}_\chi=\dim \calM_T$. This is accomplished in \S\ref{section: main} for the cases $(Z,d,L)=((\PP^1)^n, 3, \calO(1)^{\boxtimes n}), (\PP^3, 3, \calO(2))$ and $(\PP^3, 5, \calO(1))$.

As pointed out by Prof. Chin-Lung Wang, the Weil--Petersson metric on the moduli of Calabi--Yau manifolds is complex hyperbolic if and only if the Yukawa coupling length is minimal; see \cite[Theorem 2.1]{wang2003curvature}. Especially, when $H^{n-2,2}_\chi(Y)=0$, the GIT quotient $\calM_T$ has a natural complex hyperbolic structure.

We emphasize the relation between our constructions and Deligne--Mostow theory. The relation between $K3$ surfaces with isotrivial fibrations and Deligne--Mostow theory has been noticed and investigated in \cite{dolgachev2007moduli}\cite{moonen2018Deligne--Mostow}\cite{zhong2022ball}. In case $((\PP^1)^2, 3, \calO(1)^{\boxtimes 2})$, a simple normal crossing curve of degree $(3,3)$ gives rise to a $K3$ surface with two isotrivial elliptic fibrations, which correspond to two Deligne--Mostow data. Based on this, we prove that some Deligne--Mostow lattices are actually commensurable; see Corollary \ref{corollary: comm}. We will generalize this to higher dimensional cases in \S\ref{section: 3 DM}. For example, we show that the monodromy groups in both cases $(i), (ii)$ of Theorem \ref{theorem: main} are commensurable to the arithmetic group in Deligne--Mostow theory with type $(({1\over 6})^{12})$, see Corollary \ref{corollary: comm}.

This paper is organized as follows. In \S\ref{section: infinitesimal torelli} we study the variation of Hodge structures of cyclic covers branching along normal crossing divisors, and prove the infinitesimal Torelli theorem under Condition \ref{condition}. In \S\ref{section: main} we proved Theorem \ref{theorem: main} with detailed calculation. In \S\ref{section: refinement, incidence and complete} we use a generalization of the Clemens-Schmid sequence obtained by Kerr-Laza \cite{kerr2021hodge} to study the Hodge degeneration associated with a refinement. In particular, we show that a refinement of a ball-type refinement is still of ball-type; see Theorem \ref{theorem: refinement}. In \S\ref{section: completeness} we classify ball-type partitions $T$ for $Z=(\PP^1)^n$ such that the family of crepant resolutions of $Y$ over $\sU_T$ is complete. In \S\ref{section: period map} we prove that the monodromy groups for the ball-type cases obtained in Theorem \ref{theorem: main} are arithmetic. In \S\ref{section: relation to DM} and \S\ref{section: 3 DM} we investigate the relation between our constructions and Deligne--Mostow theory.
	
\noindent \textbf{Acknowledgement}: The first author is supported by the national key research and development program of China (No. 2022YFA1007100) and NSFC 12201337. The second author is partially supported by NSFC 12301058. This work started when the second author was a postdoc at the Max Planck Institute of Mathematics in 2020. The second author thanks MPIM for its support. The authors thank Dingxin Zhang for discussion on calculation of Hodge numbers. The authors thank Chin-Lung Wang for pointing out the relation between curvature of Weil--Petersson metrics and Yukawa coupling. The authors thank Bong Lian, Mao Sheng, Chin-Lung Wang and Kang Zuo for their interests and helpful discussion.

\section{Infinitesimal Torelli}
\label{section: infinitesimal torelli}

In this section, we study the variation of Hodge structures induced by a family of cyclic covers of a fixed smooth projective variety branching along simple normal crossing divisors, and prove a version of infinitesimal Torelli theorem (Theorem \ref{theorem: local torelli}).

\subsection{Projective Orbifolds}
\label{subsection: projective orbifold}
We first review the fundamental facts on Hodge theory of projective orbifolds and collect some useful facts on cyclic covers. All varieties in this paper are defined over the complex field $\CC$.
	
A projective orbifold is a projective variety that is locally analytically isomorphic to an open neighborhood of $0$ in $\CC^n/G$ with $G$ a finite subgroup of $\GL(n, \CC)$. We review the Hodge theory for projective orbifolds, which was developed by Bailey \cite{bailey1957imbedding} and Steenbrink \cite{steenbrink1977mixed}, see also \cite[Theorem 2.43]{peters2008mixed} and \cite[Appendix A.3]{cox1999mirror}. For a projective orbifold $Y$, let $Y_0$ be the smooth part of $Y$. Let $\widehat{\Omega}_Y^p=(\Omega_Y^p)^{**}$ be the reflexive hull of $\Omega_Y^p$. Let $j\colon Y_0\hookrightarrow Y$ be the natural inclusion. In this case, we have $\widehat{\Omega}^p_Y=j_*(\Omega^p_{Y_0})$, see \cite[page 507]{arapura2014hodge}. The singular cohomology $H^*(Y, \QQ)$ has a pure Hodge structure and admits Hodge decomposition $H^*(Y, \CC)=\bigoplus H^{p,q}(Y)$ with $H^{p,q}(Y)\cong H^q(Y, \widehat{\Omega}^p_Y)$.

Given a smooth projective variety $Z$ of dimension $n$, an integer $d\ge 2$, and a line bundle $L$ on $Z$. Suppose that $D$ is an effective divisor defined by section $s$ of $L^d$.
\begin{defn}
Let $\calF=\calO_Z\oplus L^{-1}\oplus L^{-2}\oplus \cdots \oplus L^{-d+1}$ be an $\calO_Z$-algebra with the algebra structures given by the morphism $L^{-d}\to \calO_Z$, $x\mapsto sx$. Then $\Spec(\calF) \to Z$ is a finite morphism of degree $d$ ramified at $D$. We call the normalization of $\Spec(\calF)$ the degree-$d$ cyclic cover of $Z$ branching along $D$.
\end{defn}

Suppose that we have a decomposition $dL=L_1+\cdots+L_m$ in $\Pic(Z)$. Denote $T=(L_1, \cdots, L_m)$ and call it the partition type. We consider tuples $(Z, d, L, T)$ such that there exist sections $s_i\in H^0(Z, L_i)$ with $s_i$ defining a smooth divisor $D_i$, and $s=s_1 s_2\cdots s_m\in H^0(Z, L)$ defining a simple normal crossing divisor $D=\sum\limits_{i=1}^m D_i$. Then $Y=\Spec(\calF)$ is a projective orbifold and automatically normal. Therefore, we have a cyclic cover $\pi\colon Y\to Z$ of degree $d$ branching along $D$. This definition is equivalent with the one we discussed in the introduction; see also \cite[Lemma 1.1]{arapura2014hodge}. In this paper, we study the cyclic covers that give rise to Calabi--Yau orbifolds.
	
\begin{defn}
\label{definition: CY orbifold}
Let $Y$ be a projective orbifold of dimension $n$. Suppose $H^0(Y, \widehat{\Omega}^k_{Y})=0$ for $k=1,\cdots, n-1$ and $\widehat{\Omega}^n_{Y} \cong \calO_{Y}$, then we call $Y$ a Calabi--Yau orbifold.
\end{defn}

\begin{lem}
Let $(Z, d, L, D)$ be the data associated with a cyclic cover $Y\to Z$ of degree $d$, such that $Z$ does not admit nontrivial holomorphic forms and $D$ is simple normal crossing. Suppose that $(d-1)L=-K_Z$. Then $Y$ is a projective Calabi--Yau orbifold.
\end{lem}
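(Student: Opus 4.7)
The plan is to verify the two conditions of Definition \ref{definition: CY orbifold} separately, exploiting the $\mu_d$-Galois action on the cyclic cover $\pi\colon Y \to Z$ to decompose $\pi_* \widehat{\Omega}^{\bullet}_Y$ into character eigensheaves on $Z$ and analyze each eigenpiece using data downstairs.

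For the canonical sheaf, I would invoke the orbifold Hurwitz formula for cyclic covers: $\widehat{\Omega}^n_Y \cong \pi^{[*]}(K_Z \otimes L^{d-1})$, where $\pi^{[*]}$ denotes reflexive pullback. The cleanest way to verify this is to exhibit an explicit global generator on the smooth locus $Y_0$: if $\omega$ is a local generator of $K_Z \otimes L^{d-1}$ then the residue $\mathrm{Res}\,\frac{\pi^*\omega \wedge dt}{t^d - \pi^* s}$ extends to a nowhere-vanishing section of $\widehat{\Omega}^n_Y$, with the SNC hypothesis on $D$ ensuring that this extension is valid across the orbifold singular locus lying over $D_{\mathrm{sing}}$. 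The assumption $(d-1)L = -K_Z$ then trivializes the pulled-back bundle, yielding $\widehat{\Omega}^n_Y \cong \calO_Y$.

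For the vanishing $H^0(Y, \widehat{\Omega}^k_Y) = 0$ with $1 \le k \le n-1$, I would use the Galois decomposition
\[
H^0(Y, \widehat{\Omega}^k_Y) \cong \bigoplus_{j=0}^{d-1} H^0\bigl(Z, (\pi_* \widehat{\Omega}^k_Y)_j\bigr).
\]
The invariant summand $(\pi_* \widehat{\Omega}^k_Y)_0 \cong \Omega^k_Z$ contributes zero by the hypothesis that $Z$ admits no nontrivial holomorphic forms. For the non-invariant summands, the standard cyclic-cover computation (adapted to the SNC setting via rounding corrections along the components $D_i$) identifies $(\pi_* \widehat{\Omega}^k_Y)_j$, for $1 \le j \le d-1$, as a subsheaf of $\Omega^k_Z(\log D) \otimes L^{-j}$.

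The main obstacle is controlling these log-twisted eigencomponents, since naively $\Omega^k_Z(\log D) \otimes L^{-j}$ can acquire global sections from log-poles along $D$ even when $\Omega^k_Z$ has none. I would handle this by induction on the depth of the SNC stratification, using the residue short exact sequences along each $D_i$ to reduce the question to twisted holomorphic forms on the strata $D_{i_1} \cap \cdots \cap D_{i_r}$. On each stratum, the CY identity $K_Z = -(d-1)L$ makes $L^{-j}$ sufficiently negative so that Serre duality, combined with the absence of holomorphic forms on $Z$ and on the strata, forces the relevant $H^0$ to vanish. Assembling these vanishings across $j$ and across strata produces the desired conclusion $H^0(Y, \widehat{\Omega}^k_Y) = 0$ for $1 \le k \le n-1$.
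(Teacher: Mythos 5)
The paper's own proof is two lines: it cites Arapura's weak Lefschetz theorem for branched covers (\cite{arapura2014hodge}, Cor.\ 1.6) to get $H^i(Y,\QQ)\cong H^i(Z,\QQ)$ for $i\le n-1$, hence $H^{i,0}(Y)\cong H^{i,0}(Z)=0$; and it trivializes the canonical sheaf by adjunction, $K_Y\cong\pi^*(K_Z\otimes L^{d-1})$. Your treatment of $\widehat\Omega^n_Y$ via the reflexive Hurwitz formula and the residue generator is correct and is the same adjunction computation, just spelled out.

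The vanishing of $H^0(Y,\widehat\Omega^k_Y)$ for $1\le k\le n-1$ is where you take a genuinely different, sheaf-theoretic route, and there is a real gap in it. Decomposing $\pi_*\widehat\Omega^k_Y$ into $\mu_d$-eigensheaves is fine, and the invariant piece is handled correctly. But the mechanism you invoke for the nontrivial characters does not deliver the vanishing. Serre duality pairs $H^0\bigl(Z,\Omega^k_Z(\log D)\otimes L^{-j}\bigr)$ with an $H^n$ group that has no reason to vanish; duality alone is the wrong tool here. What one actually needs is a Bogomolov--Sommese-type vanishing for the log pair $(Z,D)$: a nonzero section would give $L^j\hookrightarrow\Omega^k_Z(\log D)$, forcing $\kappa(L^j)\le k<n$, which contradicts $L$ being big. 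This requires positivity of $L$ (equivalently of $-K_Z$), which your sketch never states — though to be fair the paper's own citation also tacitly assumes $L$ ample, as the authors acknowledge later in \S 3.

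A second problem: you appeal to ``the absence of holomorphic forms on $Z$ and on the strata.'' The latter is not a hypothesis of the lemma and is false in the generality of the lemma's statement: for $Z=\PP^3$, $d=3$, $L=\calO(2)$, the partition $(\calO(4),\calO(2))$ has $D_1$ a smooth quartic, i.e.\ a $K3$ surface with $H^{2,0}(D_1)\ne 0$. If $L$ is big and you use Bogomolov--Sommese directly on $Z$, the detour through the strata is unnecessary; as written, the stratification step introduces an assumption you cannot justify. In short, the canonical-bundle half is fine, but the vanishing half needs to replace ``Serre duality'' by a genuine vanishing theorem and to make the positivity of $L$ explicit.
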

\begin{proof}
By \cite[Corollary 1.6]{arapura2014hodge}, there is an isomorphism $H^i(Z, \QQ)\cong H^i(Y, \QQ)$ for $0\le i\le n-1$. Thus $H^{i,0}(Y)\cong H^{i,0}(Z)=0$. By the adjunction formula, the canonical bundle $K_Y\cong \pi^*(K_Z\otimes L^{d-1})$ is trivial.
\end{proof}

\subsection{Period domain}
Let $Z$ be a smooth projective variety, $L\in \Pic(Z)$ and $d\ge 2$ be an integer such that $-K_Z=(d-1)L$. Assume that we have a tuple $(Z,d,L, L_1, \cdots, L_m)$ such that there exists $s_i\in H^0(Z, L_i)$ defining a smooth divisor $D_i$, and $s=s_1\cdots s_k$ defining a simple normal crossing divisor $D=\sum\limits_{i=1}^m D_i$. 

Denote $T=(L_1, \cdots, L_m)$. Denote $\PP_T=\prod_{i=1}^k\PP(H^0(Z, L_i))$ the product of linear systems of $L_i$. Let $\sU_T$ be the Zariski open subset of $\PP_T$ consisting of simple normal crossing divisors of type $T$. The degree-$d$ cyclic covers of $Z$ branching along the divisors in $\sU_T$ form a family $\Y_T \to \sU_T$ of singular Calabi--Yau varieties. This family admits a simultaneous crepant resolution, see \S\ref{section: completeness}. The pure Hodge structures on the integral middle cohomology of the fibers of $\Y_T \to \sU_T$ form a VHS (variation of Hodge structures) $\HH_T\to \mathscr{U}_T$. This VHS admits an induced action of the cyclic group $\mu_d$.

Recall that $\chi\colon \mu_d\hookrightarrow \CC^{\times}$ is the tautological character of $\mu_d$. Let $\xi_d=\exp(\frac{2\pi\sqrt{-1}}{d})$. The middle cohomology group $H^n(Y, \QQ[\xi_d])$ admits an induced action of $\mu_d$. Denote by $H^n_\chi(Y,\QQ[\xi_d])\subset H^n(Y, \QQ[\xi_d])$ the $\chi$-eigenspace with respect to this action. We have $H^{n,0}(Y)\subset H^n_\chi(Y, \CC)$, see \cite[Lemma 1.2]{arapura2014hodge}. Denote by $h^{p,q}_\chi=\dim  H^{p,q}_\chi (Y, \CC)$. The Hodge structures of type $(1, h^{n-1,1}_{\chi},\cdots, h^{1,n-1}_{\chi}, h^{0,n}_{\chi})$ on $H^n_{\chi}(Y, \QQ[\xi_d])$ form a sub-VHS of $\HH_T$, which we denote by $H_{T,\chi}$. 

There is a polarization on $H^n_\chi(Y,\QQ[\xi_d])$ given by 
\begin{equation*}
H^n_\chi(Y,\QQ[\xi_d]) \times H^n_\chi(Y,\QQ[\xi_d])\to \QQ[\xi_d]
\end{equation*}
sending $(x,y)$ to $\int_Y x\overline{y}$. Let $\BB_T, \Gamma_T$ be the period domain and monodromy group for the VHS $\HH_{T,\chi}\to \sU_T$ together with the polarization.  Then we have an analytic map $\Prd\colon \mathscr{U}_T\longrightarrow \Gamma_T\bs\BB_T$. We call $\Prd$ the period map for the Calabi--Yau orbifolds $Y$.

One of our main goals is to find $(Z, d, L, T)$ with the period domain of ball type. This is true when $h^{n-2,2}_\chi(Y)=h^{n-3,3}_\chi(Y)=\cdots=h^{0,n}_\chi(Y)=0$. We will discuss this in Proposition \ref{proposition: ball type}.
 
\subsection{Infinitesimal Torelli}
Now we state and prove the infinitesimal Torelli theorem that we need for further calculation. We summarize the conditions we need for $(Z, d, L, T)$ in the following:
\begin{cond}
\label{condition}
\begin{enumerate}[(i)]
\item $Z$ is a smooth projective  variety with $H^{i,0}(Z)=0$ for $1\le i\le n-1$,
\item $Z$ is rigid, namely $H^0(Z, \calT_Z)=0$, 
\item $d\ge 3$,
\item there exists simple normal crossing divisor of type $T$, 
\item $-K_Z=(d-1)L$. 
\end{enumerate}
\end{cond}

Item (i) in Condition \ref{condition} implies $H^1(Z,\CC)=0$, which ensures that the identity component $\Aut^\circ (Z)$ of the regular automorphism group $\Aut(Z)$ naturally acts on the complete linear system of any line bundle on $Z$. The triviality of $H^1(Z, \calT_Z)$ means that $Z$ is rigid. Item (iv) in Condition \ref{condition} implies that $\sU_T$ is a nonempty Zariski open subspace of $\PP_T$.

\begin{thm}
\label{theorem: local torelli}
Suppose $(Z,d,L,T)$ satisfies Condition \ref{condition}. Then the tangent map of the period map $\Prd\colon \sU_T\to \BB_T/\Gamma_T$ at $D\in \mathscr{U}_T$ is $dp\colon T_{D}\mathscr{U}_T\to \Hom(H^{n,0}(Y), H^{n-1,1}_\chi(Y,\CC))$, and it fits into an exact sequence 
 \begin{equation*}
 H^0(Z, \calT_Z)\rightarrow T_{D}\mathscr{U}_T\xrightarrow[]{dp} \Hom(H^{n,0}, H^{n-1,1}_\chi) \to 0
 \end{equation*}
 Especially, if the stabilizer of $D\in \mathscr{U}_T$ under $\Aut^0(Z)$ is a discrete group, then $\dim(\mathscr{U}_T)-\dim \Aut(Z)=h^{n-1,1}_\chi(Y)$.
\end{thm}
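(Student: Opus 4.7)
The plan is to translate the statement into sheaf cohomology on $Z$ and then exploit the logarithmic residue sequence together with the rigidity hypothesis in Condition \ref{condition}(ii).

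First I would apply the Steenbrink-type eigenspace decomposition for the orbifold cyclic cover $\pi\colon Y\to Z$. For the tautological character $\chi$ this gives natural identifications
\begin{equation*}
H^q_\chi(Y,\widehat{\Omega}^p_Y)\cong H^q(Z,\Omega^p_Z(\log D)\otimes L^{-1}).
\end{equation*}
Using $K_Z=-(d-1)L$ one computes $K_Z(D)\otimes L^{-1}\cong \calO_Z$, so $H^{n,0}(Y)=H^{n,0}_\chi(Y)\cong H^0(Z,\calO_Z)=\CC$ and $H^{n-1,1}_\chi(Y)\cong H^1(Z,\Omega^{n-1}_Z(\log D)\otimes L^{-1})$. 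Combining the perfect wedge pairing $\Omega^1_Z(\log D)\otimes \Omega^{n-1}_Z(\log D)\to K_Z(D)$ with the same Calabi-Yau identity further simplifies
\begin{equation*}
\Omega^{n-1}_Z(\log D)\otimes L^{-1}\cong \calT_Z(-\log D),
\end{equation*}
so that $\Hom(H^{n,0},H^{n-1,1}_\chi)\cong H^1(Z,\calT_Z(-\log D))$. On the source side, the short exact sequence $0\to \calO_Z\xrightarrow{\cdot s_i} L_i\to L_i|_{D_i}\to 0$ together with $H^1(Z,\calO_Z)=0$ (from Condition \ref{condition}(i)) identifies $T_D\sU_T$ with $\bigoplus_i H^0(D_i,N_{D_i/Z})$.

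Once everything is expressed on $Z$, I would invoke the logarithmic residue sequence
\begin{equation*}
0\to \calT_Z(-\log D)\to \calT_Z\to \bigoplus_i N_{D_i/Z}\to 0
\end{equation*}
and take its cohomology:
\begin{equation*}
H^0(Z,\calT_Z)\to \bigoplus_i H^0(D_i,N_{D_i/Z})\to H^1(Z,\calT_Z(-\log D))\to H^1(Z,\calT_Z)=0,
\end{equation*}
the last vanishing being the rigidity of $Z$. The statement then reduces to identifying the tangent map $dp$ with this connecting homomorphism. By Griffiths transversality $dp$ factors as the Kodaira-Spencer map $T_D\sU_T\to H^1(Y,\calT_Y)^{\mu_d}$ followed by contraction with the holomorphic volume form; under pushforward along $\pi$ one has $H^1(Y,\calT_Y)^{\mu_d}\cong H^1(Z,\calT_Z(-\log D))$, and the contraction corresponds to the wedge-pairing identification above. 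The main obstacle is verifying that the resulting composite agrees on the nose with the log residue connecting map; I would check this by a local \v{C}ech computation around a smooth point of some $D_i$, where the cover has the standard local form $t^d=s_i$ and a first-order perturbation of $s_i$ by an element of $H^0(D_i,N_{D_i/Z})$ visibly produces the corresponding log residue cocycle in $\calT_Z(-\log D)$.

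Finally, the dimension count is immediate: if $\Aut^0(Z)$ has discrete stabilizer at $D$, then $H^0(Z,\calT_Z)\hookrightarrow T_D\sU_T$, so
\begin{equation*}
\dim\sU_T-\dim \Aut(Z)=\dim H^1(Z,\calT_Z(-\log D))=h^{n-1,1}_\chi(Y).
\end{equation*}
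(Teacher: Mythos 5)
Your argument is correct and arrives at the same exact sequence, but the route you take to identify $\Hom(H^{n,0},H^{n-1,1}_\chi)$ with $H^1(Z,\calT_Z(-\log D))$ is genuinely different from the paper's. The paper goes \emph{up} to $Y$: it uses the Esnault--Viehweg/Kawamata identification of the $\mu_d$-invariants $H^1(Y,\calT_Y)^{\mu_d}$ with $H^1(Z,\calT_Z(-\log D))$, and then applies the Calabi--Yau Kodaira--Spencer isomorphism $H^1(Y,\calT_Y)\cong\Hom(H^{n,0}(Y),H^{n-1,1}(Y))$ (contraction with the volume form), restricting to eigenspaces; for $m>1$ this requires the orbifold version of both statements, handled by the cited references. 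You instead work entirely \emph{downstairs}: you invoke the Steenbrink-type eigenspace decomposition $H^q_\chi(Y,\widehat{\Omega}^p_Y)\cong H^q(Z,\Omega^p_Z(\log D)\otimes L^{-1})$, and then use the perfect wedge pairing together with $K_Z(D)\otimes L^{-1}\cong\calO_Z$ to identify $\Omega^{n-1}_Z(\log D)\otimes L^{-1}\cong\calT_Z(-\log D)$ as sheaves on $Z$. This is a clean alternative: it bypasses the need to make sense of $\calT_Y$ and Kodaira--Spencer theory for the quotient-singular $Y$ when $m\ge 2$, at the cost of invoking the pushforward decomposition formula and a slightly more delicate bookkeeping of characters. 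Both routes leave essentially the same compatibility unproved in detail, namely that the Griffiths differential $dp$ really is the connecting map of the log residue sequence (under the respective identifications); you are upfront about this and propose the right check (a local \v{C}ech computation near a smooth point of a branch component), which would complete the argument. The final dimension count is handled the same way in both.
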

	
\begin{proof}
We first show the proof when the partition of $L^d$ is trivial, which means $m=1$. In this case $D$ is smooth and $Y$ is smooth. Closed embedding $j\colon D\to Z$ induces a short exact sequence
\begin{equation*}
0\to \calT_Z(-\log D)\to \calT_Z\to j_*\calN_{D/Z}\to 0,
\end{equation*}
where $\calN_{D/Z}$ is the normal bundle of $D$ in $Z$ and is isomorphic to $j^*\calO_Z(D)\cong j^*L^d$. The cohomology long exact sequence gives
\begin{equation*}
H^0(Z, \calT_Z(-\log D))\to H^0(Z, \calT_Z) \to H^0(Z, j_*\calN_{D/Z})\to H^1(Z, \calT_Z(-\log D))\to H^1(Z,\calT_Z)=0.
\end{equation*}
The last equality is by rigidity of $Z$ (item (ii) in Condition \ref{condition}). Next we show $H^0(Z, j_*\calN_{D/Z})\cong T_D \sU_T$ and $H^1(Z, \calT_Z(-\log D))\cong \Hom(H^{n,0}, H^{n-1,1}_\chi)$ in a natural way.	

The space $H^0(Z, j_*\calN_{D/Z})\cong H^0(D, L^d|_D)$ represents the infinitesimal deformation of $D$ in $Z$ and the image of $H^0(Z, \calT_Z)$ represents the deformation of $D$ induced by the action of $\Aut^\circ(Z)$. From the short exact sequence
		\begin{equation*}
			0\to \calO_Z\to \calO_Z(D) \to j_*j^*\calO_Z(D)\to 0,
		\end{equation*}
  we have the following long exact sequence (notice that $L^d=\calO_Z(D)$):
  \begin{equation*}
\CC\to H^0(Z, L^d)\to H^0(D, L^d|_D)\to H^1(Z, \calO_Z)=0
  \end{equation*}
So $H^0(D, L^d|_D)\cong  H^0(Z, L^d)/\CC$ is naturally identified with the tangent space $T_D\mathscr{U}_T$ of $\mathscr{U}_T\subset \PP(H^0(Z, L^d))$ at the point $D$. 

Denote by $\pi\colon Y\to Z$ the cyclic covering map. By \cite{esnault1992lectures}, the $\mu_d$-invariant part of $\pi_*(\calT_Y)$ is isomorphic to $\calT_Z(-\log D)$. So $H^1(Y, \calT_Y)^{\mu_d}\cong H^1(Z, \calT_Z(-\log D))$. Since $Y$ is a smooth Calabi--Yau manifold, we have the Kodaira-Spencer map 
\begin{equation*}
H^1(Y, \calT_Y)\to \Hom(H^{n,0}(Y), H^{n-1,1}(Y)),
\end{equation*}
which is an isomorphism. Since $H^{n,0}(Y)=H^{n,0}_\chi(Y)$, we have an isomorphism $H^1(Y, \calT_Y)^{\mu_d}\cong \Hom(H^{n,0}_\chi(Y), H^{n-1,1}_\chi(Y))$.

The proof for the general case follows from the same idea. Assume that $D=\sum\limits_{i=1}^k D_i$ and all components $D_i$ are smooth and intersect transversely. Denote $j_i\colon D_i\hookrightarrow Z$. We have the following short exact sequence.
		\begin{equation*}
			0\to \calT_Z(-\log D)\to \calT_Z\to \bigoplus_i {j_i}_*\calN_{D_i/Z}\to 0.
		\end{equation*}
		The induced long exact sequence gives the following.
		\begin{equation*}
			H^0(Z, \calT_Z)\to \bigoplus_i H^0(D_i, \calN_{D_i/Z})\to H^1(Z, \calT_Z(-\log D))\to 0.
		\end{equation*}
  From the same argument, $\bigoplus\limits_i H^0(D_i, \calN_{D_i/Z})\cong T_{D}\mathscr{U}_T$.
		Hence, 
  \begin{equation*}
  H^0(Z, \calT_Z)\to T_{D}\mathscr{U}_T\to H^1(Z, \calT_Z(-\log D))\to 0
  \end{equation*}
  is a long exact sequence. From \cite[\S2]{kawamata1985minimal}, \cite{esnault1992lectures}, \cite[Theorem 1.1]{zhang2012introduction}, we still have $H^1(Z, \calT_Z(-\log D))\cong H^1(Y,\calT_Y)^{\mu_d}$. This is isomorphic to $\Hom(H^{n,0}(Y), H^{n-1,1}_\chi(Y,\CC))$. 

  Finally, the stabilizer of $D$ under $\Aut^\circ(Z)$ is discrete if and only if $H^0(Z, \calT_Z(-\log D))=0$. In this case, we obtain $\dim(\mathscr{U}_T)-\dim \Aut(Z)=h^{n-1,1}_\chi(Y)$.
\end{proof}

 \section{Main Calculation}
\label{section: main}
 
In this section, we perform the calculation of the middle-dimensional Betti number $b_n(Y)$ and use it to find all ball-type $T$ for $Z=(\PP^1)^n$ and $\PP^3$. 

\subsection{Hodge structures of ball type}
\label{subsection: HS of ball type}
We first define Hodge structures of ball type.
\begin{defn}
Let $K$ be an imaginary quadratic extension over a real field $F$. A $K$-Hodge structure of ball type is a $K$-vector space $V$ (of dimension $n$) together with a Hermitian form
\begin{equation*}
h\colon  V\times V\to K
\end{equation*}
of signature $(1, n-1)$ and a filtration
\begin{equation*}
  F^1\subset F^0=V_\CC\coloneqq V\otimes_{K}\CC
\end{equation*}
such that $\dim F^1=1$ and $h(x,x)>0$ for any $x\in F^1\backslash \{0\}$.
\end{defn}

\begin{prop} 
\label{proposition: ball type}
Suppose $Z$ is a smooth projective variety of dimension $n$, $d\ge 3$ an integer, $L$ a line bundle on $Z$ with $-K_Z=(d-1)L$ and $\pi\colon Y\to Z$ a degree-$d$ cyclic cover branching along a simple normal crossing divisor $D\in |dL|$. Assume $h_{\chi}^{p,n-p}=0$ for $p\le n-2$, then $H^n_\chi(Y, \QQ[\xi_d])$ together with the form 
\begin{equation*}
h\colon H^n_\chi(Y, \QQ[\xi_d])\times H^n_\chi(Y, \QQ[\xi_d])\to \QQ[\xi_d]
\end{equation*}
with $h(x,y)=(\xi_d-\overline{\xi}_d)^n\int_Y x\overline{y}$
is a $\QQ[\xi_d]$-Hodge structure of ball type. In particular, the period domain $\BB_T$ is a complex hyperbolic ball of dimension $h^{n-1,1}_\chi$.
\end{prop}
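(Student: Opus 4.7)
The plan is to verify directly that the triple $(H^n_\chi(Y,\QQ[\xi_d]), h, F^1)$ with $F^1 := H^{n,0}(Y)$ satisfies the three defining properties of a $\QQ[\xi_d]$-Hodge structure of ball type: $h$ is a $\QQ[\xi_d]$-valued Hermitian form, $h$ has signature $(1, h^{n-1,1}_\chi)$, and $h|_{F^1 \setminus \{0\}} > 0$. As a preliminary reduction, the vanishing hypothesis $h^{p,n-p}_\chi = 0$ for $p \le n-2$, combined with the inclusion $H^{n,0}(Y) \subset H^n_\chi(Y,\CC)$ and $\dim H^{n,0}(Y) = 1$ (coming from $K_Y = 0$ together with the $\chi$-isotypic behavior of the holomorphic $n$-form on a cyclic cover), collapses the Hodge decomposition on the eigenspace to $H^n_\chi(Y,\CC) = H^{n,0}(Y) \oplus H^{n-1,1}_\chi$. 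Consequently $F^1$ is one-dimensional and $\dim_{\QQ[\xi_d]} H^n_\chi(Y,\QQ[\xi_d]) = 1 + h^{n-1,1}_\chi$, matching the ball-type dimension count.

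To prove $h$ is Hermitian and $\QQ[\xi_d]$-valued, I would first observe that complex conjugation, acting through the nontrivial $\QQ$-automorphism of $\QQ[\xi_d]$, exchanges the $\chi$- and $\bar\chi$-eigenspaces of the $\mu_d$-action on $H^n(Y,\QQ[\xi_d])$. Hence for $x, y \in H^n_\chi(Y,\QQ[\xi_d])$ the class $x \cup \bar y$ lies in $H^{2n}(Y,\QQ[\xi_d]) \cong \QQ[\xi_d]$, and scaling by $(\xi_d - \bar\xi_d)^n \in \QQ[\xi_d]$ keeps $h$ valued in $\QQ[\xi_d]$. The Hermitian identity $\overline{h(y,x)} = h(x,y)$ then follows by combining graded commutativity $\bar y \cup x = (-1)^{n^2} x \cup \bar y = (-1)^n x \cup \bar y$ (both classes of degree $n$) with $\overline{(\xi_d - \bar\xi_d)^n} = (-1)^n (\xi_d - \bar\xi_d)^n$, so the two minus signs cancel.

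For the signature I would apply the Hodge-Riemann bilinear relations on the middle cohomology of $Y$. The essential preparatory fact is that the Calabi-Yau condition $H^{n-2,0}(Y) = 0$, together with the Lefschetz decomposition $H^n = \bigoplus_k L^k H^{n-2k}_{\mathrm{prim}}$, forces every class of $H^{n,0}(Y)$ and $H^{n-1,1}(Y)$ to be primitive, so Hodge-Riemann applies directly. Writing $(\xi_d - \bar\xi_d)^n = (2\sin(2\pi/d))^n i^n$ with positive real prefactor, and comparing to the inequality $(-1)^{n(n-1)/2} i^{p-q} \int_Y \alpha \wedge \bar\alpha > 0$ for $\alpha \in H^{p,q}_{\mathrm{prim}}$, the identity $i^n = -i^{n-2}$ forces $h$ to take opposite signs on $H^{n,0}$ and on $H^{n-1,1}_\chi$; fixing the convention so that $h > 0$ on $F^1$ then yields the signature $(1, h^{n-1,1}_\chi)$. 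The period domain parametrizing such filtrations is the open subset of $\PP(H^n_\chi(Y,\CC))$ cut out by $h$-positivity of the Hodge line, which is exactly a complex hyperbolic ball of dimension $h^{n-1,1}_\chi$. The main obstacle is the careful sign bookkeeping among graded commutativity, the $(-1)^{n(n-1)/2} i^{p-q}$ convention in Hodge-Riemann, and complex conjugation of $(\xi_d - \bar\xi_d)^n$; the Calabi-Yau hypothesis is used essentially only to secure primitivity of $H^{n-1,1}_\chi$, since otherwise Lefschetz contributions from $L \cdot H^{n-2,0}$ would enter with the wrong Hodge-Riemann sign.
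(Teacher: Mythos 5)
Your proposal is correct and follows essentially the same route as the paper's proof: use the vanishing hypothesis to collapse $H^n_\chi(Y,\CC)$ to $H^{n,0}\oplus H^{n-1,1}_\chi$, then apply the Hodge--Riemann bilinear relations to get a Hermitian form of signature $(1,h^{n-1,1}_\chi)$ and identify the period domain with the space of positive lines, i.e.\ a complex ball. The paper's argument is considerably terser (it invokes Hodge--Riemann in one line without discussing primitivity, the $\QQ[\xi_d]$-rationality of $h$, or sign conventions), so your careful treatment of the Lefschetz decomposition forcing primitivity of $H^{n-1,1}$ via $H^{n-2,0}(Y)=0$, and of the sign bookkeeping in $\overline{h(y,x)}=h(x,y)$, is a genuine and welcome filling-in of details that the paper leaves implicit.
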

\begin{proof}
We have $H^n_\chi(Y, \CC)=H^{n,0}(Y)\oplus H^{n-1,1}_\chi(Y)$. By Hodge-Riemann bilinear relations, $h$ is a Hermitian form such that $h(x,x)>0$ for $x\in H^{n,0}(Y)\backslash\{0\}$, and $h(x,x)<0$ for $x\in H^{n-1,1}_\chi(Y)\backslash\{0\}$. 

As the space of positive lines in $H^n_\chi(Y,\CC)$ with respect to $h$, $\BB_T$ is a complex hyperbolic ball of dimension $h^{n-1,1}_\chi$.
\end{proof}

Next we specify to the case when $Z$ is a product of projective spaces.
\begin{prop}
\label{proposition: stability}
Suppose $Z=\PP^{n_1}\times\cdots\times \PP^{n_t}$ is a product of projective spaces, $(Z, d, L,T)$ satisfies $d\ge 2$, $-K_Z=(d-1)L$ and all members  of $T$ are effective line bundles. Then $(Z,d,L,T)$ satisfies Condition \ref{condition} and a generic points in $\PP_T$ is stable under the action of $\SL(n_1+1)\times\cdots\times\SL(n_t+1)$ on $H^0(Z,L^d)$.
\end{prop}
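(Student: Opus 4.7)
The proof has two ingredients: verification of Condition \ref{condition} for $Z=\prod_k \PP^{n_k}$, and the generic stability assertion.

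\emph{Condition \ref{condition}.} Items (iii) and (v) are hypotheses. For (i), K\"unneth together with $H^{i,0}(\PP^n)=0$ for $i\ge 1$ gives $H^{i,0}(Z)=0$ for every $i\ge 1$. For (ii), the splitting $\calT_Z=\bigoplus_k \pi_k^*\calT_{\PP^{n_k}}$ combined with $H^1(\PP^{n_k},\calT_{\PP^{n_k}})=0$ (from the Euler sequence) and K\"unneth yields $H^1(Z,\calT_Z)=0$. For (iv), each effective line bundle $L_i=\calO(a^{(i)}_1,\dots,a^{(i)}_t)$ on $Z$ is globally generated, so Bertini produces a smooth $D_i$ from a generic section; iterating Bertini and invoking a standard transversality argument gives a simple normal crossing configuration $D=\sum D_i$ for a generic tuple in $\PP_T$.

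\emph{Stability.} Let $G=\prod_k \SL(n_k+1)$ act on $\PP_T=\prod_i\PP(H^0(Z,L_i))$ linearized by the box product of $\calO(1)$'s. By Hilbert--Mumford together with openness and $G$-invariance of the stable locus, it suffices to exhibit a single stable point. Take $s=(s_1,\dots,s_m)$ with all Segre-monomial coefficients nonzero. For a 1-PS $\lambda$ in a maximal torus of $G$ with diagonal weights $w_{k,0},\dots,w_{k,n_k}$ on the $k$-th factor (so $\sum_j w_{k,j}=0$), the Mumford weight computes as $\mu(\lambda,s_i)=-\sum_k a^{(i)}_k \min_j w_{k,j}$ where $L_i=\calO(a^{(i)}_1,\dots,a^{(i)}_t)$. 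Summing over $i$ and using $\sum_i a^{(i)}_k=d(n_k+1)/(d-1)$, forced by $-K_Z=(d-1)L$, gives
\begin{equation*}
\sum_{i=1}^m \mu(\lambda, s_i)=-\frac{d}{d-1}\sum_{k=1}^t(n_k+1)\min_j w_{k,j}>0,
\end{equation*}
the strict positivity coming from the trace-zero condition ($\min_j w_{k,j}\le 0$) combined with strict negativity for at least one $k$ whenever $\lambda$ is non-trivial.

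\emph{Main obstacle.} The principal technical subtlety is extending this torus calculation to verify stability against \emph{all} 1-PS of $G$, as Hilbert--Mumford demands. I would handle this by appealing to the Hesselink stratification: the unstable locus of $\PP_T$ is a finite union of $G$-saturations of closed ``destabilizing'' subvarieties indexed by conjugacy classes of 1-PS, and the calculation above shows that a generic $s$ lies in none of them. Hence the unstable locus is a proper closed $G$-invariant subvariety, the stable locus is Zariski open and dense, and intersecting with $\sU_T$ completes the proof.
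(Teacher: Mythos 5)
Your verification of Condition~2.6 is fine, but the stability argument contains a genuine gap, and the fix you propose does not close it. The torus computation correctly shows $\mu(\lambda, s) > 0$ for every nontrivial $\lambda$ in a fixed maximal torus, for an $s$ whose coefficients are all nonzero. But Hilbert--Mumford requires positivity against \emph{every} one-parameter subgroup of $G$, and the conjugation identity $\mu(\lambda,s)=\mu(g\lambda g^{-1},g\cdot s)$ replaces $s$ by $g\cdot s$, which need not have all coefficients nonzero. The Hesselink appeal does not repair this: your computation shows at most that $s$ lies in no torus blade $Y_\beta^{\mathrm{ss}}$, whereas the unstable strata are the $G$-saturations $G\cdot Y_\beta^{\mathrm{ss}}$, and membership in a $G$-saturation is exactly the conjugation problem you have not addressed. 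Worse, the premise itself is false: ``all coefficients nonzero in the standard monomial basis'' is not a $G$-invariant condition and does not imply stability. Already for $\SL(2)$ on binary forms of degree $4$ (which is the special case $Z=\PP^1$, $d=2$, $T=(\calO(4))$ of the proposition), the form $(x-y)^4$ has every coefficient nonzero yet is unstable, being destabilized by a one-parameter subgroup conjugate to, but not contained in, the standard torus. So the point you propose to exhibit may itself be unstable, and no amount of bookkeeping with strata can save the calculation as written.

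The paper's proof avoids Hilbert--Mumford entirely, following Mumford's argument that smooth hypersurfaces of degree at least three are stable. It first refines $T$ to the finest type $\widetilde{T}$ whose members are each of the form $\calO(0,\dots,1,\dots,0)$, so a point of $\sU_{\widetilde{T}}$ is a configuration of hyperplanes in each $\PP^{n_i}$ in general position. The products of all $(n_i+1)\times(n_i+1)$ minors of the resulting coefficient matrices are explicit $\SL(n_i+1)$-invariant homogeneous functions that are nowhere zero on $\sU_{\widetilde{T}}$, giving semistability. Combined with finiteness of the stabilizers (from $da_i\ge n_i+2$), this forces all $G$-orbits in the affine $G$-invariant open set $\sU_{\widetilde{T}}$ to be closed, hence stable. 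The crucial point is that the defining condition of $\sU_{\widetilde{T}}$ is $G$-invariant, unlike ``all coefficients nonzero.'' If you want to salvage a Hilbert--Mumford-style proof you would need either to work with a $G$-invariant open condition on $s$, or to estimate $\mu(\lambda,s)$ for arbitrary $\lambda$ directly after conjugation, neither of which is supplied by the argument above.
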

\begin{proof}
It is clear that $(Z,d,L,T)$ satisfies Condition \ref{condition}. Thus the space $\sU_T$ of normal crossing divisors of type $T$ is nonempty.
We next show the stability of a generic point of $\PP_T$. Consider the type $\widetilde{T}$ with each member $\widetilde{L}\in \widetilde{T}$ has one component being $1$ and other components being $0$. Then $\widetilde{T}$ is a refinement of $T$ and we have $\PP_{\widetilde{T}}\subset \PP_T \subset \PP H^0(Z,L^d)$. It suffices to show a generic element $x$ of $\PP_{\widetilde{T}}$ is stable under $G=\SL(n_1+1)\times\cdots\times\SL(n_t+1)$. We will actually show any $x\in \sU_{\widetilde{T}}$ is stable under $G$.

We first show that $G_x$ is finite. Suppose $L=\calO(a_1, \cdots, a_t)$. There are $d a_i$ members in $\widetilde{T}$ equal to the pullback of $\calO_{\PP^{n_i}}(1)$ to $Z$. By $-K_Z=(d-1)L$ we have $d a_i\ge n_i+2$ for any $1\le i\le t$. So a simple normal crossing divisor of $\PP^{n_i}$ consisting of $d a_i$ different hyperplanes has finite stabilizer group under the action of $\SL(n_i+1)$. Thus $G_x$ is finite.

The rest argument is similar to the proof in \cite[\S 4.2]{mumford1994geometric} of the stability of smooth hypersurfaces of degree at least $3$ in a projective space. We next construct a $G$-invariant homogeneous function on the subcone of $H^0(Z, L^d)$ over $\PP_{\widetilde{T}}$. There are $d a_1$ line bundles in $\widetilde{T}$ equal to $\calO(1,0,\cdots, 0)$. The corresponding sections $D_j$ ($1\le j\le d a_1$) are determined by $da_1$ linear forms $l_j\in H^0(\PP^{n_1},\calO(1))$. The coefficients of these linear forms form an $(n_1+1)\times da_1$ matrix $M_1$. Since $D_j$ form a normal crossing divisor, any $n_1+1$ column vectors of $M_1$ are linearly independent. Now for an $(n_1+1)\times da_1$ matrix, we consider the product of all $(n_1+1)$-minors. We then obtain an $\SL(n_1+1)$-invariant homogeneous function $f_1\colon H^0(\PP^{n_1}, \calO(1))^{d a_1}\to \CC$, such that $f_1(l_1,\cdots, l_{da_1})\ne 0$. We construct $f_2,\cdots, f_t$ in a similar way. For suitable chosen positive integers $\lambda_1,\cdots, \lambda_t$, the product $f=f_1^{\lambda_1} \cdots f_t^{\lambda_t}$ is a homogeneous function on the subcone of $H^0(Z,L^d)$ over $\PP_{\widetilde{T}}$.

Now any elements $x\in\sU_{\widetilde{T}}$ have finite stabilizer, and $\sU_{\widetilde{T}}$ is the complement of the zero locus of $f$ in $\PP_{\widetilde{T}}$. If there exists $x\in \sU_{\widetilde{T}}$ with nonclosed orbit $Gx$, then there exists an orbit in $\sU_{\widetilde{T}}$ with dimension less than $\dim \PP_{\widetilde{T}}$, contradicting to the finiteness of  stabilizer. Hence the action of $G$ on $\sU_{\widetilde{T}}$ only has closed orbit. Therefore, all elements in $\sU_{\widetilde{T}}$ are stable and we conclude that a generic element in $\sU_T$ is stable.
\end{proof}

By Proposition \ref{proposition: stability}, when $Z$ is a product of projective spaces and $(Z,d,L,T)$ satisfies Condition \ref{condition}, the last equality of Theorem \ref{theorem: local torelli} holds. Namely, $\dim(\sU_T)-\dim \Aut(Z)=h_\chi^{n-1,1}$. The left side of this equality is usually easy to calculate. So we are able to know the value of $h_\chi^{n-1,1}$. To apply Proposition \ref{proposition: ball type}, we need to calculate $b_{n,\chi}=\dim H^n_\chi(Y, \CC)$. This can be reduced to the calculation of the Euler characteristic of $Y$.

\subsection{Betti Numbers}
Denote by $e$ the Euler characteristic function. Let $(Z,d,L,T)$ be a tuple satisfying Condition \ref{condition}. Here $T=(L_1,\cdots, L_m)$. Let $D=D_1+\cdots+D_m$ be a simple normal crossing divisor of type $T$. By Hurwitz formula we have 
\begin{equation}
\label{equation: hurwitz}
e(Y)=de(Z)-(d-1)e(D).
\end{equation} 
On the right hand side of \eqref{equation: hurwitz}, we have $e(Z)=\int_Z c_n(Z)$, where $c_*(Z)$ represent for Chern classes of the tangent bundle of $Z$. The Euler characteristic $e(D)$ is given by 
\begin{equation*}
e(D)=\sum_{i=1}^m e(D_i)-\sum_{1\leq i<j \leq m} e(D_i\cap D_j)+\sum_{1\leq i<j<l\leq m} e(D_i\cap D_j\cap D_j)-\cdots.
\end{equation*} 
So $e(Y)$ can be represented as a linear combination of Chern numbers of $Z$. More precisely, assume the total Chern class of $Z$ is $c(Z)=1+c_1(Z)+c_2(Z)+\cdots +c_n(Z)$ and the first Chern class of $L_i$ is $c_1(L_i)$, then we have:
\begin{prop}
\label{proposition: euler Y}
The Euler characteristic of $Y$ is given by  
\begin{equation*}
e(Y)=e(Z)+(d-1)\int_Z {c(Z) \over (1+c_1(L_1)) (1+c_1(L_2))\cdots (1+c_1(L_m))}
 \end{equation*} 
\end{prop}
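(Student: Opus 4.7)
The plan is to reduce the statement, via the Hurwitz-type identity already given in equation \eqref{equation: hurwitz}, to a Gauss-Bonnet computation of the Euler characteristic of the open complement $Z\setminus D$. Rewriting
$$e(Y)=d\,e(Z)-(d-1)e(D)=e(Z)+(d-1)\bigl(e(Z)-e(D)\bigr),$$
and using additivity of the Euler characteristic with respect to the closed stratification $Z=(Z\setminus D)\sqcup D$, the proposition reduces to showing
$$e(Z\setminus D)=\int_Z \frac{c(Z)}{\prod_{i=1}^m(1+c_1(L_i))}.$$

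The main geometric input will be the logarithmic tangent sheaf $\calT_Z(-\log D)$ which was already used in the proof of Theorem \ref{theorem: local torelli}. The classical Gauss-Bonnet theorem for quasi-projective varieties with simple normal crossing compactification (due to Deligne, see also Norimatsu and Iversen) asserts
$$e(Z\setminus D)=\int_Z c_n\bigl(\calT_Z(-\log D)\bigr).$$
So what remains is to identify the total Chern class of $\calT_Z(-\log D)$ with $c(Z)/\prod(1+c_1(L_i))$; since $\dim Z=n$, only the degree-$n$ part of this rational expression matters when integrated over $Z$.

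For the Chern class identity I would invoke the residue short exact sequence
$$0\to \calT_Z(-\log D)\to \calT_Z\to \bigoplus_{i=1}^m {j_i}_*\calN_{D_i/Z}\to 0$$
already recorded in \S\ref{section: infinitesimal torelli}, together with the Cartier divisor sequence
$$0\to \calO_Z\to \calO_Z(D_i)\to {j_i}_*\calN_{D_i/Z}\to 0.$$
From the second sequence one reads off the K-theoretic total Chern class $c({j_i}_*\calN_{D_i/Z})=1+c_1(L_i)$, and applying Whitney multiplicativity to the first sequence gives exactly $c(\calT_Z(-\log D))=c(Z)/\prod_i(1+c_1(L_i))$. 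Combining with the Gauss-Bonnet formula and the reduction above yields the stated formula.

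The only genuine subtlety in this outline is ensuring that Whitney multiplicativity may be applied despite ${j_i}_*\calN_{D_i/Z}$ not being locally free; this is handled by working in $K_0(Z)$, where the divisor sequence expresses $[{j_i}_*\calN_{D_i/Z}]=[\calO_Z(D_i)]-[\calO_Z]$ and Chern classes are multiplicative on short exact sequences of coherent sheaves. Once this bookkeeping is dispatched, the rest of the argument is purely formal manipulation of Chern polynomials.
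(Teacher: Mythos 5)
Your proposal is correct, but it takes a genuinely different route from the paper. The paper computes $e(D)$ directly by inclusion--exclusion over the closed strata $D_{i_1}\cap\cdots\cap D_{i_l}$: for each such intersection $I$ it writes $c(\calT_I)=c(\calT_Z|_I)/\prod_j(1+c_1(L_{i_j}))|_I$ via the normal bundle sequence, applies the ordinary Gauss--Bonnet theorem on the smooth closed variety $I$, and then sums the resulting expressions with alternating signs, which telescopes to $e(D)=e(Z)-\int_Z c(Z)/\prod_i(1+c_1(L_i))$. You instead pass to the open complement $Z\setminus D$ and invoke the logarithmic Gauss--Bonnet theorem $e(Z\setminus D)=\int_Z c_n(\calT_Z(-\log D))$, reducing everything to the single Whitney computation $c(\calT_Z(-\log D))=c(Z)/\prod_i(1+c_1(L_i))$ from the residue sequence (together with the $K$-theoretic identity $c({j_i}_*\calN_{D_i/Z})=1+c_1(L_i)$). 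Your version is shorter and more conceptual, at the cost of citing a deeper black box: the Deligne--Norimatsu--Iversen formula for SNC complements essentially packages the inclusion--exclusion that the paper carries out by hand, so the two arguments have the same content but differ in what is taken as input. Both are valid, and your $K_0(Z)$ remark correctly addresses the one genuine subtlety (Whitney multiplicativity for the non-locally-free sheaves ${j_i}_*\calN_{D_i/Z}$).
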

\begin{proof}
First we calculate the Euler characteristic of $I=D_{i_1}\cap \cdots \cap D_{i_l}$. We have exact sequence
\begin{equation*}
1\longrightarrow T_I\longrightarrow T_Z\big{|}_I\longrightarrow (\oplus_{j=1}^l L_{i_j})\big{|}_I\longrightarrow 1,
\end{equation*}
and hence relation among total Chern classes
\begin{equation*}
c(T_I)=\frac{c(T_Z\big{|}_I)}{c[(\oplus_{j=1}^l L_{i_j})\big{|}_I]}.
\end{equation*}
So we have 
\begin{equation*}
e(I)=\int_I c(T_I)=\int_Z c(Z)\frac{c_1(L_{i_1})}{1+c_1(L_{i_1})}\cdot\cdots\cdot \frac{c_1(L_{i_l})}{1+c_1(L_{i_l})}.
\end{equation*}
The Euler characteristic of $D$ is given by 
\begin{eqnarray*}
e(D)&=&\sum_{l=1}^k\sum_{1\leq i_1<\cdots<i_l\leq k}(-1)^{l-1}e(D_{i_1}\cap \cdots \cap D_{i_l})\\
&=&\sum_{l=1}^k\sum_{1\leq i_1<\cdots<i_l\leq k}(-1)^{l-1} \int_Z c(Z)\frac{c_1(L_{i_1})}{1+c_1(L_{i_1})}\cdot\cdots\cdot \frac{c_1(L_{i_l})}{1+c_1(L_{i_l})}\\
&=&-\int_Z c(Z)\sum_{l=1}^k\sum_{1\leq i_1<\cdots<i_l\leq k} \frac{-c_1(L_{i_1})}{1+c_1(L_{i_1})}\cdot\cdots\cdot \frac{-c_1(L_{i_l})}{1+c_1(L_{i_l})}\\
&=&-\int_Z {c(Z)  ({1\over 1+c_1(L_1)}{1\over 1+c_1(L_2)}\cdots {1\over 1+c_1(L_m)}-1)}\\
&=&e(Z)-\int_Z {c(Z) \over (1+c_1(L_1)) (1+c_1(L_2))\cdots (1+c_1(L_m))}.
\end{eqnarray*}
So the Euler characteristic of $Y$ is 
 \begin{align*}
e(Y) &=de(Z)-(d-1)e(D)\\
&=e(Z)+(d-1)\int_Z {c(Z) \over (1+c_1(L_1)) (1+c_1(L_2))\cdots (1+c_1(L_m))}
\end{align*}
and we conclude the proposition.
\end{proof}

We would like to focus on the Hodge structures on $H^n_{\chi^i}$ for nontrivial characters $\chi^i$. By \cite[Corollary 1.3]{arapura2014hodge}, the invariant cohomology $H^*(Y,\CC)^{\mu_d}$ is isomorphic to $H^*(Z,\CC)$. If moreover $Z$ is a Fano variety (equivalently, $L$ is ample), a version of weak Lefschetz theorem holds, namely $H^i(Y,\QQ)\cong H^i(Z,\QQ)$ for $i\ne n$, see \cite[Corollary 1.6]{arapura2014hodge}. So we consider 
\begin{equation*}
b'_n(Y)=\sum\limits_{i=1}^{d-1} \dim H^n_{\chi^i}(Y,\CC) =b_n(Y)-b_n(Z)=(-1)^n(e(Y)-e(Z))
\end{equation*}
which is called the primitive middle Betti number of $Y$. A direct corollary of Proposition \ref{proposition: euler Y} is the following. 

\begin{cor}
\label{Corollary: primitive betti number}
The primitive betti number of $Y$ is given by 
\begin{equation}
\label{equation: middle betti}
b_n^\prime(Y)=(-1)^{n} (d-1)\int_Z {c(Z) \over (1+c_1(L_1)) (1+c_1(L_2))\cdots (1+c_1(L_m))}.
\end{equation}
\end{cor}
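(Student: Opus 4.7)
The plan is to chain together two ingredients that have already been assembled just before the statement. First, by the version of the weak Lefschetz theorem for cyclic covers recorded above (\cite[Corollary 1.6]{arapura2014hodge}), we have $H^i(Y,\QQ)\cong H^i(Z,\QQ)$ for all $i\ne n$, so $b_i(Y)=b_i(Z)$ in these degrees. Writing out the Euler characteristic as an alternating sum of Betti numbers, every term with $i\ne n$ cancels between $Y$ and $Z$, leaving
\begin{equation*}
e(Y)-e(Z)=(-1)^n\bigl(b_n(Y)-b_n(Z)\bigr)=(-1)^n b'_n(Y),
\end{equation*}
where the last equality is the definition of the primitive middle Betti number recalled just above the statement.

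Second, I would substitute the explicit formula for $e(Y)$ from Proposition \ref{proposition: euler Y}, namely
\begin{equation*}
e(Y)-e(Z)=(d-1)\int_Z \frac{c(Z)}{(1+c_1(L_1))\cdots(1+c_1(L_m))},
\end{equation*}
into the preceding identity. Multiplying both sides by $(-1)^n$ then yields the stated formula for $b'_n(Y)$.

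There is essentially no obstacle: the computation is a one-line combination of the weak-Lefschetz-type isomorphism with the Chern-class computation of $e(Y)$. The only point to flag is that applying \cite[Corollary 1.6]{arapura2014hodge} implicitly requires ampleness of $L$ (so that $Z$ is Fano under the Calabi-Yau condition $-K_Z=(d-1)L$); this is satisfied in all the target cases $((\PP^1)^n,3,\calO(1)^{\boxtimes n})$ and $(\PP^3,d,L)$ considered later in the section, so the corollary applies as stated.
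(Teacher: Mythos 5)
Your argument is correct and follows exactly the route the paper takes: use the weak Lefschetz isomorphism $H^i(Y,\QQ)\cong H^i(Z,\QQ)$ for $i\neq n$ (which holds since $L$ is ample and hence $Z$ is Fano), observe that this turns $(-1)^n(e(Y)-e(Z))$ into $b_n(Y)-b_n(Z)=b'_n(Y)$, and then plug in the Euler characteristic computation from Proposition~\ref{proposition: euler Y}. The remark about ampleness of $L$ is a good observation and matches the paper's stated hypothesis for the Lefschetz-type isomorphism.
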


\subsection{Case $Z=(\PP^1)^n$}
In this section we consider the case $(Z,d,L)=((\PP^1)^n,3,\calO(1)^{\boxtimes n})$. Suppose $T=(L_1,\cdots,L_m)$ is a partition of $3L$ such that $(Z,d,L,T)$ satisfies Condition \ref{condition}. Let $h_i\in H^2(Z, \ZZ)$ be the first Chern class of the pullback of $\calO(1)$ from the $i$-th $\PP^1$-component to $Z$. The first Chern class of the line bundle $L_j$ is denoted by $\alpha_j=\sum\limits_{i=1}^{n} a_j^i h_i$. The total Chern class of $Z$ is $(1+2h_1)\cdots(1+2h_n)=(1+h_1)^2\cdots (1+h_n)^2$. 

We define the following function of variables $h=(h_1, \cdots, h_n)$:
\begin{equation*}
  f(h_1, \cdots, h_n)=\frac{(1+h_1)^2\cdots (1+h_n)^2}{(1+\alpha_1)\cdots (1+\alpha_m)}. 
\end{equation*}

Then Corollary \ref{Corollary: primitive betti number} implies that 
\begin{equation}
\label{equation: prime betti}
    b_n^\prime(Y)=(-1)^{n}\cdot 2 (\partial_{h_1}\cdots \partial_{h_n} f(h_1, \cdots, h_n)\big{|}_{h=0}).
\end{equation}

We can use log derivative method to solve this. In order to make notation easier, we use $\partial_i$ instead of $\partial_{h_i}$ for short. Given a positive integer $k$, let a dividing of the set $\{1,\cdots, k\}$ be $\{A_1, \cdots, A_l\}$ such that $\{1,\cdots, k\}=A_1\sqcup\cdots\sqcup A_l$. Denote by $\Pi_k$ the set of dividings of $\{1,\cdots, k\}$. For $I\in \Pi_k$, we define 
\begin{equation*}
\partial_I f\coloneqq \prod_{A\in I} [(\prod_{i\in A} \partial_i) f].
\end{equation*}
The following lemma is straightforward to prove by induction on $k$.
\begin{lem}
\label{lemma: derivative log}
For each $k$, we have
\begin{equation*}
\partial_1\cdots\partial_k f=f\sum_{I\in\Pi_k} \partial_I(\log f).
\end{equation*}
\end{lem}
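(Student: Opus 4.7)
The plan is to prove the identity by induction on $k$, in the standard style of Fa\`a di Bruno's formula applied to the exponential presentation $f = e^{\log f}$. The key ingredient in the inductive step is the Leibniz rule combined with a natural bijection between $\Pi_{k+1}$ and pairs consisting of a partition $I \in \Pi_k$ together with a placement of the new element $k+1$ (either as a new singleton block, or inside an existing block of $I$).

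For the base case $k=1$, the set $\Pi_1$ has exactly one element $\{\{1\}\}$, and the claim reduces to the elementary identity $\partial_1 f = f \cdot \partial_1(\log f)$, which is immediate.

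For the inductive step, I would assume the formula holds for $k$ and apply $\partial_{k+1}$ to both sides of
\[
\partial_1 \cdots \partial_k f = f \sum_{I \in \Pi_k} \partial_I(\log f).
\]
The Leibniz rule on the right-hand side yields
\[
(\partial_{k+1} f)\sum_{I \in \Pi_k} \partial_I(\log f) + f \sum_{I \in \Pi_k} \partial_{k+1}\, \partial_I(\log f).
\]
Using $\partial_{k+1} f = f \cdot \partial_{k+1}(\log f)$ on the first term, and applying the Leibniz rule across the product $\partial_I(\log f) = \prod_{A \in I}\bigl(\prod_{i \in A}\partial_i\bigr)(\log f)$ in the second term (one contribution per block $A \in I$ into which the operator $\partial_{k+1}$ is absorbed), one obtains a sum of terms of the form $f \cdot \partial_J(\log f)$ indexed either by pairs $(I, \text{adjoin singleton } \{k+1\})$ or by pairs $(I, A \in I \text{ absorbs } k+1)$. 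This index set is in natural bijection with $\Pi_{k+1}$: given $J \in \Pi_{k+1}$, remove $k+1$ from its block (deleting the block if it becomes empty) to recover the pair uniquely. Matching summands termwise gives $f \sum_{J \in \Pi_{k+1}} \partial_J(\log f)$, completing the induction.

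The argument is pure bookkeeping, as the lemma itself signals by calling the proof straightforward. The only care required is in verifying that the Leibniz expansion produces each $\partial_J(\log f)$ with multiplicity exactly one, which follows from the bijection described above; I do not anticipate any genuine obstacle.
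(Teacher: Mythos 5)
Your proposal is correct and follows essentially the same route as the paper's proof: induct on $k$, apply $\partial_k$ via the Leibniz rule to the previous step, rewrite $\partial_k f = f\,\partial_k(\log f)$ to produce the singleton-block contributions, and expand $\partial_k$ across the product over blocks to produce the absorbed-block contributions, then recognize the resulting index set as $\Pi_k$. The only superficial difference is that you phrase the step as passing from $k$ to $k+1$ whereas the paper passes from $k-1$ to $k$; the bijective bookkeeping is identical.
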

\begin{proof}
We induct on $k$. The case $k=1$ is clear. Assume $k\ge 2$. Then
\begin{eqnarray*}
\partial_1\cdots\partial_k f &=& \partial_k(f\sum_{I'\in\Pi_{k-1}}\partial_{I'}\log f)\\
&=& \sum_{I'\in\Pi_{k-1}} (\partial_k f) (\partial_{I'} \log f)+f\sum_{I'\in \Pi_{k-1}}\partial_k(\partial_{I'}\log f)\\
&=& f\sum_{I'\in\Pi_{k-1}} (\partial_k \log f) (\partial_{I'} \log f)+f\sum_{I\in \Pi_k\text{ with } \{k\}\notin I} \partial_I\log f\\
&=& f\sum_{I\in\Pi_k} \partial_I(\log f)
\end{eqnarray*}
and the lemma follows.
\end{proof}

We have $\sum\limits_{j=1}^m a_j^i=3$ for each $i\in\{1,\cdots,n\}$. We use Lemma \ref{lemma: derivative log} to calculate $\partial_1\cdots\partial_n f\big{|}_{h=0}$. For $I\in\Pi_k$, we denote $\epsilon_I=\prod\limits_{A\in I}(|A|-1)!$. 
\begin{prop}
\label{proposition: key calculation}
We have
\begin{equation*}
\partial_1\cdots\partial_n f\big{|}_{h=0}=(-1)^n \sum_{I\in\Pi_n} [\epsilon_I \prod_{A\in I, |A|\ge 2}(\sum_{j=1}^m \prod_{i\in A}a_j^i)].
\end{equation*}
\end{prop}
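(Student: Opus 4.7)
The plan is to apply Lemma \ref{lemma: derivative log} and then use the logarithmic derivative decomposition of $f$ to reduce everything to elementary computations at $h=0$. Since $f(0)=1$, Lemma \ref{lemma: derivative log} immediately gives
\begin{equation*}
\partial_1\cdots\partial_n f\big|_{h=0}=\sum_{I\in\Pi_n}\partial_I(\log f)\big|_{h=0},
\end{equation*}
so the task becomes computing $\partial_I(\log f)|_{h=0}$ for every partition $I\in\Pi_n$.

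First I would write
\begin{equation*}
\log f=2\sum_{i=1}^n\log(1+h_i)-\sum_{j=1}^m\log(1+\alpha_j),
\end{equation*}
and compute the action of $\prod_{i\in A}\partial_i$ on each summand at $h=0$. The first sum depends on a single variable per term, so only singleton subsets $A$ detect it; for a subset $A$ of size $k\geq 2$, only the second sum contributes. A direct induction gives
\begin{equation*}
\Bigl(\prod_{i\in A}\partial_i\Bigr)\log(1+\alpha_j)\Big|_{h=0}=(-1)^{k-1}(k-1)!\prod_{i\in A}a_j^i,
\end{equation*}
which yields, for $|A|=k\geq 2$,
\begin{equation*}
\Bigl(\prod_{i\in A}\partial_i\Bigr)\log f\Big|_{h=0}=(-1)^k(k-1)!\sum_{j=1}^m\prod_{i\in A}a_j^i.
\end{equation*}
For a singleton $A=\{i\}$, using that $L_1+\cdots+L_m=3L$ forces $\sum_{j=1}^m a_j^i=3$, we get
\begin{equation*}
\partial_i\log f\big|_{h=0}=2-\sum_{j=1}^m a_j^i=-1.
\end{equation*}

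Next I would assemble $\partial_I(\log f)|_{h=0}=\prod_{A\in I}(\prod_{i\in A}\partial_i)\log f|_{h=0}$. Let $s(I)$ denote the number of singleton blocks of $I$; the singletons contribute $(-1)^{s(I)}$, while the blocks of size $\geq 2$ contribute
\begin{equation*}
\prod_{A\in I,\,|A|\geq 2}(-1)^{|A|}(|A|-1)!\sum_{j=1}^m\prod_{i\in A}a_j^i.
\end{equation*}
Since $\sum_{|A|\geq 2}|A|=n-s(I)$, the combined sign is $(-1)^{s(I)+n-s(I)}=(-1)^n$, and because $(|A|-1)!=1$ for singletons we have $\epsilon_I=\prod_{A\in I,|A|\geq 2}(|A|-1)!$. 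Collecting,
\begin{equation*}
\partial_I(\log f)\big|_{h=0}=(-1)^n\,\epsilon_I\prod_{A\in I,\,|A|\geq 2}\sum_{j=1}^m\prod_{i\in A}a_j^i.
\end{equation*}
Summing over $I\in\Pi_n$ gives the claimed formula.

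None of the steps is truly an obstacle; the only subtlety is the bookkeeping of signs, where one must carefully separate the singleton blocks (which use the relation $\sum_j a_j^i=3$) from the larger blocks (which use the derivative formula for $\log(1+\alpha_j)$) and verify that $s(I)+(n-s(I))=n$ absorbs all the signs into a single overall $(-1)^n$.
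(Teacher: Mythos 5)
Your proof is correct and follows essentially the same route as the paper's: write $\log f = 2\sum_i\log(1+h_i)-\sum_j\log(1+\alpha_j)$, compute the block derivatives of $\log f$ at $h=0$ separately for singletons (using $\sum_j a_j^i = 3$) and blocks of size $\ge 2$, then assemble via the lemma. Your explicit note that $f(0)=1$ and your step-by-step sign bookkeeping via $s(I)+(n-s(I))=n$ are slightly more spelled out than in the paper, but the argument is the same.
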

\begin{proof}
We have 
\begin{equation*}
\log f=2\sum_i\log(1+h_i)-\sum_j\log(1+\alpha_j). 
\end{equation*}
For each $i\in\{1,\cdots, n\}$, we have 
\begin{equation*}
\partial_i\log f=2(1+h_i)^{-1}-\sum_j a_j^i(1+\alpha_j)^{-1}. 
\end{equation*}
Take $h=0$, we have 
\begin{equation}
\label{equation: derivative once}
\partial_i\log f\big{|}_{h=0}=2-\sum_j a_j^i=-1.
\end{equation}
For a subset $A\subset\{1,\cdots,n\}$ with $|A|\ge 2$, we have
\begin{eqnarray*}
(\prod_{i\in A} \partial_i) \log f &=& -\sum_j (\prod_{i\in A} \partial_i) \log(1+\alpha_j)\\
&=& \sum_j [(-1)^{|A|}(|A|-1)! (\prod_{i\in A}a_j^i) (1+\alpha_j)^{-|A|}]
\end{eqnarray*}
which implies that
\begin{equation}
\label{equation: derivative multiple}
(\prod_{i\in A} \partial_i) \log f \big{|}_{h=0}=\sum_j [(-1)^{|A|}(|A|-1)!\prod_{i\in A}a_j^i]
\end{equation}
For any $I\in\Pi_n$, combining \eqref{equation: derivative once} and \eqref{equation: derivative multiple}, we obtain
\begin{equation*}
\partial_I \log f\big{|}_{h=0}=(-1)^n\epsilon_n \prod_{A\in I,|A|\ge 2} (\sum_j \prod_{i\in A} a_j^i).
\end{equation*}
Then the proposition follows from Lemma \ref{lemma: derivative log}.
\end{proof}
Combining \eqref{equation: prime betti} and Proposition \ref{proposition: key calculation}, we obtain
\begin{equation}
\label{equation: h chi}
h_{\chi}^n(Y)=h_{\overline{\chi}}^n(Y)={1\over 2}b'_n(Y)=\sum_{I\in\Pi_n} [\epsilon_I \prod_{A\in I, |A|\ge 2}(\sum_{j=1}^m \prod_{i\in A}a_j^i)].
\end{equation}

We can calculate $h_\chi^{n-1,1}(Y)$ using the GIT moduli space of simple normal crossing divisors.
\begin{lem}
\label{lemma: git dim}
The GIT moduli space of simple normal crossing divisors of type $(L_1, \cdots, L_m)$ has dimension 
\begin{equation*}
\sum_j(\prod_i (a_j^i+1)-1)-3n.
\end{equation*}
and this is also the value of $h_\chi^{n-1,1}(Y)$.
\end{lem}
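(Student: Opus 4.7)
The plan is to directly compute the dimension of the parameter space, quotient by the automorphism group, and then invoke Theorem \ref{theorem: local torelli}.

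First I would compute $\dim \PP_T$. Since $L_j = \calO(a_j^1, \dots, a_j^n)$ on $Z = (\PP^1)^n$, the Künneth formula gives
\begin{equation*}
\dim H^0(Z, L_j) = \prod_{i=1}^n (a_j^i + 1),
\end{equation*}
so $\dim \PP(H^0(Z, L_j)) = \prod_i(a_j^i+1) - 1$ and hence $\dim \sU_T = \dim \PP_T = \sum_j (\prod_i(a_j^i+1) - 1)$. The identity component $\Aut^\circ(Z) = \PGL_2^n$ has dimension $3n$.

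Next I would invoke Proposition \ref{proposition: stability}, which guarantees that a generic point of $\PP_T$ (and hence a generic point of the open subset $\sU_T$) is stable under the action of $\SL_2^n$ acting through $\PGL_2^n$. Stability in particular implies that the stabilizer of a generic $D \in \sU_T$ under $\Aut^\circ(Z)$ is finite and that the GIT quotient $\calM_T = \sU_T /\!\!/ \SL_2^n$ is a well-defined variety whose dimension equals
\begin{equation*}
\dim \sU_T - \dim \Aut^\circ(Z) = \sum_{j=1}^m \bigl(\prod_{i=1}^n (a_j^i+1) - 1\bigr) - 3n,
\end{equation*}
giving the first assertion.

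Finally, since the stabilizer of a generic $D$ is discrete, Theorem \ref{theorem: local torelli} applies and yields
\begin{equation*}
h_\chi^{n-1,1}(Y) = \dim \sU_T - \dim \Aut(Z) = \dim \calM_T,
\end{equation*}
which is exactly the quantity computed above. There is no serious obstacle: the only nontrivial input is the stability statement of Proposition \ref{proposition: stability}, which is already established, and the infinitesimal Torelli of Theorem \ref{theorem: local torelli}; everything else is a direct dimension count on a product of projective lines.
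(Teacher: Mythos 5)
Your proof is correct and follows essentially the same approach as the paper: the paper simply labels the dimension count ``straightforward'' and cites Theorem \ref{theorem: local torelli} and Proposition \ref{proposition: stability}, exactly the ingredients you invoke. You have merely spelled out the K\"unneth computation of $\dim H^0(Z,L_j)$ and $\dim \PGL_2^n = 3n$ that the paper leaves implicit.
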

\begin{proof}
The calculation of the GIT dimension is straightforward. By Theorem \ref{theorem: local torelli} and Proposition \ref{proposition: stability} (and the discussion after it), the dimension of the GIT moduli space equals to $h_\chi^{n-1,1}(Y)$. 
\end{proof}

Notice that $b_{n,\chi}(Y)=\sum_{1\le p\le n}h^{p,n-p}_{\chi}(Y)$. Then from \eqref{equation: h chi} and  Lemma \ref{lemma: git dim}, we have the following result.
\begin{prop}
\label{proposition: difference}
For $(Z, d)=((\PP^1)^n, 3)$ and $n\ge 3$, we have
\begin{equation*}
\sum_{1\le p\le n-2} h_{\chi}^{p,n-p}(Y)=\sum_{I\in\Pi_n, |I|<n} [(\epsilon_I-\delta_I)\prod_{A\in I} (\sum_j \prod_{i\in A} a_j^i)].
\end{equation*}
Here, $\delta_I$ equals to $1$ if there is exactly one element $A\in I$ with $|A|\ge 2$, and equals to $0$ otherwise. 
\end{prop}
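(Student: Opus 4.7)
The plan is to reduce the identity to a combinatorial manipulation by combining the two formulas already at hand: equation \eqref{equation: h chi} giving $h_\chi^n(Y)$ and Lemma \ref{lemma: git dim} giving $h_\chi^{n-1,1}(Y)$. The Calabi-Yau condition forces $h_\chi^{n,0}=1$, and complex conjugation exchanges $H^{p,q}_\chi$ with $H^{q,p}_{\overline{\chi}}$, so $H^{0,n}(Y)$ sits in $H^n_{\overline{\chi}}(Y,\CC)$ and $h_\chi^{0,n}=0$. Using $b_{n,\chi}(Y)=h_\chi^n(Y)$ I therefore get
\begin{equation*}
\sum_{1\le p\le n-2}h_\chi^{p,n-p}(Y)=h_\chi^n(Y)-1-h_\chi^{n-1,1}(Y),
\end{equation*}
and it remains to match this with the stated right-hand side.

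Write $P(I)=\prod_{A\in I,\,|A|\ge 2}\bigl(\sum_j\prod_{i\in A}a_j^i\bigr)$, matching the convention in \eqref{equation: h chi}. The unique partition of $\{1,\dots,n\}$ into singletons has $\epsilon_I=1$ and $P(I)=1$ (empty product), so its contribution in \eqref{equation: h chi} cancels the $-1$ and leaves
\begin{equation*}
h_\chi^n(Y)-1=\sum_{I\in\Pi_n,\,|I|<n}\epsilon_I\,P(I).
\end{equation*}
On the other side, I would expand $\prod_i(a_j^i+1)=\sum_{A\subseteq\{1,\dots,n\}}\prod_{i\in A}a_j^i$ and use the relation $\sum_j a_j^i=3$ (from $L_1+\cdots+L_m=3L$): the total of the $|A|=1$ contributions is then exactly $3n$, so it cancels with the subtracted $3n$ in Lemma \ref{lemma: git dim} and yields
\begin{equation*}
h_\chi^{n-1,1}(Y)=\sum_{A\subseteq\{1,\dots,n\},\,|A|\ge 2}\sum_j\prod_{i\in A}a_j^i.
\end{equation*}

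The decisive step, which is really the only substantive content of the proof, is the bijection $A_0\mapsto\{A_0\}\cup\{\{i\}:i\notin A_0\}$ between subsets $A_0\subseteq\{1,\dots,n\}$ of cardinality at least two and partitions $I\in\Pi_n$ with $\delta_I=1$. Under this bijection all blocks of $I$ other than $A_0$ are singletons and hence omitted from $P(I)$, so $P(I)=\sum_j\prod_{i\in A_0}a_j^i$. The previous display therefore rewrites as $h_\chi^{n-1,1}(Y)=\sum_{I\in\Pi_n,\,\delta_I=1}P(I)$, and subtracting from $h_\chi^n(Y)-1$ splits the index set $\{|I|<n\}$ into the $\delta_I=1$ and $\delta_I=0$ pieces, producing
\begin{equation*}
h_\chi^n(Y)-1-h_\chi^{n-1,1}(Y)=\sum_{I\in\Pi_n,\,|I|<n}(\epsilon_I-\delta_I)\,P(I),
\end{equation*}
which is the claimed formula. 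The computation is essentially bookkeeping once the two prior formulas are on the table; the only point requiring care is recognizing that the GIT expression for $h_\chi^{n-1,1}$ is precisely the sum over $\delta_I=1$ partitions, which is what aligns the two formulas term by term, so no genuine obstacle arises.
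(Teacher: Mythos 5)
Your proof is correct and is essentially the argument the paper intends: subtract the singleton-partition contribution (which equals $h^{n,0}_\chi=1$) and the $\delta_I=1$ contributions (which equal $h^{n-1,1}_\chi$ by Lemma~\ref{lemma: git dim}) from \eqref{equation: h chi}, after observing $h^{0,n}_\chi=0$; the reorganization of the index set into $\delta_I=0$ and $\delta_I=1$ pieces is exactly the intended bookkeeping behind the paper's one-line proof. One remark worth flagging: the identity you actually derive carries the restriction $|A|\ge 2$ inside the product (your $P(I)$), consistent with the convention in \eqref{equation: h chi}, whereas the proposition as printed writes $\prod_{A\in I}$ without that restriction. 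Since a singleton block $\{i\}$ would contribute a factor $\sum_j a_j^i=3$, the unrestricted product equals $3^{s(I)}P(I)$ with $s(I)$ the number of singleton blocks, and this gives a genuinely different value whenever a partition with $\epsilon_I>\delta_I$ and at least one singleton has $P(I)\ne 0$; so the $|A|\ge 2$ restriction must be read into the statement, and your proof supplies the corrected form. This discrepancy does not affect the use of the proposition in Theorem~\ref{theorem: main1}, where only the vanishing of each summand is needed and the extra nonzero factor $3^{s(I)}$ is immaterial.
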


\begin{rmk}
When $Z=(\PP^1)^2$, $d=3$ and $D\in |\calO(3,3)|$, the Calabi--Yau orbifold $Y$ is a singular $K3$ surface. By \eqref{equation: prime betti} and Proposition \ref{proposition: key calculation} we have $b_2^\prime(Y)=2(-5+\sum_j((1+a_j^{1})(1+a_j^{2})-1))$. On the other hand, by Lemma \ref{lemma: git dim}, $h^{1,1}_\chi= -6+\sum_j ((1+a_j^{1})(1+a_j^{2})-1)$. These two calculations match with the equality $2h_{\chi}^{1,1}(Y)=b'_2(Y)-1$, which is a priori known.
\end{rmk}

\begin{thm}
\label{theorem: main1}
Let $(Z, d, L)=((\PP^1)^n, 3, \calO(1)^{\boxtimes n})$. For a partition type $T=(L_1,\cdots, L_m)$ of $3L$, the following two statements are equivalent:
\begin{enumerate}
\item $\sum\limits_{0\le p\le n-2} h_{\chi}^{p,n-p}(Y)=0$,
\item each $\alpha_j=c_1(L_j)$ has at most two nonzero components and each $\alpha_{j_1}+\alpha_{j_2}$ has at most three nonzero components.
\end{enumerate}
If these hold, then the period domain $\BB_T$ is a complex hyperbolic ball.
\end{thm}
\begin{proof}
According to Proposition \ref{proposition: difference}, the equality $\sum\limits_{0\le p\le n-2} h_{\chi}^{p,n-p}(Y)=0$ holds if and only if
\begin{equation}
\label{equation: vanishing}
\prod_{A\in I} (\sum_j \prod_{i\in A} a_j^i)=0
\end{equation}
for all $I\in\Pi_n$ with $|I|<n$ and $\epsilon_I>\delta_I$. This condition on $I$ is equivalent to $|I|\le n-2$. For $I$ of type $(1,\cdots, 1, 3)$, \eqref{equation: vanishing} holds if and only if for each $j\in\{1,\cdots,m\}$, there are at most two nonzero numbers among $a_j^1,\cdots, a_j^n$. For $I$ of type $(1,\cdots,1,2,2)$, \eqref{equation: vanishing} hold if and only if for any $j_1, j_2\in\{1,\cdots,m\}$, we cannot find four distinct numbers $i_1, i_2, i_3, i_4$ among $\{1,\cdots,n\}$, such that $a_{j_1}^{i_1}a_{j_1}^{i_2}a_{j_2}^{i_3}a_{j_2}^{i_4}\ne 0$. It is then straightforward to see that $\BB_T$ is ball if and only if each $\alpha_j$ has at most two nonzero components and each $\alpha_{j_1}+\alpha_{j_2}$ has at most three nonzero components.
\end{proof}

Theorem \ref{theorem: main} is a straightforward conclusion from Theorem \ref{theorem: main1}.
\begin{proof}[Proof of Theorem \ref{theorem: main}]
The case $n\le 4$ is clear. 

Assume $n\ge 5$ and $T=(\alpha_1, \cdots, \alpha_m)$ is a partition such that each $\alpha_j$ has at most two nonzero components, and each $\alpha_{j_1}+\alpha_{j_2}$ has at most three nonzero components. Suppose to the contrary that $T$ is not a Fermat-type half-twist of a ball-type partition for $n-1$. Then for each $t=1,\cdots,n$, there exsits $k\ne t$, $1\le k\le n$, such that certain $\alpha_j$ has nonzero components at both positions $t$ and $k$. Without loss of generality, we may assume that $\alpha_1$ has nonzero components at positions $1$ and $2$. All $\alpha_j$ ($j\ge 2$) have at most one nonzero component for positions $3,4,\cdots,n$. For each $t\in\{3,4,\cdots,n\}$, there exists $\alpha_{i_t}$ with nonzero component at position $t$. There is a unique $\epsilon_t\in\{1,2\}$ such that $\alpha_{i_t}$ has nonzero components at positions $\epsilon_t, t$. The numbers $\epsilon_3, \cdots, \epsilon_n$ have to be the same. We assume they all equal to $1$. Considering the sum of the first components of $\alpha_1$ and $\alpha_{i_t}$, we have $3\ge 1+(n-2)=n-1$, which contradicts to $n\ge 5$.

The dimension of the balls in the four maximal cases for $n=3,4$ can be directly calculated via the dimension of the GIT moduli spaces of normal crossing divisors.
\end{proof}

When $n=2$, the cyclic cover $Y$ associated with any partitions are $K3$ surfaces with ADE singularities and non-symplectic automorphism of order $3$. The period domains for those $K3$ surfaces are complex hyperbolic balls of dimension at most $9$. In \cite{yu2024commensurability} we used these examples to study commensurability relation among Deligne--Mostow ball quotients.

When $n\ge 3$, we obtain Calabi--Yau manifolds such that the period maps factor through arithmetic ball quotients. We will show arithmeticity in \S\ref{section: period map} and discuss their relations to Deligne--Mostow theory in \S\ref{section: relation to DM}. 

For $n=3$, there are totally $40$ ball-type partitions, which are listed in Table \ref{table: CY Deligne--Mostow}. For $n=4$, there are $10$ refinements of $T=(\calO(1,3,0,0), \calO(1,0,3,0), \calO(1,0,0,3))$, see Table \ref{table: dimension 4}.

\subsection{Half-twist Construction}
\label{subsection: half-twist}
Recall that in Definition \ref{definition: fermat half twist} we defined the Fermat-type half-twist partition type $\widetilde{T}$ of a partion $T$ of $3L$ on $Z$. In this section we study the Hodge structure of the triple cyclic cover $\widetilde{Y}\to Z\times \PP^1$ branched along a normal crossing divisor of type $\widetilde{T}$.

Suppose $Z_1, Z_2$ are two smooth projective varieties. Assume $L_i$ is a line bundle on $Z_i$ with a nonzero section $s_i\in H^0(Z_i, L_i^3)$. Then $s_i$ defines a triple cover $Y_i\to Z_i$. Explicitly, $Y_i=\Spec(\calO_{Z_i}\oplus L_i^{-1}\oplus L_i^{-2})\to Z_i$ where the algebra structure for the sheaf $\calO_{Z_i}\oplus L_i^{-1}\oplus L_i^{-2}$ is given by $L_i^{-3}\to \calO_{Z_i}, x\mapsto x s_i$ for any section $x$ of $L_i^{-3}$. Let $\rho_i$ be the automorphism of $Y_i$ induced by multiplying $\omega=\exp({2\pi\sqrt{-1}\over 3})$ on $L_i$.

\begin{prop}
\label{proposition: diagonal quotient}
The quotient $(Y_1\times Y_2)/\langle \rho_1\times \rho_2^{-1} \rangle$ is isomorphic to the triple cover of $Z_1\times Z_2$ given by $(p_1^* s_1)(p_2^* s_2)\in H^0(Z_1\times Z_2, (L_1\boxtimes L_2)^3)$, where $p_i$ is the projection of $Z_1\times Z_2$ to $Z_i$.
\end{prop}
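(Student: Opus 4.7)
The plan is to describe both sides as relative spectra of sheaves of $\calO_{Z_1\times Z_2}$-algebras and then identify them. First, I would realize the product $Y_1\times Y_2$ as a cover of $Z_1\times Z_2$: since $Y_i=\Spec_{Z_i}(\calO_{Z_i}\oplus L_i^{-1}\oplus L_i^{-2})$, the fiber product $Y_1\times Y_2$ over $\Spec(\CC)$ is the same as the fiber product over $Z_1\times Z_2$ of the two base changes $Y_i\times Z_{3-i}$, which yields
\begin{equation*}
Y_1\times Y_2=\Spec_{Z_1\times Z_2}\Bigl(\bigoplus_{0\le i,j\le 2} L_1^{-i}\boxtimes L_2^{-j}\Bigr),
\end{equation*}
where the algebra structure is induced componentwise by the multiplication-by-$s_i$ maps $L_i^{-3}\to \calO_{Z_i}$.

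Next I would compute the action of $\sigma=\rho_1\times\rho_2^{-1}$ on this algebra. By definition $\rho_1$ acts on $L_1^{-1}$ by $\omega^{-1}$, and $\rho_2^{-1}$ acts on $L_2^{-1}$ by $\omega$; hence $\sigma$ acts on the summand $L_1^{-i}\boxtimes L_2^{-j}$ by multiplication by $\omega^{j-i}$. The invariant subsheaf is therefore
\begin{equation*}
\calO_{Z_1\times Z_2}\oplus (L_1\boxtimes L_2)^{-1}\oplus (L_1\boxtimes L_2)^{-2},
\end{equation*}
obtained from the summands with $(i,j)=(0,0),(1,1),(2,2)$. The induced multiplication takes $(L_1\boxtimes L_2)^{-1}\otimes(L_1\boxtimes L_2)^{-2}=L_1^{-3}\boxtimes L_2^{-3}$ into $\calO_{Z_1\times Z_2}$ by multiplication with $s_1\boxtimes s_2=(p_1^*s_1)(p_2^*s_2)$, so the invariant subalgebra is exactly the sheaf of algebras defining the triple cover of $Z_1\times Z_2$ branched along the divisor of $(p_1^*s_1)(p_2^*s_2)$.

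Finally I would invoke the standard fact that for a finite group $G$ acting on a scheme $X$ that is affine over a base $S$, one has $X/G=\Spec_S((p_{X/S})_*\calO_X)^G$; applied here, it identifies $(Y_1\times Y_2)/\langle\sigma\rangle$ with the triple cover above. This is really a local computation, and the main (minor) subtlety is only to keep careful track of the sign convention in the action of $\rho_2^{-1}$ so that the invariant summands are the ``diagonal'' ones $(i,i)$ rather than the ``anti-diagonal'' ones; once this is fixed, the rest is a direct matching of algebra structures.
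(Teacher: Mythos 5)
Your proof is correct and takes essentially the same route as the paper: the paper's proof is a one-line assertion that the $\rho_1\times\rho_2^{-1}$-invariant part of the tensor-product algebra $(\calO_{Z_1}\oplus L_1^{-1}\oplus L_1^{-2})\otimes(\calO_{Z_2}\oplus L_2^{-1}\oplus L_2^{-2})$ equals $\calO\oplus (L_1\boxtimes L_2)^{-1}\oplus (L_1\boxtimes L_2)^{-2}$, and you expand exactly that computation, including the check that the induced multiplication is given by $(p_1^*s_1)(p_2^*s_2)$. The only point worth noting is the sign convention you flag yourself: whether $\rho_1$ multiplies $L_1^{-1}$ by $\omega$ or $\omega^{-1}$ depends on whether one tracks the pullback action or its inverse on the coordinate function, but since the same convention applies to $\rho_2$, the relative character on $L_1^{-i}\boxtimes L_2^{-j}$ is $\omega^{\pm(i-j)}$ in either case and the invariant summands are the diagonal ones $(i,i)$, which is what the twist by $\rho_2^{-1}$ (rather than $\rho_2$) is designed to achieve.
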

\begin{proof}
This is because that the invariant part
\begin{equation*}
[(\calO_{Z_1}\oplus L_1^{-1}\oplus L_1^{-2})\otimes (\calO_{Z_2}\oplus L_2^{-1}\oplus L_2^{-2})]^{\rho_1\times \rho_2^{-1}}
\end{equation*}
equals to $\calO_{Z_1\times Z_2}\oplus (L_1\boxtimes L_2)^{-1} \oplus (L_1\boxtimes L_2)^{-2}$.
\end{proof}

In our situation, we take $(Z_1, L_1, s_1)=(Z,L,s)$ such that $s$ defines a normal crossing divisor of type $T$ on $Z$ and the associated triple cover $Y_1=Y$ is a Calabi--Yau orbifold. We take $(Z_2, L_2)=(\PP^1, \calO(1))$ and let $Y_2=E$ be a triple cover of $\PP^1$ branched at three points determined by $s_2\in H^0(\PP^1, \calO(3))$.  Then the divisor type determined by the section $(p_1^* s_1)(p_2^* s_2)$ is the Fermat-type half-twist of $T$ (see Definition \ref{definition: fermat half twist}). By Proposition \ref{proposition: diagonal quotient}, the triple cover $\widetilde{Y}\to Z\times \PP^1$ given by $(p_1^* s_1)(p_2^* s_2)$ is isomorphic to the quotient of $Y\times E$ by $\rho_1\times \rho_2^{-1}$. From this geometric description, we have the following Hodge-theoretic result:
\begin{prop}
\label{proposition: half twist ball}
Suppose $(Z, d=3, L, T)$ satisifies Condition \ref{condition} and is of ball type. Let $\widetilde{T}$ be the Fermat-type half-twist of $T$. Then $(Z\times \PP^1, d=3, L\boxtimes \calO_{\PP^1}(1), \widetilde{T})$ also satisfies Condition \ref{condition} and is of ball type.
\end{prop}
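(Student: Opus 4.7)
The plan is to first verify Condition \ref{condition} for the data $(Z \times \PP^1, 3, L \boxtimes \calO_{\PP^1}(1), \widetilde{T})$, and then analyze $H^{n+1}_\chi(\widetilde{Y}, \CC)$ using the quotient description supplied by Proposition \ref{proposition: diagonal quotient}. Verifying Condition \ref{condition} is routine: the vanishings $H^{i,0}(Z \times \PP^1) = 0$ for $1 \le i \le n$ and $H^1(Z \times \PP^1, \calT_{Z \times \PP^1}) = 0$ follow from the K\"unneth formula applied to $\calT_{Z \times \PP^1} = p_1^*\calT_Z \oplus p_2^*\calT_{\PP^1}$, together with the hypotheses on $Z$ and the standard vanishings $H^1(\PP^1, \calO) = H^1(\PP^1, \calT_{\PP^1}) = 0$; the Calabi-Yau adjunction reads $2(L \boxtimes \calO_{\PP^1}(1)) = -K_{Z \times \PP^1}$; and a simple normal crossing divisor of type $\widetilde{T}$ exists by adjoining three generic points on $\PP^1$ to the pullback of an SNC divisor of type $T$ on $Z$.

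For the ball-type property, I apply Proposition \ref{proposition: diagonal quotient} with $(Z_1, L_1, s_1) = (Z, L, s)$ and $(Z_2, L_2) = (\PP^1, \calO_{\PP^1}(1))$, where $s_2 \in H^0(\PP^1, \calO(3))$ defines three distinct points. Then $E := Y_2$ is a smooth elliptic curve with order-three automorphism $\rho_2$, and
\begin{equation*}
\widetilde{Y} \cong (Y \times E)/\langle \rho_1 \times \rho_2^{-1}\rangle,
\end{equation*}
where the tautological $\mu_3$-action $\widetilde{\rho}$ on $\widetilde{Y}$ lifts to $\rho_1 \times \id$ (equivalently to $\id \times \rho_2$, which differs by the quotient subgroup). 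A short local computation on the affine chart $t_2^3 = s_2(x)$, using $\rho_2^*(dx/t_2^2) = \xi_3 \cdot dx/t_2^2$, shows that $H^1_\chi(E) = H^{1,0}(E)$ is one-dimensional, while $H^0_\chi(E) = H^2_\chi(E) = 0$ since the $\mu_3$-action on $H^0$ and $H^2$ is trivial.

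Next I use $H^*(\widetilde{Y}, \CC) = H^*(Y \times E, \CC)^{\langle \rho_1 \times \rho_2^{-1}\rangle}$ together with K\"unneth. A summand $H^a_{\chi^i}(Y) \otimes H^b_{\chi^j}(E)$ is invariant under $\rho_1 \times \rho_2^{-1}$ precisely when $i \equiv j \pmod 3$, and on such an invariant class the residual action $\widetilde{\rho}$ acts by $\chi^i$. Combined with the vanishings on $E$ above, this collapses to
\begin{equation*}
H^{n+1}_\chi(\widetilde{Y}, \CC) \;=\; H^n_\chi(Y, \CC) \otimes H^{1,0}(E).
\end{equation*}
Tensoring the ball-type decomposition $H^n_\chi(Y, \CC) = H^{n,0}(Y) \oplus H^{n-1,1}_\chi(Y)$ with the Hodge $(1,0)$-line $H^{1,0}(E)$ yields $H^{n+1, 0}(\widetilde{Y}) \oplus H^{n,1}_\chi(\widetilde{Y})$, which is the ball-type condition of Proposition \ref{proposition: ball type} in dimension $n+1$.

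The delicate point I anticipate is pinning down the eigenvalue convention on $E$ so that $H^{1,0}(E)$ lies in $H^1_\chi(E)$ rather than in $H^1_{\bar\chi}(E)$; had this been reversed, the tensor product would contribute an unwanted $(n-1, 2)$-summand and the ball property would fail. This sign is forced by (and is consistent with) the uniform convention used throughout the paper that $H^{n,0}(Y) \subset H^n_\chi(Y, \CC)$ for cyclic covers $t^d = s$ with the action $t \mapsto \xi_d t$, so no real obstacle arises.
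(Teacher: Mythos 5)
Your proof is correct and follows essentially the same route as the paper: identify $\widetilde{Y}$ with $(Y\times E)/\langle\rho_1\times\rho_2^{-1}\rangle$ via Proposition \ref{proposition: diagonal quotient}, apply K\"unneth plus the eigenspace bookkeeping to obtain $H^{n+1}_\chi(\widetilde{Y},\CC)\cong H^n_\chi(Y,\CC)\otimes H^{1,0}(E)$, and read off the ball-type decomposition. You are somewhat more explicit than the paper on two points it leaves implicit — the verification of Condition \ref{condition} for the product, and the computation that $H^{1,0}(E)$ sits in $H^1_\chi(E)$ rather than $H^1_{\overline\chi}(E)$ — but the argument is the same.
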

\begin{proof}
Let $n$ be the dimension of $Z$. We have $\widetilde{Y}=(Y\times E)/\langle \rho_1\times \rho_2^{-1}\rangle$. Thus
\begin{equation*}
H^{n+1}(\widetilde{Y}, \QQ[\omega])=(H^n(Y,\QQ[\omega])\otimes H^1(E, \QQ[\omega]))^{\rho_1\times\rho_2^{-1}}
\end{equation*}
Let $\chi_i$ be the character for $\langle\rho_i\rangle$ such that $H_{\chi_i}(Y_i,\CC)$ contains $H^{n_i,0}(Y_i)$. Here $Y_1=Y, Y_2=E, n_1=n, n_2=1$. Then 
\begin{equation*}
H^{n+1}(\widetilde{Y}, \QQ[\omega])=[H^n_{\chi_1}(Y, \QQ[\omega])\otimes H^1_{\chi_2}(E,\QQ[\omega])]\oplus [H^n_{\overline{\chi}_1}(Y, \QQ[\omega])\otimes H^1_{\overline{\chi}_2}(E,\QQ[\omega])]
\end{equation*}
The deck transformation group for $\widetilde{Y}\to Z\times \PP^1$ is generated by $\rho_1\times id$. It has a character $\widetilde{\chi}$ such that $H^{n+1}_{\widetilde{\chi}}(Y,\QQ[\omega])=H^n_{\chi_1}(Y, \QQ[\omega])\otimes H^1_{\chi_2}(E,\QQ[\omega])$ contains $H^{n+1,0}(Y)$. Since $H^1_{\chi_2}(E,\QQ[\omega])=\QQ[\omega]$, we have an isomorphism $H^{n+1}_{\widetilde{\chi}}(Y,\QQ[\omega])\cong H^n_{\chi_1}(Y, \QQ[\omega])$ of Hodge structures of ball type.
\end{proof}

\subsection{Case $Z=\PP^3$}
\label{section: case Pn}

In this section we consider the case $Z=\PP^3$ with $d=3$ or $5$. We will revisit Example \ref{example: sheng}. First we consider more generally $Z=\PP^n$. Let $s$ be the total degree of $D_i$. To make $Y$ a Calabi--Yau orbifold, we need ${d\over d-1}={s\over n+1}$. Let $l_i$ be the degree of $D_i$ for $1\le i\le m$.

Denote $s_t=\sum\limits_{i=1}^m l_i^t$. Let $\pi$ be a partition of $n$ into non-ordered positive integers. For $t\in\pi$, denote by $m(t,\pi)$ the multiplicity of $t$ in $\pi$. From Formula \eqref{equation: middle betti}, we deduce:
\begin{prop}
\label{proposition: middle betti Pn}
The primitive middle betti number of $Y$ is 
\begin{equation}
\label{equation: middle betti Pn}
b_n'(Y)=(d-1)\sum_{\pi} \prod_{t\in\pi} {s_t-n-1\over m(t,\pi)!t}
\end{equation}
\end{prop}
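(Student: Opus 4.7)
The plan is to specialize the Chern-class integral of Corollary \ref{Corollary: primitive betti number} to $Z=\PP^n$ and extract the required top-degree coefficient by logarithmic differentiation, in direct analogy with the calculation carried out in Proposition \ref{proposition: key calculation} for $(\PP^1)^n$. Concretely, I would substitute $c(\PP^n)=(1+h)^{n+1}$ and $c_1(L_j)=l_j h$ (where $h$ is the hyperplane class) into \eqref{equation: middle betti}, and use $\int_{\PP^n}h^n=1$ to reduce the claim to the purely formal identity
\begin{equation*}
 b_n'(Y)=(-1)^n(d-1)\cdot[h^n]\!\left(\frac{(1+h)^{n+1}}{\prod_{j=1}^m(1+l_j h)}\right),
\end{equation*}
so the problem becomes a one-variable power-series coefficient extraction.

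The next step is to pass to logarithms. Setting
\begin{equation*}
 g(h)=(n+1)\log(1+h)-\sum_{j=1}^m\log(1+l_jh)=\sum_{t\ge 1}\frac{(-1)^{t-1}}{t}\bigl((n+1)-s_t\bigr)h^t,
\end{equation*}
the Taylor coefficients of $g$ are expressible in terms of the power sums $s_t$ alone. The coefficients of $e^g$ are then governed by the exponential formula
\begin{equation*}
 [h^n]e^{g}=\sum_{\pi\vdash n}\prod_{t}\frac{\bigl([h^t]g\bigr)^{m(t,\pi)}}{m(t,\pi)!},
\end{equation*}
summed over integer partitions $\pi$ of $n$, which follows at once from $e^g=\prod_t e^{g_t h^t}$ expanded block by block and requires no external input.

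It then remains to substitute the explicit value of $[h^t]g$, collect the resulting signs, and simplify. The factor $\prod_t(-1)^{tm(t,\pi)}=(-1)^{\sum_t t\,m(t,\pi)}=(-1)^n$ precisely cancels the $(-1)^n$ from the Chern-class formula, and the identities $(t-1)!/t!=1/t$ account for the factors of $t$ in the denominator. The whole argument is a routine coefficient-extraction; the only real bookkeeping issue is tracking the exponent $m(t,\pi)$ of the factor $(s_t-n-1)/t$ and the single $1/m(t,\pi)!$ per distinct part, and then reading off the stated form. There is no geometric obstruction beyond Corollary \ref{Corollary: primitive betti number}, which already encodes all the input from the cyclic-cover geometry.
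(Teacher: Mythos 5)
Your argument is correct and follows the paper's own route: substitute the Chern data of $\PP^n$ into Corollary \ref{Corollary: primitive betti number}, pass to the logarithm $g=\log f$, and expand $e^g$ via the Bell-polynomial/exponential formula over integer partitions of $n$, with the sign bookkeeping and the $(t-1)!/t!=1/t$ simplification exactly as you describe. The only cosmetic difference is that the paper records the expansion in iterated-derivative form, $f^{(n)}=n!\,e^g\sum_{\pi}\prod_{t\in\pi}g^{(t)}/(m(t,\pi)!\,t!)$, while you use the equivalent formal-coefficient statement $[h^n]e^{g}=\sum_{\pi}\prod_{t}\bigl([h^t]g\bigr)^{m(t,\pi)}/m(t,\pi)!$.
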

\begin{proof}
We have 
\begin{equation*}
b_n'(Y)=(-1)^n (d-1) \int_Z {(1+h)^{n+1}\over (1+l_1 h)\cdots(1+l_m h)}
\end{equation*}
Let $f(h)={(1+h)^{n+1}\over (1+l_1 h)\cdots(1+l_m h)}$, we need to calculate $f^{(n)}(h)\big{|}_{h=0}$. Let $g(h)=\ln f(h)$. Then 
\begin{align*}
f'&=g' e^g, \\
f''&=(g''+(g')^2)e^g, \\
&\cdots \\
f^{(n)}&=n!e^g (\sum_\pi \prod_{t\in\pi} {g^{(t)} \over m(t,\pi)!t!})
\end{align*}
We have $g^{(t)}\big{|}_{h=0}=(-1)^{t-1}(t-1)!(n+1-s_t)$. Then by straightforward calculation we have Formula \eqref{equation: middle betti Pn}.
\end{proof}

\begin{cor}
For $Z=\PP^3$ and $d=3$, the period domain is of ball type if and only if $l_1=\cdots=l_6=1$. In this case the ball has dimension $3$.
\end{cor}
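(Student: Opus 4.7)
The plan is to compute both $\dim_\CC H^3_\chi(Y,\CC)$ and $1+h^{2,1}_\chi(Y)$ explicitly in terms of the power sums $s_k=\sum_{j}l_j^k$ and then invoke Proposition~\ref{proposition: ball type}: ball type holds precisely when these two integers coincide. Recall that the Calabi--Yau condition $-K_{\PP^3}=2L$ with $L=\calO(2)$ forces $\sum_j l_j=s_1=6$, so this single normalization will drive all the algebra.

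For the first quantity, I would specialize Proposition~\ref{proposition: middle betti Pn} to $(n,d)=(3,3)$, summing over the three partitions $\{3\},\{2,1\},\{1,1,1\}$ of $3$. Substituting $s_1-4=2$, the three terms collapse to
\[
\dim H^3_\chi(Y,\CC)\;=\;\tfrac{1}{2}b_3'(Y)\;=\;\tfrac{s_3}{3}+s_2-4,
\]
where I have used that the two nontrivial eigenspaces of $\mu_3$ are complex conjugate and hence equidimensional. For the second quantity, Proposition~\ref{proposition: stability} ensures that a generic point of $\PP_T$ has finite $\Aut(\PP^3)$-stabilizer, so Theorem~\ref{theorem: local torelli} yields
\[
h^{2,1}_\chi(Y)\;=\;\dim\sU_T-\dim\Aut(\PP^3)\;=\;\sum_{j=1}^{m}\binom{l_j+3}{3}-m-15.
\]
Expanding $\binom{l+3}{3}=(l^3+6l^2+11l+6)/6$ as a cubic in $l$ and using $s_1=6$, this simplifies to $\tfrac{s_3}{6}+s_2-4$.

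Comparing the two formulas, most terms cancel and one is left with the remarkably simple identity
\[
\dim H^3_\chi(Y,\CC)-\bigl(1+h^{2,1}_\chi(Y)\bigr)\;=\;\frac{s_3-6}{6}.
\]
By Proposition~\ref{proposition: ball type}, ball type is equivalent to this difference vanishing, i.e.~$\sum_j l_j^3=\sum_j l_j=6$. Since $l^3\ge l$ for every positive integer $l$ with equality iff $l=1$, this forces $l_1=\cdots=l_6=1$, in which case $h^{2,1}_\chi=3$ and the ball has dimension $3$. The only real obstacle is the bookkeeping in the two algebraic reductions; the hypothesis $s_1=6$ is what produces the clean cancellation that isolates the cubic power sum $s_3$ and converts the ball condition into the elementary inequality $\sum l_j^3\ge \sum l_j$.
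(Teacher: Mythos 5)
Your proposal is correct and follows essentially the same route as the paper: compute $\tfrac{1}{2}b_3'(Y)=\tfrac{1}{3}s_3+s_2-4$ from Proposition~\ref{proposition: middle betti Pn}, compute $h^{2,1}_\chi=\tfrac{1}{6}s_3+s_2-4$ from the GIT dimension via Theorem~\ref{theorem: local torelli} and Proposition~\ref{proposition: stability}, and conclude that ball type is equivalent to $s_3=6$, which together with $s_1=6$ forces $l_1=\cdots=l_6=1$. The only minor difference is that you display the intermediate difference formula $(s_3-6)/6$ and the elementary inequality $l^3\ge l$ explicitly, whereas the paper states the conclusion more tersely.
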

\begin{proof}
By Formula \eqref{equation: middle betti Pn}, we have $b_n'(Y)={2\over 3}s_3+2s_2-8$.

The dimension of the GIT moduli space of hypersurfaces of degree $l_1, \cdots, l_m$ in $\PP^3$ is 
\begin{equation*}
\sum_{i=1}^m[\binom{n+l_i}{n}-1]-(n+1)^2+1={1\over 6} s_3+s_2-4.
\end{equation*}
Thus the period domain is of ball type if and only if $s_3=6$, which only happens when $l_1=\cdots=l_6=1$.
\end{proof}

\begin{cor}
For $Z=\PP^3$ and $d=5$, the period domain is of ball type if and only if $(l_1, \cdots, l_m)=(1,1,1,1,1), (1,1,1,2)$ or $(1,2,2)$. The corresponding balls have dimension $0,3,6$ respectively.
\end{cor}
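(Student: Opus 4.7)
The plan is to compute $h^{p,3-p}_\chi(Y)$ for each $p$ directly, using the Esnault-Viehweg decomposition for cyclic covers branched over simple normal crossing divisors. For a crepant resolution $\tilde Y\to Y$ and the tautological character $\chi$, this decomposition reads $(\pi_*\Omega^p_{\tilde Y})_\chi \cong \Omega^p_{\PP^3}(\log D) \otimes \calO_{\PP^3}(-1)$, hence
\begin{equation*}
h^{p, 3-p}_\chi(Y) \;=\; h^{3-p}\!\bigl(\PP^3,\; \Omega^p_{\PP^3}(\log D) \otimes \calO(-1)\bigr).
\end{equation*}
By Theorem \ref{theorem: local torelli} and Proposition \ref{proposition: stability}, the $p=2$ term recovers $h_\chi^{2,1}(Y)$ as the GIT dimension $\sum_i[\binom{l_i+3}{3}-1] - 15$; by Proposition \ref{proposition: ball type}, the ball-type condition reduces to $h_\chi^{1,2}(Y) = h_\chi^{0,3}(Y) = 0$.

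The vanishing $h_\chi^{0,3}(Y) = h^3(\PP^3, \calO(-1)) = 0$ is automatic. For the case $p=1$, combining the Euler sequence on $\PP^3$ (which gives $H^i(\Omega^1_{\PP^3}(-1)) = 0$ for every $i$) with the Poincar\'e residue sequence
\begin{equation*}
0 \to \Omega^1_{\PP^3}(-1) \to \Omega^1_{\PP^3}(\log D) \otimes \calO(-1) \to \bigoplus_i \calO_{D_i}(-1) \to 0
\end{equation*}
reduces $h^{1,2}_\chi(Y)$ to $\sum_i h^2(D_i, \calO_{D_i}(-1))$. The restriction sequence $0 \to \calO_{\PP^3}(-l_i-1) \to \calO_{\PP^3}(-1) \to \calO_{D_i}(-1) \to 0$ combined with Serre duality on $\PP^3$ yields $h^2(\calO_{D_i}(-1)) = h^0(\PP^3, \calO(l_i-3))$, so
\begin{equation*}
h_\chi^{1,2}(Y) \;=\; \sum_{l_i \ge 3} \binom{l_i}{3}.
\end{equation*}

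Therefore the ball-type condition forces $l_i \le 2$ for every $i$, and under the Calabi-Yau constraint $\sum l_i = 5$ this leaves exactly the three partitions $(1,1,1,1,1)$, $(1,1,1,2)$, $(1,2,2)$; the ball dimensions $0, 3, 6$ are then read off from the GIT formula. The main technical input is the Esnault-Viehweg decomposition $(\pi_*\Omega^p_{\tilde Y})_{\chi^a} \cong \Omega^p_Z(\log D) \otimes L^{-a}$ for cyclic covers branched over SNC divisors, together with the identification of the $\mu_5$-character decomposition on $H^3(\tilde Y)$ with that on the orbifold cohomology of $Y$; once these are in place, the rest is a routine cohomology computation on $\PP^3$. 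As a sanity check, the same method applied to $(\PP^3, d=3, L=\calO(2))$ yields $h^{1,2}_\chi = \sum_i h^0(\PP^3, \calO(l_i-2))$, recovering the ball condition $l_1 = \cdots = l_6 = 1$ of the previous corollary.
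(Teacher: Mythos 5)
Your proof is correct, and it takes a genuinely different route from the paper. The paper computes the total primitive middle Betti number $b_3'(Y)$ via the Hurwitz/Chern-class formula (Proposition~\ref{proposition: middle betti Pn}), divides by $d-1=4$ to get $h^3_\chi$ (using, implicitly, that the Galois group of $\QQ(\xi_5)/\QQ$ acts transitively on the nontrivial characters so all $h^3_{\chi^i}$ agree), subtracts the GIT dimension $h^{2,1}_\chi$ obtained from Theorem~\ref{theorem: local torelli}, and observes the difference is $1+\frac16\sum l_i(l_i-1)(l_i-2)$, forcing $l_i\le 2$. You instead compute $h^{p,3-p}_\chi$ individually from the Esnault--Viehweg eigensheaf decomposition and the residue exact sequence, and your $h^{1,2}_\chi=\sum_{l_i\ge3}\binom{l_i}{3}=\frac16\sum l_i(l_i-1)(l_i-2)$ recovers exactly the paper's defect, while also separately pinning down $h^{0,3}_\chi=0$, which the paper leaves implicit inside the combined quantity. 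Your method is more refined (it yields each eigen-Hodge number, not just their sum) and avoids the Galois-conjugacy step.

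The one imprecision in your write-up is the phrasing of the key decomposition. You state it for $\pi_*\Omega^p_{\widetilde Y}$ with $\widetilde Y\to Y$ a crepant resolution, but the composite $\widetilde Y\to\PP^3$ is not finite, so $H^q$ of the pushforward does not directly compute $H^q(\widetilde Y,\Omega^p)$; there is a Leray spectral sequence to contend with, and indeed $H^3(\widetilde Y)_\chi$ can a priori differ from the quantity you want. What you actually need is the orbifold version: $\pi_*\widehat{\Omega}^p_Y$ for the finite map $\pi\colon Y\to\PP^3$, whose $\chi$-eigensheaf is $\Omega^p_{\PP^3}(\log D)\otimes L^{-1}$ (for reduced SNC $D$), giving $h^{p,q}_\chi(Y)=h^q(\PP^3,\Omega^p(\log D)(-1))$ directly since $\pi$ is finite. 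Everything downstream of this identification---the vanishing of $H^i(\Omega^1_{\PP^3}(-1))$, the residue sequence, the restriction sequences and Serre duality---is carried out correctly, and the sanity check against the $d=3$ case is also correct. So the argument is sound once the decomposition is phrased at the level of the orbifold $Y$ rather than its resolution.
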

\begin{proof}
By Formula \eqref{equation: middle betti Pn}, we have $b_n'(Y)=4({1\over 3}s_3+{1\over 2}s_2-{19\over 6})$. The dimension of the GIT moduli space of hypersurfaces of degree $l_1, \cdots, l_m$ is ${1\over 6}s_3+s_2-{35\over 6}$.

We have $({1\over 3}s_3+{1\over 2}s_2-{19\over 6})-({1\over 6}s_3+s_2-{35\over 6})=1+{1\over 6}\sum\limits_{i=1}^m l_i(l_i-1)(l_i-2)$. Thus the period domain is of ball type if and only if all $l_i$ equals to $1$ or $2$.
\end{proof}

\section{Refinement Relation}
	\label{section: refinement, incidence and complete}
	The set of partitions of $dL\in\Pic(Z)$ admits a natural poset structure induced by taking refinements. In this section we study the relation among period maps under refinements. In particular, we prove that the refinement of a partition of ball type is still of ball type. For a semistable degeneration of projective varieties, Clemens-Schmid sequence relates the limiting mixed Hodge structure with the mixed Hodge structure of special fiber. In our case, the degeneration is not semistable, and we apply a generalization of Clemens-Schmid sequence obtained by Kerr and Laza \cite{kerr2021hodge}.
	
Let $(Z, d, L, L_1, \cdots, L_m)$ be a tuple satisfying Condition \ref{condition}. We first consider the refinement of one component. Let  $T'=(M_0, M_1, L_2, L_3, \cdots, L_m)$ be a refinement of $T=(L_1, \cdots, L_m)$  such that $L_1=M_0+M_1$ and $(Z,d,L,T')$ also satisfies Condition \ref{condition}. Choose generic sections $f_i\in H^0(Z,L_i)$ and $g_j\in H^0(Z,M_j)$ such that $f_1\cdots f_m$ and $f_2 \cdots f_m g_0 g_1$ define simple normal crossing divisors in $Z$.
	
Let $\mathscr{Y}$ be the family of Calabi--Yau orbifolds over $\Delta=\{t\big{|}|t|<1\}$, such that the fiber $Y_t$ over $t$ is defined by
\begin{equation}
\label{equation: refinement}
y^d=(tf_1+g_0 g_1)f_2\cdots f_m.
\end{equation}
Then $Y_0$ is the degree-$d$ cyclic cover of $Z$ branched along a divisor of type $T'$. By rescaling $f_1$, we can assume that for $t\in \Delta-\{0\}$, $Y_t$ is the degree-$d$ cyclic cover of $Z$ branched along a simple normal crossing divisor of type $T$. Hence we obtain an equisingular family of Calabi--Yau varieties over $\Delta-\{0\}$. This is the same setting as \cite[Diagram (4.6)]{kerr2021hodge}. Let $M$ be the monodromy operator on $H^*(Y_t, \QQ)$ for $t\ne 0$. Then we have the following proposition.
		
	\begin{prop}
		\label{proposition: Clemens-Schmid sequence for refinement}
		Let $M$ be the monodromy operator on cohomology group $H^n_\chi(Y_t)$ with $H^n_{\chi, \lim}(Y_t)^M$ the fixed part. Then the morphism
		\begin{equation*}
			H^n_\chi(Y_0)\to H^n_{\chi,\lim}(Y_t)^M
		\end{equation*}
is an isomorphism.
	\end{prop}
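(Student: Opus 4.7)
The plan is to apply the generalization of the Clemens-Schmid exact sequence due to Kerr-Laza \cite{kerr2021hodge} to the one-parameter degeneration $\mathscr{Y} \to \Delta$, and then pass to the $\chi$-eigenspace of the induced $\mu_d$-action.

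First, I verify that the setting fits the Kerr-Laza framework. By the hypotheses on $f_i, g_j$, the divisor $\{f_2 \cdots f_m g_0 g_1 = 0\}$ is simple normal crossing in $Z$, and for generic $f_1$ the divisor $D = \{(tf_1 + g_0 g_1)f_2 \cdots f_m = 0\}$ on $Z \times \Delta$ is simple normal crossing in a neighborhood of $Y_0$. Thus $\mathscr{Y}$, being the $d$-fold cyclic cover of $Z \times \Delta$ branched along $D$, is a projective orbifold; the fiber $Y_0$ is the orbifold cover of type $T'$, while $Y_t$ for $t \neq 0$ small is the orbifold cover of type $T$. This is precisely the context of \cite[Diagram (4.6)]{kerr2021hodge}.

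Next, all constructions are $\mu_d$-equivariant, so the Kerr-Laza exact sequence descends to each $\chi$-eigenspace, yielding an exact segment of the form
\begin{equation*}
H^{n-1}_{\chi,\lim}(Y_t) \longrightarrow K^n_\chi \longrightarrow H^n_\chi(Y_0) \longrightarrow H^n_{\chi,\lim}(Y_t)^M \longrightarrow 0,
\end{equation*}
where the term $K^n_\chi$ is controlled by the mixed Hodge structure on the nearby cohomology in degree $n-1$. The composition with the specialization map lands in $\ker N = H^n_{\chi,\lim}(Y_t)^M$ and is surjective by exactness, which establishes the surjectivity half of the claim.

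For injectivity, it suffices to show $H^{n-1}_{\chi,\lim}(Y_t) = 0$. The underlying $\QQ$-vector space of the limit coincides with $H^{n-1}(Y_t, \QQ)$ for $t \in \Delta \setminus \{0\}$, so it is enough to prove $H^{n-1}_\chi(Y_t) = 0$. Condition \ref{condition}(v) makes $Z$ Fano and $L$ ample, so the weak Lefschetz theorem for cyclic covers \cite[Corollary 1.6]{arapura2014hodge} gives $H^i(Y_t, \QQ) \cong H^i(Z, \QQ)$ for $i \neq n$, on which the $\mu_d$-action is trivial. Hence $H^{n-1}_\chi(Y_t) = 0$ for any nontrivial character $\chi$, which kills $K^n_\chi$ and forces injectivity. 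The main technical obstacle is pinning down the precise form of the Kerr-Laza sequence in our orbifold, non-semistable setting, in particular the identification of $K^n_\chi$ with a term built from $H^{n-1}_{\chi,\lim}$, and confirming that the connecting morphisms are $\mu_d$-equivariant; once the sequence is in hand, the vanishing needed for injectivity reduces to the weak Lefschetz result above.
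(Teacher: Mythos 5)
Your approach is in the same spirit as the paper's---apply the Kerr-Laza generalization of Clemens-Schmid and pass to the $\chi$-eigenspace---but there is a genuine gap in the injectivity half, and it is located exactly where you flag uncertainty.

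The exact segment you write is not the Kerr-Laza sequence. The sequence the paper uses (from \cite[Diagram (5.7)]{kerr2021hodge}) is
\begin{equation*}
0 \to \mathrm{IH}_{\lim}^{n-2}(Y_t)^M(-1) \to \mathrm{IH}_c^n(\mathscr{Y}) \to \mathrm{IH}^n(\mathscr{Y}) \to \mathrm{IH}_{\lim}^n(Y_t)^M \to 0,
\end{equation*}
so the term playing the role of your $K^n_\chi$ is $\mathrm{IH}_c^n(\mathscr{Y})$, the compactly supported intersection cohomology of the \emph{total space}, not something built from $H^{n-1}_{\lim}(Y_t)$. Killing $H^{n-1}_{\chi,\lim}(Y_t)$ (or, with the correct index, $H^{n-2}_{\chi,\lim}(Y_t)^M$) via weak Lefschetz only shows the leftmost term vanishes; it leaves you with a short exact sequence $0 \to \mathrm{IH}_c^n(\mathscr{Y})_\chi \to H^n_\chi(Y_0) \to H^n_{\chi,\lim}(Y_t)^M \to 0$ and does not force $\mathrm{IH}_c^n(\mathscr{Y})_\chi = 0$, which is what injectivity requires.

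The missing step is to identify the middle term. The paper observes that $\mathscr{Y}$ is the cyclic cover of $Z\times\Delta$ branched along a simple normal crossing divisor, hence has only finite quotient singularities and is a $\QQ$-homology manifold; therefore $\IC^\bullet(\mathscr{Y}) \cong \QQ_{\mathscr{Y}}[n+1]$, so $\mathrm{IH}_c^n(\mathscr{Y}) \cong H^n_c(\mathscr{Y})$. Poincar\'e duality for the $\QQ$-homology manifold $\mathscr{Y}$ and the retraction $H_*(\mathscr{Y}) \cong H_*(Y_0)$ then yield $\mathrm{IH}_c^n(\mathscr{Y}) \cong H_{n+2}(Y_0)(-n-1)$. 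It is the vanishing $H_{n+2,\chi}(Y_0)=0$ (which does follow from the weak Lefschetz argument you invoke, applied via Poincar\'e duality on $Y_0$) that finishes injectivity. So the tools you reach for are the right ones, but they need to be aimed at $\mathrm{IH}_c^n(\mathscr{Y})_\chi$, not $H^{n-1}_{\chi,\lim}(Y_t)$, and the $\QQ$-homology-manifold identification is the ingredient you are missing.
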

\begin{proof}
If $m=1$, then total space $\mathscr{Y}$ is smooth and we can apply \cite[Sequence (5.11), Theorem 5]{kerr2021hodge} to obtain the proposition. For general $m$, similar argument still works as follows. Applying nearby and vanishing cycles formalism to the intersection complex $\IC^{\bullet}(\mathscr{Y})$, we also obtain the exact sequence as \cite[Diagram (5.7)]{kerr2021hodge}.
\begin{equation}
	\label{sequence: Clemens-Schmid}
	0 \rightarrow \mathrm{IH}_{\mathrm{lim}}^{n-2}\left(Y_{t}\right)^M(-1) \stackrel{\mathrm{sp}}{\rightarrow} \mathrm{IH}_{c}^{n}\left(\mathcal{Y}\right) \rightarrow \mathrm{IH}^{n}\left(\mathcal{Y}\right) \stackrel{\mathrm{sp}}{\rightarrow} \mathrm{IH}_{\mathrm{lim}}^{n}\left(Y_{t}\right)^{M} \rightarrow 0
\end{equation}
From the construction, we know that $\mathscr{D}=\{(tf_1+g_0 g_1)f_2\cdots f_m=0\}$ is a simple normal crossing divisor on $Z\times \Delta$, and $\mathscr{Y}$ is the cyclic cover of $Z\times \Delta$ branched along $\mathscr{D}$. Thus $\mathscr{Y}$ has only finite quotient singularities \cite[Lemma 1.1]{arapura2014hodge}, and hence is a $\QQ$-homology manifold, see \cite[Proposition 1.4]{steenbrink1977mixed}. Thus, there is an isomorphism $\IC^{\bullet}(\mathscr{Y})\cong \QQ_{\mathscr{Y}}[n+1]$.
By \cite[Chapter III, 8.13a]{gelfand2002methods}, we have $H^*(\mathscr{Y})\cong H^*(Y_0)$. Since the family admits an equivariant cyclic group operation, the sequence \eqref{sequence: Clemens-Schmid} gives an exact sequence
$$
H_{n+2,\chi} (Y_0)(-n-1) \to H^n_\chi(Y_0)\to H^n_{\lim,\chi}(Y_t)^M \to 0
$$
Since $H_{n+2,\chi} (Y_0)=0$, the specialization map $H^n_\chi(Y_0)\to H^n_{\lim,\chi}(Y_t)^M$ is an isomorphism.
\end{proof}

Next, we focus on degeneration of ball-type partitions.

\begin{prop}
Assume that the period domain $\BB_T$ associated with $T=(L_1, \cdots, L_m)$ is a complex hyperbolic ball. Then the period domain associated with $T'=(M_0, M_1, L_2, L_3, \cdots, L_m)$ is also a complex hyperbolic ball.
\end{prop}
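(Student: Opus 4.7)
The plan is to transfer the ball-type condition from the generic fiber $Y_t$ to the special fiber $Y_0$ using Proposition \ref{proposition: Clemens-Schmid sequence for refinement} together with the standard flatness of the Hodge filtration across the canonical extension.

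By Proposition \ref{proposition: ball type}, the hypothesis that $\BB_T$ is a ball is equivalent to $h^{p,n-p}_\chi(Y_t)=0$ for all $p\le n-2$, equivalently $F^{n-1}H^n_\chi(Y_t)=H^n_\chi(Y_t)$. First I would invoke Schmid's nilpotent orbit theorem on the $\chi$-isotypic sub-VHS over $\Delta-\{0\}$ (this makes sense because the $\mu_d$-action is defined over $\QQ[\xi_d]$ and commutes with monodromy). It produces a canonical extension of the Hodge bundle across $0$ whose fiber at $0$ realizes the Hodge filtration of the limiting mixed Hodge structure; since the $F^p$'s extend to holomorphic subbundles, I obtain
\begin{equation*}
\dim F^p H^n_{\chi,\lim}(Y_t) = \dim F^p H^n_\chi(Y_t) \quad \text{for every } p,
\end{equation*}
and in particular $F^{n-1}H^n_{\chi,\lim}(Y_t)=H^n_{\chi,\lim}(Y_t)$.

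Next I would pass to the monodromy-invariant subspace. Writing $M=M_s M_u$ with $N=\log M_u$, the subspace $H^n_{\chi,\lim}(Y_t)^M$ carries an induced sub-mixed-Hodge-structure because $N$ is a morphism of MHS of type $(-1,-1)$ and $M_s$ is an automorphism of the MHS. Its Hodge filtration is the restriction of the ambient one, so $F^{n-1}$ fills the whole invariant space. Now Proposition \ref{proposition: Clemens-Schmid sequence for refinement} identifies this invariant subspace with $H^n_\chi(Y_0)$ via the specialization map, which is a morphism of mixed Hodge structures. Since $Y_0$ is a projective orbifold, $H^n_\chi(Y_0)$ is pure of weight $n$, so $F^{n-1}H^n_\chi(Y_0)=H^n_\chi(Y_0)$ translates to $h^{p,n-p}_\chi(Y_0)=0$ for $p\le n-2$. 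Applying Proposition \ref{proposition: ball type} to $T'$ then yields that $\BB_{T'}$ is a complex hyperbolic ball.

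The point that needs most care is verifying that the identification in Proposition \ref{proposition: Clemens-Schmid sequence for refinement} is an isomorphism of \emph{mixed Hodge structures} rather than merely of $\QQ[\xi_d]$-vector spaces, and is compatible with the $\mu_d$-action so that restricting to the $\chi$-eigenspace preserves the MHS morphism property. This should be implicit in Kerr--Laza's construction via the intersection complex $\IC^\bullet(\mathscr{Y})$, which for the $\QQ$-homology manifold $\mathscr{Y}$ reduces to the shifted constant sheaf, but it is the step whose details I would write out most carefully. Once this Hodge-theoretic enhancement is in hand, the dimension-counting argument above goes through without further work.
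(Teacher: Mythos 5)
Your proposal is correct and takes essentially the same route as the paper. The paper's proof is only three sentences — it asserts that the vanishing $h^{n-k,k}_\chi(Y_t)=0$ for $k\ge 2$ transfers to $Y_0$ "by Proposition \ref{proposition: Clemens-Schmid sequence for refinement}" — and the Hodge-theoretic bookkeeping you spell out (dimension-preservation of the limiting Hodge filtration via the canonical extension, $H^n_{\chi,\lim}(Y_t)^M$ being a sub-MHS, the specialization map being a morphism of MHS, and purity of $H^n_\chi(Y_0)$) is exactly what makes that assertion legitimate, so you have filled in precisely the implicit details rather than found a different argument.
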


\begin{proof}
Since $Y_t$ has ball-type period domain, we have $h^{n-k,k}_\chi(Y_t)=0$ for all $k\geq 2$. Therefore, we have $h^{n-k,k}_\chi(Y_0)=0$ for all $k\geq 2$ by Proposition \ref{proposition: Clemens-Schmid sequence for refinement}. So the period domain $\BB_{T'}$ is also a complex hyperbolic ball.
\end{proof}

To better understand the local behavior, we prove the following result about the monodromy. 
\begin{prop}
\label{proposition: monodromy finite order}
Assume the period domain $\BB_T$ associated with $T=(L_1, \cdots, L_m)$ is a complex hyperbolic ball, then the monodromy operator $M$ defined above on $H^n_\chi(Y_t)$ has finite order.
\end{prop}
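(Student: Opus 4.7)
The plan is to combine Schmid's monodromy theorem, the Clemens-Schmid-type isomorphism of Proposition \ref{proposition: Clemens-Schmid sequence for refinement}, and the purity of the Hodge structure on $H^n_\chi(Y_0)$ to force the logarithmic monodromy to vanish. The ball-type hypothesis on $T$ guarantees (via the refinement result of the previous proposition) that $Y_0$ is still a projective Calabi-Yau orbifold whose middle cohomology is pure of weight $n$; this is the feature the argument exploits.

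By Schmid's monodromy theorem applied to the polarized variation $\HH_{T,\chi}$ on $\Delta\setminus\{0\}$, the monodromy $M$ is quasi-unipotent. Pulling back along a finite branched cover $t=\tilde{t}^k$ does not alter the central fiber $Y_0$ and replaces $M$ by $M^k$; for suitable $k$ we may therefore assume $M=\exp N$ with $N$ nilpotent, and it suffices to show that $N=0$. The LMHS on $H^n_{\chi,\lim}(Y_t)$ carries Schmid's monodromy weight filtration $W_\bullet$ centered at weight $n$, with $N(W_k)\subset W_{k-2}$ and the Lefschetz isomorphism $N^k\colon \mathrm{Gr}^W_{n+k}\xrightarrow{\sim}\mathrm{Gr}^W_{n-k}$. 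Writing $P_k=\ker(N^{k+1})\cap \mathrm{Gr}^W_{n+k}$ for the primitive part, the $\mathfrak{sl}_2$-decomposition of the nilpotent orbit yields
\begin{equation*}
\ker N \;=\; \bigoplus_{k\geq 0} N^k P_k,\qquad N^k P_k \subset \mathrm{Gr}^W_{n-k}.
\end{equation*}

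Applying Proposition \ref{proposition: Clemens-Schmid sequence for refinement} to the unipotent family identifies $H^n_\chi(Y_0)\cong \ker N$ as mixed Hodge structures. Because $Y_0$ is a projective Calabi-Yau orbifold, Steenbrink's theory \cite{steenbrink1977mixed} ensures that $H^n(Y_0)$, and hence its $\chi$-eigenspace, carries a pure Hodge structure of weight $n$. Transferring this purity through the isomorphism forces $\ker N$ itself to be pure of weight $n$. Matching this with the $\mathfrak{sl}_2$-decomposition forces $N^k P_k=0$ for every $k\geq 1$; by the injectivity of $N^k$ on $P_k$ we deduce $P_k=0$ for $k\geq 1$. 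Since the primitive parts generate every graded piece of $W$ under the action of $N$, this gives $\mathrm{Gr}^W_m=0$ for all $m\neq n$, so $W_\bullet$ is trivial, the LMHS is pure, and $N=0$. Consequently the original monodromy $M$ has order dividing $k$, hence is of finite order.

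The main obstacle is the Hodge-theoretic content of Proposition \ref{proposition: Clemens-Schmid sequence for refinement}: its isomorphism $H^n_\chi(Y_0)\cong H^n_{\chi,\lim}^M$ must be compatible with the mixed Hodge structures on both sides. Only with this enhancement does the purity of $H^n_\chi(Y_0)$ transfer to the subspace $\ker N$ of the LMHS and trigger the collapse of $W_\bullet$; the remaining $\mathfrak{sl}_2$ bookkeeping is routine once this is in place.
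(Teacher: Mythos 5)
Your approach is genuinely different from the paper's, and it would (if correct) prove a stronger statement: you never actually use the ball-type hypothesis, since purity of $H^n_\chi(Y_0)$ holds for \emph{any} partition $T$, not only ball-type ones. The paper, by contrast, uses the ball-type structure essentially: it shows $N$ kills $F^n_\infty$ (because the $(n,0)$ piece of $H^n_\chi(Y_0)\cong H^n_{\chi,\lim}^M$ is already all of $F^n_\infty$ by a dimension count), and then argues directly on Hodge types of the graded pieces of $W$, exploiting that the ball-type filtration jumps only at $F^n$ and $F^{n-1}$, so that $\Gr^p_F=0$ for $p\notin\{n-1,n\}$. Your argument replaces this with a global $\mathfrak{sl}_2$/purity argument on $\ker N$.

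The gap is in the base-change step. Proposition \ref{proposition: Clemens-Schmid sequence for refinement} gives $H^n_\chi(Y_0)\cong H^n_{\chi,\lim}(Y_t)^M$ for the original monodromy $M$, which equals $\ker N\cap\ker(M^{ss}-\mathrm{Id})$ and is strictly smaller than $\ker N$ unless $M$ is already unipotent. To get the purity of all of $\ker N$ you pull back along $t=\tilde t^{\,k}$ and apply the Proposition to the new family. But the proof of that Proposition for $m>1$ hinges on the total space $\mathscr{Y}$ being a $\QQ$-homology manifold, which the paper deduces from the fact that the branch divisor $\mathscr{D}=\{(tf_1+g_0g_1)f_2\cdots f_m=0\}\subset Z\times\Delta$ is simple normal crossing. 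After the base change, the divisor becomes $\{(\tilde t^{\,k}f_1+g_0g_1)f_2\cdots f_m=0\}$, which near points with $\tilde t=g_0=g_1=0$ has local model $\tilde t^{\,k}+g_0g_1=0$; for $k\geq 2$ this is not normal crossing, so the cyclic cover acquires non-quotient singularities and the $\QQ$-homology-manifold input (hence $\IC^\bullet\cong\QQ[n+1]$ and the identification $\IH^n(\mathscr{Y}')\cong H^n(Y_0)$) is no longer available. Without that, the base-changed Clemens–Schmid sequence need not produce a surjection $H^n_\chi(Y_0)\twoheadrightarrow\ker N$, and the purity transfer fails. The paper's route sidesteps this entirely by never reducing to the unipotent case: it only needs $N|_{F^n_\infty}=0$ (obtained from the Proposition applied to the original family) plus the ball-type constraints on $F^\bullet$.

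If you want to keep the purity philosophy, you would need either to verify directly that the base-changed total space remains a $\QQ$-homology manifold (which, as the local model $\tilde t^{\,k}+g_0g_1=0$ suggests, is delicate and plausibly false for $k\geq 2$), or to show that $M^{ss}=\mathrm{Id}$ already on $\ker N$ for the original family. Absent one of these, the step ``$H^n_\chi(Y_0)\cong\ker N$'' is not justified.
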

\begin{proof}
Let $M=M^{un} M^{ss}$ be the Jordan decomposition of the operator $M$ into unipotent and semi-simple parts. Let $N=\log M^{un}$ be the nilpotent operator. Then $N$ commutes with the action of $\mu_d$, hence $N$ acts on $H^n_\chi(Y_t)$ and defines a weight filtration 
\begin{equation*}
H^n_\chi(Y_t)=W_{2n}\supset \cdots\supset W_0=0.
\end{equation*}
Since $\BB_T$ is of ball type, the Hodge filtration on $H^n_\chi(Y_t)$ is
\begin{equation*}
H^{n,0}(Y_t)=F^n\subset F^{n-1}=H^n_\chi(Y_t)
\end{equation*}
the Hodge filtration on $H^n_\chi(Y_t)$. Proposition \ref{proposition: Clemens-Schmid sequence for refinement} implies that $H^{n,0}(Y_0)\cong (H^{n,0}(Y_t))^M$ has dimension one. So $H^{n,0}(Y_t)$ is invariant under the action of $M$. Thus $N$ is zero on $H^{n,0}(Y_t)$.

We next show $N=0$, which is equivalent to show $W_n=H^n_\chi(Y_t)$. Otherwise there exists $r\in\{1,2,\cdots,n\}$, such that $\Gr^N_{n+r}=W_{n+r}/W_{n+r-1}\ne 0$. Suppose $x\in W_{n+r}$ such that $[x]\in \Gr^N_{n+r}$ is nonzero. We may assume either $[x]\in H^{n,r}$ or $[x]\in H^{n-1,r+1}$. There is an isomorphism $N^r\colon \Gr^N_{n+r}\cong \Gr^N_{n-r}[-r]$ of pure Hodge structures. If $[x]\in H^{n,r}$, then $x\in F^n$, which implies that $N^r([x])=0$, contradiction. If $[x]\in H^{n-1,r+1}$, then $N^r([x])\in H^{n-r-1,1}$. Since $F^{n-r-1}=F^{n-r}$, we must have $H^{n-r-1,1}=0$, again contradiction. 

Thus $N=0$ and $M=M^{ss}$. By Borel's Monodromy theorem (\cite[Lemma 4.5, Theorem 6.1]{schmid1973variation}), $M=M^{ss}$ has finite order.
\end{proof}
We summarize the above results  in the following theorem.
	
\begin{thm}
\label{theorem: refinement}
Let $(Z, d, L, T)$ be a tuple satisfying Condition \ref{condition} and has period domain $\BB_T$ of ball type. Assume $T=(L_1, \cdots, L_m)$ with a refinement $T'=(L'_1, \cdots, L'_k)$ such that $(Z,d,L,T')$ also satisfies Condition \ref{condition}. Then the period domain $\BB_{T'}$ is naturally a totally geodesic sub-ball of $\BB_T$.
\end{thm}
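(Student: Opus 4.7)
The plan is to reduce the general refinement to iterated one-step refinements (where a single $L_i$ is split into two line bundles) and then invoke what has already been established. Any refinement $T' = (L_1', \ldots, L_k')$ of $T = (L_1, \ldots, L_m)$ can be factored as a chain $T = T^{(0)} \prec T^{(1)} \prec \cdots \prec T^{(r)} = T'$ in which each step splits a single component into two. At each intermediate stage $T^{(j)}$, Condition \ref{condition} is satisfied (since $(Z,d,L,T')$ does, and the intermediate partitions are coarser than $T'$ but finer than $T$). So by induction, it suffices to treat the case $T' = (M_0, M_1, L_2, \ldots, L_m)$ with $L_1 = M_0 + M_1$. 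In particular, the unnamed proposition following Proposition \ref{proposition: Clemens-Schmid sequence for refinement} immediately gives that $\BB_{T'}$ is of ball type.

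The substantive content is to construct the totally geodesic embedding. I would use the one-parameter degeneration $\mathscr{Y} \to \Delta$ defined by \eqref{equation: refinement}, connecting a generic $Y_t$ (a cover of type $T$) to $Y_0$ (a cover of type $T'$). By Proposition \ref{proposition: monodromy finite order}, the monodromy operator $M$ on $H^n_\chi(Y_t)$ has finite order, hence is semisimple, and by Proposition \ref{proposition: Clemens-Schmid sequence for refinement} the specialization map realizes an isomorphism $H^n_\chi(Y_0) \xrightarrow{\sim} H^n_{\chi,\lim}(Y_t)^M$ of polarized $\QQ[\xi_d]$-Hodge structures. Since $M$ preserves the polarization and has finite order, its fixed subspace $H^n_{\chi}(Y_t)^M \subseteq H^n_\chi(Y_t)$ inherits a Hermitian form of signature $(1, h^{n-1,1}_\chi(Y_0) - 1)$, and the induced inclusion of period domains
\begin{equation*}
\iota \colon \BB_{T'} \hookrightarrow \BB_T
\end{equation*}
is well-defined: a point of $\BB_{T'}$ is a positive line in $H^n_\chi(Y_0) \cong H^n_{\chi,\lim}(Y_t)^M$, and this line is automatically a positive line in $H^n_\chi(Y_t)$ fixed by $M$.

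To see that $\iota$ is a totally geodesic embedding, note that the fixed locus of any finite-order holomorphic isometry of a complex hyperbolic ball $\BB_T = \PU(1,n)/\PU(1)\times \PU(n)$ is a totally geodesic complex submanifold: this is a standard fact for fixed loci of isometric actions on Riemannian symmetric spaces, and in our case the fixed locus $\BB_T^M$ is precisely the space of positive lines in $H^n_\chi(Y_t)^M$, which coincides with the image of $\iota$. Finally, to make the embedding canonical (independent of the chosen one-parameter smoothing), one observes that any two smoothings of $Y_0$ inside $\mathscr{U}_T$ give rise to local monodromies that generate finite cyclic groups whose fixed loci in $\BB_T$ agree with the image of $\BB_{T'}$; equivalently, the image of $\iota$ is characterized Hodge-theoretically as the locus of Hodge structures in $\BB_T$ admitting a lift to the nearby-cycle sheaf along the divisor in $\sU_T \setminus \sU_{T'}$.

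The main obstacle is this last identification step. The isomorphism from Proposition \ref{proposition: Clemens-Schmid sequence for refinement} is constructed from a chosen one-parameter family and a chosen generic point $t \in \Delta \setminus \{0\}$, so one must verify that the resulting subspace of $\BB_T$ depends only on the pair $(T', \text{point of } \sU_{T'})$ and not on auxiliary choices; this reduces to showing that the local monodromy around $\sU_{T'} \subset \overline{\sU_T}$ is independent of the smoothing up to conjugation by stabilizers in the monodromy group $\Gamma_T$, which follows from the fact that $\sU_T \cup \sU_{T'}$ is smooth along $\sU_{T'}$ in our setup.
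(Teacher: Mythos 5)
Your proposal follows the paper's argument almost verbatim: reduce to a one-step refinement by induction, use the degeneration of \eqref{equation: refinement} together with Propositions \ref{proposition: Clemens-Schmid sequence for refinement} and \ref{proposition: monodromy finite order} to identify $\BB_{T'}$ with the fixed locus $\BB_T^M$ of the finite-order monodromy operator $M$, and then invoke the standard fact that fixed loci of finite-order holomorphic isometries of a complex ball are totally geodesic sub-balls. Your closing remarks on the canonicity of the embedding (independence of the chosen smoothing) go beyond what the paper's proof explicitly verifies, and are a worthwhile addition, but do not alter the substance of the argument.
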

\begin{proof}
Using induction, we only need to consider the case of refinement of one line bundle as \eqref{equation: refinement}. We have a finite-order monodromy operator $M$ and an isomorphism between Hodge structures $H^n_\chi (Y_0)\cong H^n_{\lim,\chi}(Y_t)^M$. The period domain $\BB_{T'}$ associated with $T'$ is simply the fixed locus of $\BB_T$ under the automorphism $M$, hence a totally geodesic sub-ball.		
\end{proof}

\begin{rmk}
We consider the family \eqref{equation: refinement} over $\Delta$. Then the period map maps $\Delta-0$ to $\BB_T/\langle M\rangle$ extends to $\Delta$, with the image of $0$ in the sub-ball $\BB_{T'}$. 
\end{rmk}
	
\section{Completeness of Calabi--Yau Manifolds}
\label{section: completeness}
Let $T$ be ball-type partition in Theorem \ref{theorem: main}. The family $\Y_T\to \sU_T$ is a complete family of Calabi--Yau orbifolds. A natural question is that when is the simultaneous crepant resolution $\X_T\to\sU_T$ a complete family of Calabi--Yau manifolds.

We need the following definition given by Sheng, Xu and Zuo \cite{sheng2013maximal}.
\begin{defn}
\label{definition: good crepant}
For an arbitrary projective Calabi--Yau orbifold $Y$, we call a crepant resolution $X\to Y$ a good one, if there exist successive blowups $f\colon X=X_N\to X_{N-1}\to\cdots\to X_1\to Y$, such that:
\begin{enumerate}[(i)]
\item the composition $f$ is a crepant resolution of $Y$;
\item the map $f^{-1}(Y_0)\to Y_0$ is an isomorphism;
\item each step is a blowup along a smooth center of codimension $2$, with the exceptional divisor a $\PP^1$-bundle over the center.
\end{enumerate}
	\end{defn}
	
In \cite[Corollary 2.6]{sheng2013maximal}, Sheng, Xu and Zuo construct explicitly good crepant resolutions for Calabi--Yau orbifolds $Y$ arising as cyclic covers.
\begin{prop}[Sheng-Xu-Zuo]
\label{proposition: sheng xu zuo}
Suppose $Y$ is a Calabi--Yau orbifold arising as cyclic covers of certain smooth projective variety branching along a normal crossing divisor $D=\sum\limits_{i=1}^m D_i$. Then there exists a good crepant resolution $X=X_N\to X_{N-1}\to\cdots\to X_1\to Y$ for $Y$ with the center of each blowup being the intersection of two divisors arising as a strict transform of $D_i$ or exceptional divisors of previous blowups.
\end{prop}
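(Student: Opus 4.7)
The plan is to construct the resolution by induction on the maximal order of singularities of $Y$, using iterated blow-ups along codim-$2$ intersections of divisors. I would begin with the local analytic model: near a point $p\in Y$ over a point $q\in Z$ where precisely the branch components $D_{i_1},\cdots,D_{i_k}$ meet transversely, there are local analytic coordinates $u,x_{i_1},\cdots,x_{i_k},\cdots$ in which $Y$ is cut out by $u^d=x_{i_1}\cdots x_{i_k}$, and the reduced preimages $B_j:=\pi^{-1}(D_j)_{\mathrm{red}}$ are given by $\{u=x_j=0\}$. The singular locus of $Y$ is then the union of the codim-$2$ loci $B_{i_a}\cap B_{i_b}$ for $a\neq b$, which provides the natural candidates for blow-up centers.

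The inductive step proceeds as follows. At each stage, pick two divisors $B,B'$ among the strict transforms of the $B_j$ together with the exceptional divisors of previous blow-ups such that $B\cap B'$ is nonempty and contained in the singular locus, and blow up $X_{s-1}$ along $B\cap B'$. A chart-by-chart calculation of this blow-up in the local model above verifies the geometric claims: after blowing up the ideal $(u,x_1,x_2)$, in the chart where $u$ is the exceptional coordinate (so $x_1=u\tilde x_1,\ x_2=u\tilde x_2$) the strict transform becomes $u^{d-2}=\tilde x_1\tilde x_2 x_3\cdots x_k$, a hypersurface of the same shape but with cyclic index dropped by $2$; in the charts where $x_1$ or $x_2$ is exceptional the strict transform is smooth. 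In particular the exceptional divisor is a $\PP^1$-bundle over the smooth center $B\cap B'$, and the codim-$2$ hypothesis is fulfilled. Crepancy is then checked via $K_{X_s}=f^*K_{X_{s-1}}+a\,E_s$: because the center is the intersection of two divisors in the ramification locus, whose multiplicities in the cyclic-cover ramification formula match the codim-$2$ discrepancy contribution, one computes $a=0$, so $K_{X_s}\cong\calO_{X_s}$ is preserved from the Calabi-Yau $K_Y\cong\calO_Y$.

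Termination is the combinatorial heart of the argument. In the purely pairwise case $k=2$ the index strictly decreases $d\mapsto d-2$ after each blow-up of that stratum, so resolution of an $A_{d-1}$-locus requires $\lfloor d/2\rfloor$ steps and the procedure stops. The main obstacle is the case $k\geq 3$ of higher multiple intersections, where the $u$-chart of one blow-up re-introduces a monomial singularity $u^{d-2}=\tilde x_1\tilde x_2 x_3\cdots x_k$ that still has branches of order $\ge 3$ meeting. One must order the blow-ups carefully, for instance by always blowing up a pair involving the deepest surviving stratum first, and track how the exponents of the local monomial and the number of meeting branches evolve under each chart. This is exactly the explicit construction carried out in \cite[Corollary 2.6]{sheng2013maximal}, which organizes the blow-up sequence so that after finitely many steps $X_N$ is smooth, and from which conditions (i)--(iii) of Definition \ref{definition: good crepant} can be read off directly from the chart computations above.
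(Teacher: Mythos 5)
The paper itself does not prove this proposition; it is quoted directly from \cite[Corollary 2.6]{sheng2013maximal}, and the good-crepant structure is then used in the proof of Proposition \ref{proposition: crepant is CY}. Your sketch correctly identifies the local model $u^d = x_{i_1}\cdots x_{i_k}$, and the crepancy computation is right: the center has codimension $3$ in the ambient space, the multiplicity of $Y$ along it is $2$, so the discrepancy is $(3-1)-2=0$.

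However there is a genuine gap in the chart verification. The claim that ``the exceptional divisor is a $\PP^1$-bundle over the smooth center $B\cap B'$'' is false for the blow-up along the \emph{reduced} ideal of $B_1\cap B_2$ when $d\geq 3$. The projectivized tangent cone of $u^d - x_1x_2\cdots x_k$ along $\{u=x_1=x_2=0\}$ is the degree-$2$ hypersurface $\{\tilde x_1\tilde x_2 = 0\}\subset\PP^2$, a degenerate conic; so the exceptional divisor of $\widetilde{Y}\to Y$ is a union of \emph{two} $\PP^1$-bundles meeting along a section, not a single $\PP^1$-bundle. This is exactly condition (iii) of Definition \ref{definition: good crepant}, which your computation does not verify. (Only for $d=2$, where the conic $u^2=\tilde x_1\tilde x_2$ is smooth, would the naive blow-up yield a genuine $\PP^1$-bundle, but the hypothesis here is $d\geq 3$.) Likewise, the assertion that the $x_1$- and $x_2$-charts of the strict transform are smooth fails for $d\geq 4$ (the strict transform is $x_1^{d-2}\tilde u^d = \tilde x_2 x_3\cdots x_k$, singular) and also for $k\geq 3$ with $d=3$ (singular along the remaining intersection strata). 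The Sheng--Xu--Zuo resolution therefore must use more refined blow-up ideals than the reduced ideal of the pairwise intersection. Since you ultimately defer the termination and the precise sequencing to the same reference the paper cites, the logical dependence is the same; but the intermediate chart computations as written do not verify conditions (i)--(iii) of Definition \ref{definition: good crepant}, which is what a self-contained proof would need to do.
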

	%via successively blowing up the intersections of any two divisors of the following $\pi^{-1}(D_i)$, the exceptional divisor from previous blowup, or the strict transforms of these divisors.
	
\begin{rmk}
Finding crepant resolutions for Calabi--Yau orbifolds is an important problem. In \cite{cynk2007higher}, Cynk and Hulek investigate the existence of crepant resolutions for many Calabi--Yau orbifolds, which are finite quotients of products of Calabi--Yau manifolds with smaller dimensions.
	\end{rmk}
	
Let $Y$ be a Calabi--Yau orbifold with crepant resolution $f\colon X\to Y$. We have a linear equivalence $K_X\cong f^*K_Y$. Let $X_0\coloneqq f^{-1}Y_0$ be the preimage of the smooth part $Y_0$ of $Y$. Since $Y$ is Calabi--Yau, the restriction of $K_Y$ to $Y_0$ is linearly equivalent to $\calO_{Y_0}$. Thus, the restriction of $K_X$ to $X_0$ is linearly equivalent to $\calO_{X_0}$. However, we do not necessarily conclude that $X$ is a Calabi--Yau manifold. We show that this is the case once the crepant resolution $X\to Y$ is good.
	
\begin{prop}
\label{proposition: crepant is CY}
Let $Y$ be a Calabi--Yau orbifold with good crepant resolution $f\colon X\to Y$. Then the induced map $f^*\colon H^{k,0}(Y)\to H^{k,0}(X)$ is an isomorphism for $k=0,1,\cdots, n$. In particular, $X$ is a Calabi--Yau manifold.
\end{prop}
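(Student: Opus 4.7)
The plan is to show that pullback along $f$ induces an isomorphism $f^*\colon H^0(Y,\widehat{\Omega}^k_Y) \to H^0(X,\Omega^k_X)$ for each $k$, and then to deduce the ``in particular'' clause from the crepancy condition. Since $f$ restricts to an isomorphism $f^{-1}(Y_0)\xrightarrow{\sim} Y_0$ by Definition \ref{definition: good crepant}(ii), and $Y$ has only quotient singularities (so $Y\setminus Y_0$ has codimension at least two), every reflexive $k$-form on $Y$ pulls back to a genuine holomorphic $k$-form on the open set $X_0 := f^{-1}(Y_0)\subset X$; the whole question is whether it extends across the exceptional divisor. Injectivity of $f^*$ is immediate because $X$ is smooth and $X_0\subset X$ is open and dense.

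For surjectivity, I would argue by induction along the tower $X = X_N \to X_{N-1} \to \cdots \to X_1 \to Y$ of Definition \ref{definition: good crepant}. Let $\pi_i\colon X_{i+1}\to X_i$ be the blow-up along the smooth codimension-two center $C_i$. For a blow-up of a smooth variety along a smooth codimension-two center the classical blow-up formula gives
\begin{equation*}
H^{p,q}(X_{i+1}) \cong \pi_i^* H^{p,q}(X_i) \oplus H^{p-1,q-1}(C_i),
\end{equation*}
so for $q=0$ the pullback $\pi_i^*$ identifies $H^{p,0}(X_i)$ with $H^{p,0}(X_{i+1})$. For those early stages in which $X_i$ still carries quotient singularities, the same formula, interpreted with reflexive differentials $\widehat{\Omega}^p_{X_i}$ in place of $\Omega^p_{X_i}$, continues to hold provided $C_i$ is transverse to the orbifold structure; this transversality is built into the explicit construction of Sheng-Xu-Zuo (Proposition \ref{proposition: sheng xu zuo}), where each $C_i$ is a transverse intersection of two strict transforms of normal-crossing components and is locally modelled on a coordinate subspace that descends well through the cyclic quotient.

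The hard part is making that orbifold blow-up formula rigorous at the first few steps where the ambient space is genuinely singular. The cleanest way to bypass this issue is to appeal to the extension theorem for differential forms on varieties with quotient singularities (a consequence of Steenbrink's theory, later generalised by Greb-Kebekus-Kov\'acs-Peternell to klt singularities): for any resolution $f\colon X\to Y$ with $Y$ having only quotient singularities one has $f_*\Omega^p_X = \widehat{\Omega}^p_Y$. This at once gives $H^0(X,\Omega^p_X) = H^0(Y,\widehat{\Omega}^p_Y)$ and bypasses the blow-up-by-blow-up bookkeeping entirely; the ``good'' hypothesis is then only used to guarantee condition (ii) of Definition \ref{definition: good crepant}.

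Finally, for the ``in particular'' clause: from $H^{k,0}(X)\cong H^{k,0}(Y)$ together with Definition \ref{definition: CY orbifold} we obtain $H^0(X,\Omega^k_X)=0$ for $1\le k\le n-1$. The crepancy $K_X\cong f^*K_Y$, combined with $\widehat{\Omega}^n_Y\cong \calO_Y$, forces $\Omega^n_X\cong \calO_X$, so $X$ is a Calabi-Yau manifold in the sense of Definition \ref{definition: CY orbifold}.
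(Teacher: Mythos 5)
Your proof is correct but takes a genuinely different route from the paper's. The paper also works along the tower $X=X_N\to\cdots\to X_0=Y$, but rather than invoking a sheaf-level blowup formula it applies, at each step $f_i\colon X_{i+1}\to X_i$ (blowup along $C_i$ with exceptional divisor $E_i$), the long exact sequence of mixed Hodge structures for the discriminant square from Peters--Steenbrink. Since $X_i$, $X_{i+1}$, $C_i$, $E_i$ are all projective orbifolds, hence $\QQ$-homology manifolds, each $H^k$ is pure of weight $k$, so the long exact sequence collapses to a short exact sequence $0\to H^k(X_i)\to H^k(X_{i+1})\oplus H^k(C_i)\to H^k(E_i)\to 0$ of pure Hodge structures; taking the $(k,0)$-pieces and using that $H^{k,0}(E_i)\cong H^{k,0}(C_i)$ for the $\PP^1$-bundle $E_i\to C_i$ yields $H^{k,0}(X_i)\cong H^{k,0}(X_{i+1})$, and induction finishes. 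So the issue you flagged at the singular intermediate stages is real, but the paper handles it by purity of weight on orbifold cohomology rather than by any orbifold blowup formula for reflexive differentials. Your alternative --- the Steenbrink/Greb--Kebekus--Kov\'acs--Peternell extension theorem $f_*\Omega^p_X\cong\widehat{\Omega}^p_Y$ --- gives $H^0(X,\Omega^p_X)\cong H^0(Y,\widehat{\Omega}^p_Y)$ in one stroke and is both cleaner and more general: it holds for any resolution of a klt (in particular quotient-singular) $Y$, rendering the ``good'' hypothesis and even condition (ii) of Definition \ref{definition: good crepant} superfluous for this particular statement. What the paper's blowup-by-blowup argument buys instead is that it stays inside the Hodge-theoretic toolbox already assembled in \S\ref{subsection: projective orbifold}, and it tracks $H^{k,0}$ at every intermediate stage, which parallels the stepwise analysis used again in Proposition \ref{proposition: complete}. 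Your deduction of the ``in particular'' clause from crepancy $K_X\cong f^*K_Y$ together with Definition \ref{definition: CY orbifold} is correct and is the intended reading; the paper leaves that final step implicit.
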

\begin{proof}
Let $f\colon X=X_N\to X_{N-1}\to\cdots\to X_1\to X_0=Y$ be the decomposition that satisfies the conditions of Definition \ref{definition: good crepant}. Take $i\in\{0,1,\cdots, N-1\}$, consider $f_i\colon X_{i+1}\to X_i$. Denote by $C_i\subset X_i$ the subvariety of codimension $2$, such that $f_i$ is the blowup along $C_i$. The preimage $E_i=f_i^{-1}C_i$ is a $\PP^1$-bundle over $C_i$. By \cite[Corollary-Definition 5.37]{peters2008mixed}, we have a long exact sequence of mixed Hodge structures:
\begin{equation*}
\cdots\to H^k(X_i)\to H^k(X_{i+1})\oplus H^k(C_i)\to H^k(E_i)\to H^{k+1}(X_i)\to \cdots
\end{equation*}
The mixed Hodge structures on $H^k(X_i), H^k(X_{i+1}), H^k(C_i), H^k(E_i)$ are pure of weight $k$. Therefore, we have an exact sequence of pure Hodge structures:
\begin{equation*}
0\longrightarrow H^k(X_i)\longrightarrow H^k(X_{i+1})\oplus H^k(C_i)\longrightarrow H^k(E_i)\longrightarrow 0
\end{equation*}
To show $H^{k,0}(X_i)\cong H^{k,0}(X_{i+1})$, it suffices to show that $H^{k,0}(C_i)\cong H^{k,0}(E_i)$. This is true because $E_i$ is a $\PP^1$-bundle over $C_i$.
	\end{proof}

We give the following criterion for the completeness of $\X_T\to\sU_T$:
\begin{prop}
\label{proposition: complete}
Suppose $T=(L_1, \cdots, L_m)$ is partition of ball type in Theorem \ref{theorem: main}. Then the family $\X_T\to\sU_T$ of Calabi--Yau manifolds is complete if and only if any transversal intersection $D_i\cap D_j$ (for $D_i\in |L_i|, D_j\in |L_j|$) satisfies $H^{n-2,0}(D_i\cap D_j)=0$, namely, $D_i\cap D_j$ admits no nonzero holomorphic $(n-2)$-forms.
	\end{prop}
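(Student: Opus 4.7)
The plan is to recast completeness as a Hodge-number equality and analyze the discrepancy through the Sheng--Xu--Zuo resolution. Since $X$ is Calabi--Yau by Proposition \ref{proposition: crepant is CY}, contraction with the holomorphic $n$-form gives $H^1(X, \calT_X) \cong H^{n-1,1}(X)$, so completeness of $\X_T \to \sU_T$ is equivalent to
\begin{equation*}
\dim \sU_T - \dim \Aut(Z) = h^{n-1,1}(X).
\end{equation*}
The left-hand side equals $h^{n-1,1}_\chi(Y)$ by Theorem \ref{theorem: local torelli} combined with Proposition \ref{proposition: stability}, so it suffices to show that $h^{n-1,1}(X) = h^{n-1,1}_\chi(Y)$ is equivalent to the stated vanishing.

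Next I would compute $h^{n-1,1}(X)$ inductively along the chain $X = X_N \to \cdots \to X_1 \to X_0 = Y$ from Proposition \ref{proposition: sheng xu zuo}. The short exact sequence of pure Hodge structures used in the proof of Proposition \ref{proposition: crepant is CY} (available because each $X_i$ is a $\QQ$-homology manifold), together with the $\PP^1$-bundle Hodge decomposition $h^{p,q}(E_i) = h^{p,q}(C_i) + h^{p-1,q-1}(C_i)$, yields $h^{n-1,1}(X_{i+1}) = h^{n-1,1}(X_i) + h^{n-2,0}(C_i)$, and hence
\begin{equation*}
h^{n-1,1}(X) = h^{n-1,1}(Y) + \sum_{i=0}^{N-1} h^{n-2,0}(C_i).
\end{equation*}
In parallel, decomposing $H^{n-1,1}(Y)$ under the $\mu_3$-action gives $h^{n-1,1}(Y) = h^{n-1,1}(Z) + h^{n-1,1}_\chi(Y) + h^{n-1,1}_{\overline{\chi}}(Y)$; for $n \geq 3$ the Hodge diamond of $Z = (\PP^1)^n$ is supported on the diagonal so $h^{n-1,1}(Z) = 0$, while in the ball-type setting $h^{n-1,1}_{\overline{\chi}}(Y) = \overline{h^{1,n-1}_\chi(Y)} = 0$. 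Consequently completeness is equivalent to the vanishing $h^{n-2,0}(C_i) = 0$ for every blowup center.

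Finally I would classify which centers contribute. By Proposition \ref{proposition: sheng xu zuo}, each $C_i$ is an intersection of two codimension-one smooth subvarieties, each being either a strict transform of an original branch divisor or an exceptional divisor of an earlier blowup. Centers of the first kind are precisely the pairwise intersections $D_j \cap D_k$ of the statement, and each such intersection must appear since it forms a copy of the codimension-two singular locus of $Y$, where $Y$ locally acquires the $A_2$ singularity $t^3 = u_j u_k$. Any other center is $\PP^1$-fibered over an intersection of at least three branch divisors $D_{j_1} \cap D_{j_2} \cap D_{j_3}$ of dimension at most $n-3$, so the $\PP^1$-bundle formula together with a dimension count forces $h^{n-2,0}(C_i) = 0$. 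The sum therefore collapses to a nonnegative multiple of $\sum_{j<k} h^{n-2,0}(D_j \cap D_k)$, vanishing if and only if each summand does. The main obstacle is to carry out the local analysis of iterated blowups carefully, in particular to confirm that the nested exceptional divisors produced by resolving each transverse $A_2$ family over $D_j \cap D_k$ give rise only to centers that are $\PP^1$-fibered over a higher-codimension intersection (and hence contribute trivially), so that no extra terms slip past the count above.
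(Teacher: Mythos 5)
Your proof takes essentially the same route as the paper's: both reduce completeness to the Hodge-number equality $h^{n-1,1}(X) = h^{n-1,1}_\chi(Y)$, compute $h^{n-1,1}(X) - h^{n-1,1}(Y) = \sum_i h^{n-2,0}(C_i)$ along the Sheng--Xu--Zuo blowup chain, and then classify the centers $C_i$ into (a) copies of pairwise intersections $D_j \cap D_k$ and (b) $\PP^1$-bundles over triple (or deeper) intersections of dimension $\leq n-3$, the latter contributing zero. Your explicit $\mu_3$-eigenspace decomposition $h^{n-1,1}(Y) = h^{n-1,1}(Z) + h^{n-1,1}_\chi + h^{n-1,1}_{\overline\chi}$, with the first term vanishing because $(\PP^1)^n$ has diagonal Hodge diamond and the last term vanishing by the ball-type hypothesis, makes explicit a step the paper asserts without elaboration; that is a useful clarification, but the overall argument coincides. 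The closing caveat you flag about iterated blowups is indeed where both proofs rely on the structure of the Sheng--Xu--Zuo resolution of transverse $A_2$ families (the second blowup center over a stratum $D_j\cap D_k$ is again a copy of $D_j\cap D_k$, so it only changes multiplicities, not the vanishing criterion), and the paper handles this at the same level of rigor you do.
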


\begin{proof}
The family $\X_T\to\sU_T$ of Calabi--Yau manifolds is complete around $D\in\sU_T$ if 
		\begin{equation*}
  T_D{\sU_T}\to H^1(X, T_X) \cong \Hom(H^{n,0}(X), H^{n-1,1}(X))
		\end{equation*}
is surjective. From Theorem \ref{theorem: local torelli}, the map $$T_D{\sU_T}\to H^1(Y,\calT_Y)\cong \Hom(H^{n,0}(Y), H^{n-1,1}_\chi(Y))$$ is surjective. If $T$ is a ball-type partition in Theorem \ref{theorem: main}, then $H^{n-1,1}_\chi(Y)\cong H^{n-1,1}(Y)$. By Proposition \ref{proposition: crepant is CY}, the crepant resolution $X\to Y$ can be decomposed as $X=X_N\to X_{N-1}\to\cdots\to X_1=X$, such that each step $f_i\colon X_{i+1}\to X_i$ is the blowup along a center $C_i\subset X_i$ of codimension two. So $H^{n-1,1}(X_{i+1})\cong H^{n-1,1}(X_i)\oplus H^{n-2,0}(C_i)[1]$. Thus $H^{n-1,1}(Y)\cong H^{n-1,1}(X)$ if and only if all centers $C_i$ satisfy $H^{n-2,0}(C_i)=0$, namely $C_i$ do not have global nonzero holomorphic $(n-2)$-forms. We claim that this is true if and only if all intersections $D_i\cap D_j$ do not admit nonzero global holomorphic $(n-2)$-forms.

On one hand, each $D_i\cap D_j$ is birational to certain $C_i$. Notice that $H^{n-2,0}$ is a birational invariant. So if $C_i$ do not admit nonzero global holomophic $(n-2)$-forms, then neither do $D_i\cap D_j$.

On the other hand, each $C_i$ is either birational to certain $D_i\cap D_j$, or birational to the intersection of the exceptional divisor of certain $f_j\colon X_{j+1}\to X_j$ with the strict transformation of an irreducible divisor in $X_j$, and in our case such an intersection must be a $\PP^1$-bundle over a smooth projective variety of dimension $n-3$, hence does not admit nonzero global holomorphic $(n-2)$-forms. Therefore, if any $D_i\cap D_j$ do not admit nonzero global holomorphic $(n-2)$-forms, then neither do $C_i$.
\end{proof}
	
\begin{cor}
Let (Z, d, L, T) be ball-type cases in Theorem \ref{theorem: main}. If $Z=(\PP^1)^3$, those $T$ such that $\X_T\to \sU_T$ is complete are listed in Table \ref{table: CY Deligne--Mostow}. If $Z=(\PP^1)^n$ with $n=2$ or $\ge 4$, then any ball-type partition $T$ is complete.
\end{cor}
\begin{proof}
This is direct from Proposition \ref{proposition: complete}. When $Z=(\PP^1)^3$, the type is complete if and only if any component of $D_i\cap D_j$ is rational. The last column of Table \ref{table: CY Deligne--Mostow} is by straightforward calculation. The case $Z=\PP^2$ is clear.

For $Z=(\PP^1)^n$ with $n\ge 4$. Let $T$ be a partition of $\calO(3,\cdots, 3)$ of ball type. Then the sum of any two divisors has at least $n-3\ge 1$ components equal to zero. Thus the corresponding intersection $D_i\cap D_j$ is a $\PP^1$-bundle over a smooth projective variety (may be disconnected). Therefore $H^{n-2,0}(D_i\cap D_j)=0$. So $T$ is always complete.
\end{proof}
	
\section{Period Map and Monodromy Group}
\label{section: period map}
In this section we prove the arithmeticity of the monodromy groups in the ball type cases in Theorem \ref{theorem: main}. We first prove the following general result.

\begin{prop}
\label{proposition: arithmeticity general}
Suppose  $\DD$ is a Hermitian symmetric domain of noncompact type and $\Gamma\subset \Aut(\DD)$ an arithmetic subgroup. Suppose $\calM$ to be a smooth quasi-projective complex variety with universal cover $\calM^u$. Suppose $\rho\colon \pi_1(\calM)\to \Gamma$ is a group homomorphism. Suppose $p\colon \calM^u\to \DD$ is a $\rho$-equivariant holomorphic map. Furthermore, assume that $p$ is locally biholomorphic at generic points in $\calM^u$. Then $[\Gamma:\rho(\pi_1(\calM))]<\infty$. In particular, $\rho(\pi_1(\calM))$ is arithmetic.
\end{prop}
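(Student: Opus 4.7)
The plan is to translate the hypothesis into algebraic geometry via Baily--Borel and Borel's extension theorem, and then deduce the index bound from a covering-space degree count.

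Write $\Gamma'=\rho(\pi_1(\calM))\subset\Gamma$. Because $p$ is $\rho$-equivariant, it descends to a holomorphic map $\bar p\colon\calM\to\Gamma'\bs\DD$, and its composition with the natural covering $\pi\colon\Gamma'\bs\DD\to\Gamma\bs\DD$ of degree $[\Gamma:\Gamma']$ (a priori possibly infinite) gives $f=\pi\circ\bar p\colon\calM\to\Gamma\bs\DD$. By Baily--Borel the target $\Gamma\bs\DD$ is a quasi-projective variety, and by Borel's extension theorem the holomorphic map $f$ from the quasi-projective $\calM$ is automatically algebraic. By Selberg's lemma I may first replace $\Gamma$ by a torsion-free subgroup of finite index; this does not affect the conclusion, so assume $\Gamma$ is torsion-free.

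Since $p$ is locally biholomorphic at a generic point, so is $f$, and in particular $\dim\calM=\dim\DD$ and $f$ is dominant and generically \'etale. Hence $f$ has a well-defined finite generic degree $d$. I would then pick a nonempty Zariski open $U\subset\Gamma\bs\DD$ over which $f|_{f^{-1}(U)}\colon f^{-1}(U)\to U$ is a finite \'etale cover of degree $d$, and factor it as
\[
f^{-1}(U)\xrightarrow{\bar p}\pi^{-1}(U)\xrightarrow{\pi}U.
\]
Here $\bar p$ is a local homeomorphism (from the local biholomorphism hypothesis) and is proper over $U$, being a factor of the proper map $f|_{f^{-1}(U)}$ through the separated covering $\pi$. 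Therefore the image $A=\bar p(f^{-1}(U))$ is both open in $\pi^{-1}(U)$ (as $\bar p$ is an open map) and closed (as $\bar p$ is proper), so $A$ is a union of connected components of $\pi^{-1}(U)$.

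The main obstacle I expect is showing that $A$ exhausts $\pi^{-1}(U)$, i.e.\ that $\bar p$ reaches every sheet of $\pi$. For this I would argue that $\pi^{-1}(U)$ is already connected: the inclusion $U\hookrightarrow\Gamma\bs\DD$ removes a Zariski closed subset of complex codimension at least one, so $\pi_1(U)\twoheadrightarrow\pi_1(\Gamma\bs\DD)=\Gamma$ is surjective; the covering $\pi^{-1}(U)\to U$ corresponds to the $\pi_1(U)$-set $\Gamma/\Gamma'$, on which the full group $\Gamma$, and hence $\pi_1(U)$, acts transitively. Consequently $A=\pi^{-1}(U)$ and, comparing fiber cardinalities over a generic point of $U$,
\[
[\Gamma:\Gamma']=\deg(\pi|_{\pi^{-1}(U)}) \le \deg(f|_{f^{-1}(U)})=d<\infty,
\]
which gives the desired finite-index conclusion; arithmeticity of $\rho(\pi_1(\calM))$ then follows because it has finite index in the arithmetic group $\Gamma$.
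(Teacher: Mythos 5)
Your proof is correct, and it shares the key ingredients with the paper's argument: reduce to torsion-free $\Gamma$ via Selberg, invoke Borel's extension theorem to make the map algebraic via the Baily--Borel compactification, restrict to a Zariski open $U$ over which the period map is a finite covering, and exploit the surjectivity of $\pi_1(U)\twoheadrightarrow\pi_1(\Gamma\backslash\DD)=\Gamma$ for a Zariski-open inclusion. Where the two routes diverge is in the final index count. The paper forms a commutative square of fundamental groups with $\pi_1(\Prd^{-1}V)\hookrightarrow\pi_1(V)$ of finite index on top and surjections $\pi_1(\Prd^{-1}V)\twoheadrightarrow\pi_1(\calM)$, $\pi_1(V)\twoheadrightarrow\Gamma$ on the sides, and reads off $[\Gamma:\rho(\pi_1(\calM))]<\infty$ by a diagram chase. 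You instead factor $f$ explicitly through the intermediate cover $\Gamma'\backslash\DD$, prove $\pi^{-1}(U)$ is connected using the same $\pi_1$-surjectivity plus transitivity of $\Gamma$ on $\Gamma/\Gamma'$, and then use properness (as a factor of a finite map through a separated cover) together with the generic-\'etale hypothesis to show the image of $f^{-1}(U)$ is a nonempty open and closed subset, hence all of $\pi^{-1}(U)$, giving $[\Gamma:\Gamma']\le\deg f<\infty$. Both are valid; yours has the pedagogical advantage of making the role of the intermediate cover $\Gamma'\backslash\DD$ and the degree count geometrically transparent, while the paper's version is somewhat more compact and avoids discussing $\Gamma'\backslash\DD$ directly (which is convenient since, a priori, one does not yet know it is quasi-projective). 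One small bookkeeping point worth spelling out: when you pass to a torsion-free finite-index $\Gamma_1\subset\Gamma$, you should also replace $\calM$ by the corresponding finite cover associated with $\rho^{-1}(\Gamma_1)$, as the paper does explicitly; your phrase ``this does not affect the conclusion'' glosses over this step, which is harmless but should be made precise.
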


\begin{proof}
The arithmetic group $\Gamma$ admits a torsion-free normal subgroup, say $\Gamma_1$. Let $H=\rho^{-1}(\Gamma_1)\triangleleft \pi_1(M)$. There exists a covering $\widetilde{\calM}\to\calM$ such that the image of $\pi_1(\widetilde{\calM})\hookrightarrow \pi_1(\calM)$ is equal to $H$. The finiteness of $[\Gamma:\rho(\pi_1(\calM))]$ and $[\Gamma_1:\rho(H)]$ are equivalent. Therefore, we may assume (without loss of generality) that $\Gamma$ is torsion-free.

Denote by $\Prd$ the analytic map $\calM\to \Gamma\bs \DD$ induced by $p\colon \calM^u\to \DD$ (after taking quotient of $\rho\colon \pi_1(\calM)\to \Gamma$). Borel's extension theorem (\cite{borel1972some})  enables us to extend $\Prd$ to an algebraic morphism
\begin{equation*}
\Prd\colon \overline{\calM}\to \overline{\Gamma \bs\DD}^{bb}
\end{equation*}
between two complex projective varieties. Here $\overline{\calM}$ is a compactification of $\calM$ with $\overline{\calM}-\calM$ a normal crossing boundary divisor, and $\overline{\Gamma \bs\DD}^{bb}$ is the Baily-Borel compactification of $\Gamma\bs\DD$.

The extended map $\Prd\colon\overline{\calM}\to \overline{\Gamma \bs\DD}^{bb}$ is locally biholomorphic at generic points in $\overline{\calM}$. Thus there exists a smooth open affine subvariety $V \subset  \Gamma \bs\DD$ such that $\Prd^{-1}V\subset \calM$ is also affine and $\Prd\colon \Prd^{-1}V\to V$ is a finite morphism.  We may also ask (after shrinking $V$ if necessary) $\Prd$ to be locally biholomorphic at any points in $\Prd^{-1}V$. Now $\Prd\colon \Prd^{-1}V\to V$ is proper (namely, the preimage of a compact set is compact) and is locally bihomeomorphism everywhere, hence (by standard topology theory) a finite covering map (in the canonical topological sense). This implies that the group morphism $\pi_1(\Prd^{-1}V)\to \pi_1(V)$ is injective and has finite index.

We have the following diagram of group morphisms:
\begin{equation*}
\begin{tikzcd}
\pi_1(\Prd^{-1}V)\arrow{r}\arrow[d, twoheadrightarrow] & \pi_1(V) \arrow[d, twoheadrightarrow] \\
\pi_1(M)\arrow{r}{\rho} & \pi_1(\Gamma\bs \DD)=\Gamma
\end{tikzcd}
\end{equation*}
where the two vertical maps are surjective (see e.g. \cite[Theorem 2.1]{arapura2016fundamental}). This clearly implies that $[\Gamma:\rho(\pi)]<\infty$.
\end{proof}

We make the following conjecture which generalize \cite[Proposition 5.3]{mostow1988discontinuous}.
\begin{conj}
Suppose $\calM$ is a smooth quasi-projective complex variety and $\calM^u$ its universal covering. Suppose $\DD$ to be a Hermitian symmetric domain of noncompact type. Suppose $p\colon\calM^u\to \DD$ to be a holomorphic map with an equivariant action of $\pi_1(\calM)$. Let $\rho\colon \pi_1(M)\to \Aut(\DD)$. Assume that $p$ is locally biholomorphic at generic points of $\calM^u$. Assume moreover that $\rho(\pi_1(M))$ is discrete in $\Aut(\DD)$. Then $\rho(\pi_1(M))$ is a lattice.
\end{conj}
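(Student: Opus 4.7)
The plan is to mimic the structure of the proof of Proposition~\ref{proposition: arithmeticity general} but without invoking Borel's extension theorem, which presupposes arithmeticity. Since $\calM$ is quasi-projective, $\pi_1(\calM)$ is finitely generated, hence so is $\Gamma = \rho(\pi_1(\calM))$; by Selberg's lemma $\Gamma$ contains a torsion-free normal subgroup of finite index, and after replacing $\calM$ by the corresponding finite \'etale cover we may assume $\Gamma$ is torsion-free. Then $\Gamma\backslash\DD$ is a smooth complex manifold with $\pi_1 = \Gamma$, and the induced holomorphic map $\bar p\colon \calM \to \Gamma\backslash\DD$ is locally biholomorphic at generic points, hence dominant onto an open subset $U\subset \Gamma\backslash\DD$ and generically finite of some degree $d<\infty$.

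The central step is an Ahlfors-Schwarz-type volume estimate. Let $\omega$ be the $\Aut(\DD)$-invariant K\"ahler form of the Bergman metric on $\DD$, which descends to $\Gamma\backslash\DD$. Fix a smooth projective compactification $\bar\calM \supset \calM$ with simple normal crossing boundary $E=\bar\calM-\calM$, and equip $\calM$ with an associated K\"ahler-hyperbolic metric $\omega_\calM$ with Poincar\'e-type singularities along $E$. This metric is complete on $\calM$, has Ricci curvature bounded below, and satisfies the standard finiteness $\int_\calM \omega_\calM^n < \infty$. Since the Bergman metric on $\DD$ has holomorphic bisectional curvature bounded above by a negative constant, Yau's Schwarz lemma yields a pointwise bound $\bar p^* \omega \le C\cdot \omega_\calM$ on $\calM$ for some constant $C$, and consequently $\int_\calM (\bar p^*\omega)^n < \infty$. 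Dividing by the generic degree $d$ of $\bar p$ gives $\mathrm{vol}(U) \le d^{-1}\int_\calM (\bar p^* \omega)^n < \infty$.

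The main obstacle is to promote finite volume of $U$ to $\mathrm{vol}(\Gamma\backslash\DD)<\infty$, equivalently to show that $\bar p$ is surjective (or at least that the complement $\Gamma\backslash\DD - U$ has measure zero). The proposed route is to extend $\bar p$ across the boundary: one attempts to construct, without assuming arithmeticity, a partial compactification $(\Gamma\backslash\DD)^{\ast}$ obtained by adding horospherical boundary components in a Harish-Chandra realization of $\DD$ (or, when $p$ arises from a variation of Hodge structure, by invoking Schmid's nilpotent orbit theorem), and then to extend $\bar p$ to a proper holomorphic map $\bar\calM\to (\Gamma\backslash\DD)^{\ast}$. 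If such an extension exists, $\bar p(\calM)$ is both open and closed in the connected manifold $\Gamma\backslash\DD$, hence equals $\Gamma\backslash\DD$, and $\Gamma$ is therefore a lattice. Making this extension rigorous is the heart of the difficulty, being the analogue in the non-arithmetic setting of Borel's extension theorem used in Proposition~\ref{proposition: arithmeticity general}; plausibly one must either strengthen the hypothesis (e.g.\ require that $p$ be proper along an algebraic stratification of $\bar\calM$) or exploit additional structure of $p$ not captured by the present formulation.
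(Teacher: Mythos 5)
This statement is labelled a \emph{Conjecture} in the paper; the authors give no proof and only remark that it generalizes \cite[Prop.~5.3]{mostow1988discontinuous}. What you have written is a proof \emph{strategy}, and you say so yourself: the decisive step --- constructing a partial compactification $(\Gamma\bs\DD)^{\ast}$ and extending $\bar p\colon\calM\to\Gamma\bs\DD$ to a proper map $\overline{\calM}\to(\Gamma\bs\DD)^{\ast}$ in the absence of arithmeticity --- is precisely the open part of the problem, and you do not close it. In the arithmetic case (Proposition~\ref{proposition: arithmeticity general}) this role is played by the Baily--Borel compactification and Borel's extension theorem, neither of which is available here: for a discrete $\Gamma$ that is not yet known to be a lattice there is no canonical $\QQ$-structure, no rational boundary components, and no Satake/Baily--Borel boundary to extend to, while constructions such as Siu--Yau or Mok compactifications assume $\Gamma$ is a lattice from the outset. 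So the proposal correctly isolates the difficulty but leaves the conjecture exactly as open as the paper does.

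A secondary soft spot is the volume estimate. Yau's Schwarz lemma in the form you invoke requires the target to have holomorphic \emph{bisectional} curvature bounded above by a negative constant; on a Hermitian symmetric domain of higher rank the Bergman metric has holomorphic sectional curvature pinched between negative constants, but bisectional curvature is only nonpositive and degenerates along flat directions, so the pointwise bound $\bar p^{*}\omega\le C\,\omega_{\calM}$ does not follow verbatim. One can salvage finite volume of the image via a Chern--Lu/Royden-type inequality controlling the Jacobian rather than the full pullback metric, and the division by a generic degree $d$ is unnecessary (the pushforward measure already dominates the volume of the image), but even granting $\mathrm{vol}(U)<\infty$ you still need $\Gamma\bs\DD\smallsetminus U$ to have measure zero, which again hinges on the missing extension/surjectivity step. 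Net: plausible outline, honest about the gap, but not a proof.
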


Now suppose $(Z,d,L,T)$ satisfies Condition \ref{condition}. Suppose $Z=\PP^{n_1}\times\cdots\times \PP^{n_t}$. By Proposition \ref{proposition: stability}, a generic point of $\PP_T=\prod\limits_{i=1}^m |L_i|$ is stable under $G=\SL(n_1+1)\times\cdots\times \SL(n_t+1)$. Let $\sU^\circ_T\subset\PP_T$ be the subset of stable and normal crossing divisors of type $T$. Define 
\begin{equation*}
\calM_T\coloneqq \sU^\circ_T//G
\end{equation*}
to be the GIT quotient.
We have a monodromy representation 
\begin{equation*}
\pi_1(\sU^\circ_T)\to \PU(H^n(Y, \ZZ[\xi_d])_\chi, h)\xhookrightarrow{} \PU(H^n(Y, \CC)_\chi, h)
\end{equation*}
with the image denoted by $\Gamma_T$. So we obtain a period map $p\colon \sU^\circ_T \to \Gamma_T \bs \BB^k$.  Then $p$ descends to a period map $\Prd\colon\calM_T\to \Gamma_T\bs\BB^k$. Using Proposition \ref{proposition: arithmeticity general}, we can deduce the arithmeticity of $\Gamma_T$ if $d=3,4,6$.

\begin{prop}
\label{proposition: monodromy}
Assume $d=3,4,6$, the monodromy group $\Gamma_T$ is an arithmetic subgroup of $\PU(H^n(Y, \CC)_\chi, h)\cong \PU(1,k)$.
\end{prop}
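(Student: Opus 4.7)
The plan is to apply Proposition \ref{proposition: arithmeticity general} to the period map on the GIT quotient $\calM_T$. Take $\DD=\BB_T$ and $\Gamma=\PU(H^n(Y,\ZZ[\xi_d])_\chi,h)$. For the chosen $d\in\{3,4,6\}$, the field $\QQ[\xi_d]$ is imaginary quadratic and $\ZZ[\xi_d]$ is its ring of integers, so the $\QQ[\xi_d]$-valued Hermitian form $h$ on the free $\ZZ[\xi_d]$-lattice $H^n(Y,\ZZ[\xi_d])_\chi$ realizes $\Gamma$ as the unitary group of a Hermitian lattice, which is an arithmetic subgroup of $\PU(1,k)$ by Weil restriction of scalars from $\QQ[\xi_d]$ to $\QQ$. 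By construction, $\Gamma_T\subset\Gamma$, so it suffices to check that the hypotheses of Proposition \ref{proposition: arithmeticity general} are met.

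First I would choose $\calM$ to be a smooth Zariski-open subset of $\calM_T$ over which the cyclic covers form a nice family, and observe that the period map $\sU_T^\circ\to \Gamma_T\bs\BB_T$ is $G$-invariant and hence descends to an analytic map $\Prd\colon \calM\to \Gamma_T\bs\BB_T$ with monodromy representation $\rho\colon \pi_1(\calM)\to \Gamma_T\hookrightarrow \Gamma$. The descent is valid because every $g\in G=\SL(n_1+1)\times\cdots\times\SL(n_t+1)$ sends a divisor $D\in\sU_T^\circ$ to $gD$ via a biregular automorphism of $Z$, which lifts canonically to an isomorphism $Y\to gY$ of cyclic covers intertwining the $\mu_d$-actions and hence induces an isomorphism of polarized $\QQ[\xi_d]$-Hodge structures.

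The critical step is to verify that $\Prd$ is locally biholomorphic at a generic point. By Proposition \ref{proposition: stability}, a generic $D\in\sU_T^\circ$ is stable with finite stabilizer in $\Aut^\circ(Z)$, so Theorem \ref{theorem: local torelli} yields an exact sequence
\begin{equation*}
0\to H^0(Z,\calT_Z)\to T_D\sU_T\xrightarrow{dp}\Hom(H^{n,0}(Y),H^{n-1,1}_\chi(Y))\to 0,
\end{equation*}
exhibiting $dp$ as the quotient by the tangent to the $G$-orbit. Passing to $\calM$, the differential of $\Prd$ at $[D]$ is identified with an isomorphism from $T_{[D]}\calM$ onto $\Hom(H^{n,0}(Y),H^{n-1,1}_\chi(Y))$. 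Since the ball-type hypothesis in Proposition \ref{proposition: ball type} forces $H^{n-1,1}_\chi(Y)$ to be the full holomorphic tangent space of $\BB_T$ at the period point, the inverse function theorem shows that $\Prd$ is locally biholomorphic at a generic point. Proposition \ref{proposition: arithmeticity general} then yields $[\Gamma:\Gamma_T]<\infty$, so $\Gamma_T$ is arithmetic.

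The main obstacle I expect is handling the interplay between $\sU_T^\circ$, on which the monodromy and the lifted period map are naturally defined, and the GIT quotient $\calM_T$, on which the local biholomorphism property holds; care is needed to pick a smooth Zariski-open $\calM\subset\calM_T$ where the family and the period map descend, and to check that $\pi_1(\calM)$ still surjects onto $\Gamma_T$, which follows from the density of $\calM$ in $\calM_T$. A secondary bookkeeping issue is making the restriction-of-scalars argument precise enough to conclude that $\PU(H^n(Y,\ZZ[\xi_d])_\chi,h)$ is arithmetic in $\PU(1,k)$; this is standard exactly when $\QQ[\xi_d]$ is imaginary quadratic, i.e. for $d\in\{3,4,6\}$, which explains the restriction on $d$.
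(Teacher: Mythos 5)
Your proposal is correct and follows essentially the same route as the paper: identify $\Gamma=\PU(H^n(Y,\ZZ[\xi_d])_\chi,h)$ as an arithmetic subgroup of $\PU(1,k)$ (using that $\QQ[\xi_d]$ is imaginary quadratic precisely when $d=3,4,6$), observe the period map descends to $\calM_T$, use Theorem \ref{theorem: local torelli} together with Proposition \ref{proposition: stability} to get local biholomorphism at a generic point, and invoke Proposition \ref{proposition: arithmeticity general} to conclude $[\Gamma:\Gamma_T]<\infty$. The extra bookkeeping you flag (descent to a smooth open in $\calM_T$, surjectivity of $\pi_1$) is sound and matches what the paper leaves implicit.
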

\begin{proof}
Since $d=3,4,6$, we have $[\QQ[\xi_d]\colon \QQ]=2$. Thus the group $\Gamma=\PU(H^n(Y, \ZZ[\xi_d])_\chi, h)$ is an arithmetic subgroup of $\PU(H^n(Y, \CC)_\chi, h)$. Then we have analytic maps $\calM_T\to \Gamma_T\bs \BB^k\to \Gamma\bs \BB^k$. Theorem \ref{theorem: local torelli} implies that the tangent map of $\calM_T\to \Gamma_T \bs \BB^k$ at a generic point in $\calM_T$ is an isomorphism. By Proposition \ref{proposition: arithmeticity general}, $\Gamma_T$ has finite index in $\Gamma$ and hence also an arithmetic subgroup of $\PU(1,k)$.
\end{proof}

The Global Torelli for Calabi--Yau varieties in ball-type cases is not known in general. One step would be to study the degree of maps $\calM\to \Gamma_T\bs \BB$. We make the following conjecture: 
\begin{conj}
Let $Z$ be a product of projective spaces, and $(Z,d,L,T)$ a tuple satisfying Condition \ref{condition}. Suppose $\BB_T$ is of ball type, then any elements in $\sU_T$ are stable, and the period map $\Prd\colon \calM_T \to \Gamma_T\bs \BB$ is an open embedding.
\end{conj}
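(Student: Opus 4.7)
The claim splits into three parts: (a) every $D \in \sU_T$ is GIT-stable under $G = \prod_i \SL(n_i+1)$; (b) the period map $\Prd$ is étale on $\calM_T$; (c) $\Prd$ is injective. Step (c) is a global Torelli statement and will be the main obstacle.

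For (a), I would apply the Hilbert-Mumford numerical criterion. Given a non-trivial one-parameter subgroup $\lambda$ of $G$, diagonalize it in suitable coordinates and compute the weight $\mu(D, \lambda)$ for $D = D_1 + \cdots + D_m$ viewed in $\PP H^0(Z, L^d)$ via the Segre embedding. The SNC condition yields a key combinatorial constraint: the total multiplicity of $D$ along any $\lambda$-invariant coordinate subspace $W \subset Z$ is bounded by $\mathrm{codim}\, W$, since transversality of the smooth components forbids higher-order concentration. Combined with the positivity $-K_Z = (d-1)L$, this forces $\mu(D, \lambda) < 0$ for every such $\lambda$. The finiteness of the stabilizer then follows from $H^0(Z, \calT_Z(-\log D)) = 0$, a consequence of a logarithmic vanishing theorem on a product of projective spaces with the given positivity.

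For (b), once (a) is established, the exact sequence of Theorem \ref{theorem: local torelli} descends to a tangent isomorphism $T_{[D]} \calM_T \cong \Hom(H^{n,0}(Y), H^{n-1,1}_\chi(Y))$ at every $[D] \in \calM_T$, which coincides with $T_{\Prd(D)}(\Gamma_T \bs \BB)$ by the ball-type assumption. Since both sides have the same dimension $h^{n-1,1}_\chi$, the map $\Prd$ is étale everywhere, in particular an open map.

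The hard step is (c). My plan is a two-layered reduction. The base case is $n = 2$, where $Y$ is a K3 surface with non-symplectic $\mu_3$-automorphism; injectivity of $\Prd$ then follows from the classical Torelli theorem for K3 surfaces together with the lattice-theoretic analysis of the $\chi$-eigenspace of $H^2(Y, \ZZ[\xi_3])$, in the spirit of Dolgachev-Kondo. Inductively, for any ball-type $T$ obtained as a Fermat-type half-twist (Definition \ref{definition: fermat half twist}), Proposition \ref{proposition: diagonal quotient} realizes the cover as $\widetilde{Y} = (Y \times E)/\langle \rho_1 \times \rho_2^{-1}\rangle$ with $E$ the $\mu_3$-equivariant elliptic curve (which is rigid), and the canonical Hodge isomorphism $H^{n+1}_{\widetilde{\chi}}(\widetilde{Y}) \cong H^n_{\chi_1}(Y)$ transports injectivity from $Y$ to $\widetilde{Y}$, since the branch divisor of $\widetilde{Y}$ determines and is determined by the branch divisor of $Y$. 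For the finitely many maximal non-half-twist cases (the four in dimension $3$ and the one in dimension $4$ from Theorem \ref{theorem: main}), I would appeal to the commensurability with Deligne-Mostow ball quotients established in \S\ref{section: relation to DM} and \S\ref{section: 3 DM} (cf.\ Corollary \ref{corollary: comm}), matching $\calM_T$ with open subvarieties of the corresponding Deligne-Mostow moduli, where injectivity of the period map is classical. The principal obstacle is that commensurability of the ambient ball quotients does not automatically promote to an identification of marked moduli; one must check that the geometric construction of $Y$ from the branched cover of $(\PP^1)^n$ matches the Deligne-Mostow data on the nose, which requires comparing monodromy representations and identifying the arithmetic lattices inside $\PU(1, k)$ rather than merely up to commensurability.
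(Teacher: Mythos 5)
The statement you are trying to prove is stated in the paper as a \emph{conjecture}: the authors explicitly say that global Torelli for these Calabi--Yau families ``is not known in general,'' prove only the infinitesimal statement (Theorem \ref{theorem: local torelli}) and generic stability (Proposition \ref{proposition: stability}), and cite \cite{sheng2019global} for special cases. So there is no proof in the paper to compare against, and your proposal should be judged as an attempt at an open problem. As such it contains genuine gaps in all three steps. For (a), the paper only establishes stability of a \emph{generic} point of $\PP_T$ (by degenerating to a union of hyperplane pullbacks), and upgrading this to every SNC divisor of type $T$ is exactly part of what the conjecture asserts; your Hilbert--Mumford sketch rests on an unproved combinatorial claim (``total multiplicity along any $\lambda$-invariant coordinate subspace $W$ is bounded by $\operatorname{codim} W$'') which, even if true, does not by itself compute the sign of $\mu(D,\lambda)$ for the Segre-embedded point of $\PP H^0(Z,L^d)$ --- the weight depends on the full Newton polytope of the defining equation, not just on multiplicities along invariant subspaces. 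Step (b) is fine modulo (a).

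The decisive gap is (c). Commensurability of $\Gamma_T$ with a Deligne--Mostow lattice (Corollaries \ref{corollary: comm 2} and \ref{corollary: comm}) identifies two ball quotients only up to passing to finite covers of each; it gives no map between $\calM_T$ and the Deligne--Mostow moduli space, let alone one compatible with the period maps, so injectivity cannot be ``transported'' this way --- a point you concede yourself, which means this branch of the argument is not a proof but a restatement of the difficulty. The $n=2$ base case likewise requires, for each of the partition types, a lattice-theoretic Torelli argument in the style of Kond\=o that you assert rather than carry out (one must show that the $\mu_3$-equivariant isomorphism of eigenspaces extends to a Hodge isometry of the full $K3$ lattice preserving the relevant sublattices, which is where such proofs typically succeed or fail). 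The half-twist induction step is the soundest part of the plan, since $H^{n+1}_{\widetilde\chi}(\widetilde Y)\cong H^n_{\chi_1}(Y)$ and the three Fermat branch points have no moduli modulo $\PGL_2$; but it only reduces the problem to the base cases, which remain open. In short, the proposal is a reasonable research program, not a proof, and the statement remains a conjecture.
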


Special cases were studied in \cite{sheng2019global} via relating the monodromy groups to Deligne--Mostow theory.

\section{Relations to Deligne--Mostow Theory: Dimension $n=2$}
	\label{section: relation to DM}
	In \cite{kondo2000complex}, Kond\=o studied the case of ball quotient given by $(Z,d,L)=((\PP^1)^2, 3, \calO(1)^{\boxtimes 2})$. Since a generic curve of genus four can be realized as a curve in $(\PP^1)^2$ of degree $(3, 3)$, the cyclic cover construction equips the moduli of curves of genus four with a ball-quotient structure. From the analysis of the transcendental lattice of the corresponding $K3$ surfaces, Kond\=o proved that this ball quotient is commensurable with the ball quotient given by Deligne--Mostow \cite{deligne1986monodromy} with type $$\mu=({1\over 6},{1\over 6},{1\over 6},{1\over 6},{1\over 6},{1\over 6},{1\over 6},{1\over 6},{1\over 6},{1\over 6},{1\over 6},{1\over 6}).$$

In this section we will study commensurable relations between arithmetic lattices in Deligne--Mostow theory arising from degenerations of Kond\=o's construction. As a byproduct, we give another proof of Kond\={o}'s result. The methods will be extended in \S\ref{section: 3 DM} to study commensurable relations among arithmetic subgroups arising from the case $Z=(\PP^1)^3$.

\subsection{Monodromy of Elliptic Fibrations}
\label{section: monodromy of Kondo}
In this section, we calculate the monodromy of elliptic fibrations arising from degenerations of Kond\=o's example. We first consider the case of type $(3,1)+(0,2)$, where all types of singular fibers in other degenerations arise. Let $C_1\in |(3, 1)|$ and $C_2\in |(0,2)|$ be two generic curves in $(\PP^1)^2=P_1\times P_2$. We denote the projection $P_1\times P_2\to P_i$ by $p_i$. The induced projections from $C_j$ and $S$ to $P_i$ are also denoted by $p_i$. We demonstrate the curves $C_1$ and $C_2$ (in a generic way) in Figure \ref{figure: configuration}.
\begin{figure}[htp]
    \centering
    \includegraphics[width=4.5cm]{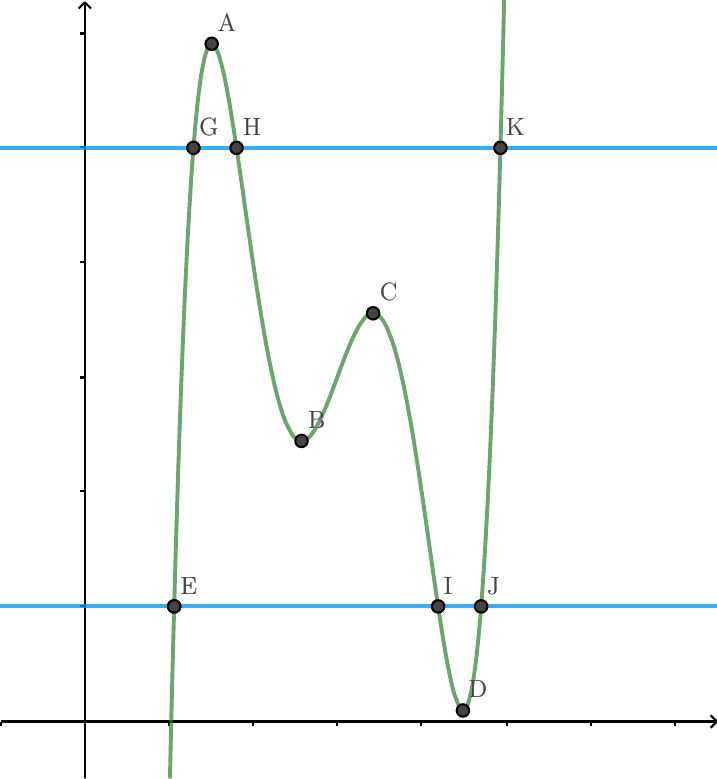}
    \caption{$(3,1)+(0,2)$}
    \label{figure: configuration}
\end{figure}
The intersection of $C_1$ and $C_2$ are $G, H, K, E, I, J$, and their projections by $p_1$ form $6$ distinct points $a_1, a_2, \cdots, a_6$ on $P_1$. On the other hand, the projection $p_2\colon C_1\to P_2$ has $4$ ramification points $A, B, C, D$ and their projections by $p_2$ form $4$ points $b_1, \cdots, b_4$ on $P_2$. The projection $p_2(C_2)$ contains two points $c_1, c_2$. Points $b_1, \cdots, b_4, c_1, c_2$ are distinct.

The triple cover of $P_1\times P_2$ is a $K3$ surface $S$ with $6$ du Val singularities of type $A_2$ at the preimage of $G, H, K, E, I, J$. Resolution of these singularities yields six chains of exceptional $\PP^1$'s with length $2$. 

\begin{prop}
\label{proposition: isotrivial elliptic fibrations and Kodaira types}
The smooth resolution of $S$ is a smooth $K3$ surface $\widetilde{S}$ with two isotrivial elliptic fibrations $p_1\colon \tilde{S}\to P_1$ and $p_2\colon \widetilde{S}\to P_2$. 
\begin{enumerate}
    \item The elliptic fibration $p_1\colon \widetilde{S}\to P_1$ has $6$ singular fibers over $a_1, \cdots, a_6$ of Kodaira type $\IV$.
    \item The elliptic fibration $p_2\colon \widetilde{S}\to P_2$ has $6$ singular fibers over $b_1, \cdots, b_4$, $c_1$, $c_2$. The four singular fibers over $b_1, \cdots, b_4$ are of type II, and the two singular fibers over $c_1, c_2$ are of type $\IV^*$.
\end{enumerate} 
    
\end{prop}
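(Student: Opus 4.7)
The plan is to treat each fibration separately by first describing the generic fiber, then analyzing the singular ones via local calculations at the branch locus. Since $C_2 \in |\calO(0,2)|$ splits as the two horizontal lines $P_1\times\{c_1\}, P_1\times\{c_2\}$ and $p_1|_{C_1}\colon C_1 \to P_1$ is an isomorphism, a generic vertical line $\{x\}\times P_2$ meets $C_1$ transversally in one point and $C_2$ transversally in the two points $(x,c_1), (x,c_2)$; thus the fiber of $S$ over $x$ is the triple cover of $\PP^1$ branched at three distinct points, namely the Fermat elliptic curve $E$ with $j$-invariant $0$. The analogous argument for $p_2$ uses that a generic horizontal line meets $C_1$ transversally in three points and is disjoint from $C_2$. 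Consequently both fibrations are isotrivial with smooth fiber $E$.

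Singular fibers of $p_1$ occur precisely over $a_1,\dots,a_6$. Near an intersection point $G = (a_1, c_1) \in C_1 \cap C_2$, choose local coordinates with $C_1 = \{t = \phi(x)\}$ ($\phi(0)=0$, $\phi'(0)\neq 0$) and $C_2 = \{t=0\}$; then $S$ locally reads $y^3 = t(t-\phi(x))$, which after $u = t,\ v = t - \phi(x)$ becomes the standard $A_2$-singularity $y^3 = uv$. The restriction to $\{x = 0\}$ is the cuspidal curve $\{y^3 = t^2\}$. Resolving the $A_2$ in the $y$-chart $(u,v,y) = (yu', yv', y)$ of the blow-up of $\CC^3$, the strict transform of $S$ is the smooth surface $\{y = u'v'\}$, and its intersection with the exceptional locus $\{y=0\}$ consists of the two $(-2)$-curves $\{u'=0\}$ and $\{v'=0\}$ meeting at the origin; the strict transform of the cuspidal fiber is the smooth parametric curve $v' = u',\ y = (u')^2$, which also passes through the origin with a tangent direction distinct from those of the two exceptional lines. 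Thus the fiber on $\widetilde{S}$ consists of three $\PP^1$'s pairwise transverse at a common point, which is Kodaira type $\IV$.

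Singular fibers of $p_2$ split into two cases. Near a ramification point $A \in C_1$ of $p_2|_{C_1}$ lying over $b_j$, the surface $S$ is smooth since $A \notin C_2$; choosing $t$ vanishing at $b_j$ and $C_1 = \{t = x^2 + \cdots\}$, the fiber has local equation $y^3 = -x^2 g(x,0)$ with $g(0,0) \neq 0$, so it is an irreducible cuspidal rational curve on a smooth surface. Since $\widetilde{S}$ is $K3$, hence relatively minimal, this curve is already in Kodaira form, giving type $\mathrm{II}$. At $c_1$ (the case $c_2$ being identical), the whole line $P_1\times\{c_1\}$ lies in $C_2$, so $y^3 = 0$ scheme-theoretically forces the fiber of $S$ to be $3L$ where $L = \{y=0,\ t=c_1\} \cong \PP^1$ is a rational curve passing through the three $A_2$-singularities $G, H, K \in C_1\cap C_2$ over $c_1$. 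Resolving each $A_2$ gives a chain $E_{i,1} - E_{i,2}$ of two $(-2)$-curves, and the smoothness of $L$ at each singular point forces $\widetilde{L}$ to meet exactly one end of each chain transversally. Applying the standard $A_2$-pullback formula $\pi^* L = \widetilde{L} + \tfrac{2}{3} E_{i,1} + \tfrac{1}{3} E_{i,2}$ at each singularity, the fiber divisor on $\widetilde{S}$ becomes
\[
p_2^*(c_1) = 3\widetilde{L} + \sum_{i \in \{G,H,K\}} \bigl( 2 E_{i,1} + E_{i,2} \bigr),
\]
whose dual graph is $\widetilde{E}_6$ with multiplicities exactly those of Kodaira type $\IV^*$.

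The main obstacle is the combinatorial verification in the $\IV^*$ case, namely showing that the strict transform $\widetilde{L}$ meets exactly one end-vertex of each $A_2$-chain and that the three chains from $G, H, K$ remain disjoint in $\widetilde{S}$. Transversality can be read off from the explicit $A_2$-resolution since $L$ is smooth at each singular point of $S$, while disjointness is set-theoretic since $G, H, K$ are three distinct points. A useful sanity check is the Euler-number tally: the fibration $p_1$ yields $6\cdot e(\IV) = 24$ and $p_2$ yields $4\cdot e(\mathrm{II}) + 2\cdot e(\IV^*) = 8 + 16 = 24$, both matching $e(\widetilde{S}) = 24$ for a $K3$ surface.
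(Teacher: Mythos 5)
Your proof is correct and follows essentially the same strategy as the paper: identify the cuspidal fibers of $S$ over the branch points, and track what happens to the fibers through the $A_2$ resolution at the six intersection points $C_1\cap C_2$, matching the result against Kodaira's table. You fill in the explicit local-coordinate computations (the $y$-chart blow-up of $\{y^3 = uv\}$, the $A_2$ pullback formula, and the Euler-number check) that the paper leaves to the reference \cite[Table 3, \S V.10]{barth2015compact}.
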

\begin{proof}
The fiber of $p_1\colon S\to P_1$ over $a_i\in P_1$ is a rational curve with a cusp. The fiber of $p_1\colon \widetilde{S}\to P_1$ over $a_i$ is a chain of $\PP^1$'s with dual graph $\widetilde{A}_2$ 
\begin{equation*}
\dynkin[extended, edge length=0.8cm]A{2}
\end{equation*}
while the three $\PP^1$'s intersecting at one point.

The fiber of $p_2\colon S\to P_2$ over $b_i\in P_2$ is a rational curve with a cusp. The resolution of $S$ does not affect these four fibers. Namely, the fiber of $p_2\colon \widetilde{S}\to P_2$ over $b_i$ is still the rational curve with a cusp.

The fiber of $p_2\colon S\to P_2$ over $c_i\in P_2$ is a $\PP^1$. The fiber of $p_2\colon \widetilde{S}\to P_2$ over $c_i$ is a chain of $\PP^1$'s with dual graph $\widetilde{E}_6$
\begin{equation*}
\dynkin [extended, edge length=0.5cm]E6.
\end{equation*}

See \cite[Table 3, \S V.10]{barth2015compact} for the type of these singular fibers.  
\end{proof}

\begin{cor}
\label{corollary: monodromy around singular fibers}
   \begin{enumerate}
       \item The $\chi$-eigensheaf $\LL_1=R^1{p_1}_*(\QQ[\omega])_\chi|_{P_1-\{a_1,\cdots, a_6\}}$ is a complex rank-one local system with monodromy of multiplication with $\omega=\exp({2\pi\sqrt{-1}\over 3})$ around each $a_i$. 
       \item The $\chi$-eigensheaf $\LL_2=R^1{p_2}_*(\QQ[\omega])_\chi|_{P_2-\{b_1,\cdots, b_4, c_1, c_2\}}$ is a complex rank-one local system with monodromy of multiplication with $-\omega^2=\exp({\pi\sqrt{-1}\over 3})$ around each $b_j$, and multiplication with $\omega^2=\exp({4\pi\sqrt{-1}\over 3})$ around each $c_k$.
   \end{enumerate} 
\end{cor}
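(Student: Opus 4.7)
The plan is to establish the rank-one property globally and then identify each local monodromy scalar by combining commutativity with the $\mu_3$-deck action, the Kodaira classification of the singular fibers, and a local Picard-Lefschetz/Deligne-Mostow computation.

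First, for any $q$ outside the critical locus, the fiber $p_i^{-1}(q)$ is a smooth triple cover of $\PP^1$ branched at three distinct points each with weight $1/3$, hence isomorphic to the Fermat elliptic curve $E_\omega$ together with its standard order-three automorphism $\rho$. The CM decomposition $H^1(E_\omega,\QQ[\omega])=H^1_\chi\oplus H^1_{\overline{\chi}}$ has both summands of $\QQ[\omega]$-dimension one, so $\LL_1$ and $\LL_2$ are rank-one local systems. Moreover the local Gauss-Manin monodromy $T_q$ around any critical value commutes with the globally defined $\rho$-action, so $T_q$ preserves the $\chi$-eigenspace and acts there by a scalar equal to one of its eigenvalues on $H^1(E_\omega,\CC)$. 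By Proposition \ref{proposition: isotrivial elliptic fibrations and Kodaira types}, the Kodaira types that occur are $\IV$, $\IV^*$, and $II$, all of finite monodromy order, with possible eigenvalues $\{\omega,\omega^2\}$ for $\IV$ and $\IV^*$ and $\{e^{\pm i\pi/3}\}$ for $II$.

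To select the correct eigenvalue at each puncture, I would use a local analysis of the branched cover. For a type $\IV$ fiber at $a_i$, the two colliding branch points each carry weight $1/3$, so the Deligne-Mostow collision formula \cite{deligne1986monodromy} yields a primitive cube root of unity on $H^1_\chi$; with the orientation convention compatible with the choice of $\chi$ (cf.\ \cite[Lemma 1.2]{arapura2014hodge}) this scalar is $\omega$. For a type $\IV^*$ fiber at $c_k$, the local model $t^3=r$ realizes the monodromy as the inverse of the type $\IV$ case (type $\IV^*$ is Poincar\'e dual to $\IV$ at the level of local monodromy), giving $\omega^{-1}=\omega^2$. For a type $II$ fiber at $b_j$, the local model $t^3=u^2-(r-b_j)$ near the simple ramification point $B\in C_1$ shows that the two branch points $u=\pm\sqrt{r-b_j}$ undergo a \emph{half}-twist as $r$ loops once around $b_j$. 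The square of this half-twist is a full collision of two weight-$1/3$ branch points, so by the previous computation $T_{b_j}^2=\omega$ on $\LL_2$; combined with the fact that type $II$ monodromy has order exactly six, this forces $T_{b_j}=-\omega^2=e^{i\pi/3}$, the unique primitive sixth root of unity squaring to $\omega$.

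The main obstacle is the careful tracking of orientations needed to distinguish $\omega$ from $\omega^2$ (and $-\omega^2$ from $-\omega$) in the local computations; this can be handled by fixing once and for all the orientation convention used in the Deligne-Mostow formula. As a consistency check, the product of all local monodromies on each $P_i\cong\PP^1$ must be trivial: for $\LL_1$ we have $\omega^6=1$, and for $\LL_2$ we have $(-\omega^2)^4\cdot(\omega^2)^2=\omega^{12}=1$, both of which hold.
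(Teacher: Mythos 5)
Your approach is genuinely different from the paper's and the overall strategy is sound, but there are real gaps in the sign determination that prevent the proof from being complete.

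\textbf{Comparison with the paper's route.} The paper settles each local monodromy scalar by constructing an explicit local model of the elliptic surface as a minimal resolution of $(E\times D)/\mu_3$, where $E=\CC/(\ZZ+\ZZ\omega)$ and the $\mu_3$-action is chosen to reproduce Kodaira type $\IV$, $\IV^*$, or $\mathrm{II}$ (following \cite[Table~6, \S V.10]{barth2015compact}). The monodromy on $H^1(E,\CC)_\chi=H^{1,0}(E)=\CC\,dz$ is then read off directly from the explicit generator of $\mu_3$. This avoids all orientation bookkeeping. Your route instead invokes the constraint that the monodromy commutes with the $\mu_3$-deck action (so it is a scalar on the $\chi$-line, and one of the eigenvalues of the Kodaira monodromy matrix) and then tries to pick the correct eigenvalue from Deligne--Mostow braid-group considerations. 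The commutation argument is a nice observation and the Kodaira-type constraint is correct, but it only narrows the answer to two possibilities for each type.

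\textbf{Where the gap is.} For type $\IV$ you assert that the Deligne--Mostow collision formula yields $\omega$ rather than $\omega^2$ ``with the orientation convention compatible with the choice of $\chi$,'' but you never carry out the matching of conventions, which is precisely the nontrivial content: the set of eigenvalues $\{\omega,\omega^2\}$ is invariant under complex conjugation, and replacing $\chi$ by $\overline{\chi}$ swaps the two answers. Your consistency check (the product of all local monodromies is $1$) also passes after globally swapping $\omega\leftrightarrow\omega^2$, so it does not pin down the sign. Your argument for type $\mathrm{II}$ is a clean reduction: once $T_{b_j}^2 = (\text{type IV scalar})$ is known, the requirement that $T_{b_j}$ be a primitive sixth root of unity forces $T_{b_j}=e^{i\pi/3}=-\omega^2$. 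But this inherits the unresolved sign from the type $\IV$ computation. For type $\IV^*$, the ``Poincar\'e dual'' heuristic is not an argument here: at $c_k$ the fiber $P_1\times\{c_k\}$ is an entire component of the branch divisor and the scheme-theoretic fiber of $p_2$ over $c_k$ is non-reduced ($\{t^3=0\}$), so this degeneration is not obtained by inverting a collision of branch points, and the assertion $\omega^{-1}=\omega^2$ needs a genuine local computation analogous to the one the paper gives ($\omega\cdot(z,t)=(\omega^2 z,\omega t)$).

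\textbf{Summary.} Your proposal gives a defensible alternative framework via Deligne--Mostow braid/collision monodromy, and the order-of-monodromy arguments and consistency checks are a useful sanity net. But to actually distinguish $\omega$ from $\omega^2$ you need either an explicit local model (as the paper does) or a precise match with a stated Deligne--Mostow sign convention, and the $\IV^*$ case requires its own local analysis rather than the duality shortcut.
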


\begin{proof}
This directly follows from \cite[\S V.10]{barth2015compact}, especially Table 6. We explain this in a concrete way. 

Let $E=\CC/(\ZZ+\ZZ\omega)$ be the elliptic curve with j-invariant zero. Let $D$ be the unit disk in $\CC$. Consider the action of $\mu_3$ on $E\times D$ with $\omega(z,t)=(\omega z, \omega t)$. Let $S$ be the minimal resolution of the quotient of $E\times D$ by $\mu_3$. Then the fibration 
\begin{equation*}
S\to D/\mu_3
\end{equation*}
has fiber of type $\IV$ over $0$. The monodromy on $E$ induced by a counter-clockwise loop in $D/\mu_3$ is given by $g\colon z\mapsto \omega^{-1} z$. The space $H^1(E,\CC)_\chi=H^{1,0}(E)$ is a complex line generated by $dz$, and the induced monodromy on $H^1(E,\CC)_\chi$ is given by multiplication with $\omega$.

If we change the action to be $\omega(z,t)=(\omega^2 z, \omega t)$, then the central fiber of $S\to D/\mu_3$ is of type $\IV^*$. In this case, the monodromy on $H^1(E,\CC)_\chi$ is given by multiplication with $\omega^2$.

If we change the action to be $\omega(z,t)=(-\omega^2 z, -\omega^2 t)$, then the central fiber of $S\to D/\mu_3$ is of type II. In this case, the monodromy on $H^1(E,\CC)_\chi$ is given by multiplication with $-\omega^2$.
\end{proof}

From Corollary \ref{corollary: monodromy around singular fibers}, the local system $\LL_1$ on $P_1-\{a_1,\cdots, a_6\}$ corresponds to the Deligne--Mostow data 
\begin{equation*}
\alpha_1=({1\over 3}, {1\over 3},{1\over 3},{1\over 3},{1\over 3},{1\over 3}).
\end{equation*}
and the local system $\LL_2$ over $P_2-\{b_1, \cdots, b_4, c_1, c_2\}$ corresponds to the Deligne--Mostow data 
\begin{equation*}
\alpha_2=({2\over 3}, {2\over 3}, {1\over 6}, {1\over 6}, {1\over 6}, {1\over 6}).
\end{equation*}

For other kinds of degenerations, similar argument gives rise to the following table:

\begin{longtable}
{|p{.30\textwidth} | p{.20\textwidth} | p{.20\textwidth} |}
\hline
Partition Type & DM Data $6\alpha_1$ & DM $6\alpha_2$ \\\hline
$(3,3)$ & $(1^{12})$ & $(1^{12})$ \\\hline

$(3,2)+(0,1)$ & $(1^6,2^3)$ & $(1^8,4)$ \\\hline
$(3,1)+(0,2)$ & $(2^6)$ & $(1^4,4^2)$ \\\hline

$(2,2)+(1,1)$ & $(1^4,2^4)$ & $(1^4,2^4)$ \\\hline

$(2,2)+(1,0)+(0,1)$ & $(1^4,2^2,4)$ & $(1^4,2^2,4)$ \\\hline

$(2,1)+(1,2)$ & $(1^2,2^5)$ & $(1^2,2^5)$ \\\hline

$(2,1)+(1,1)+(0,1)$ & $(2^6)$ & $(1^2,2^3,4)$ \\\hline

$(2,1)+(1,0)+(0,2)$ & $(2^4,4)$ & $(1^2,2,4^2)$ \\\hline

$(2,0)+(1,1)+(0,2)$ & $(2^2,4^2)$ & $(2^2,4^2)$ \\\hline

$(1,1)+(1,1)+(1,1)$ & $(2^6)$ & $(2^6)$ \\\hline

$(1,1)+(1,1)+(1,0)+(0,1)$ & $(2^4,4)$ & $(2^4,4)$ \\\hline

$(3,0)+(0,3)$ & $(4^3)$ & $(4^3)$ \\\hline
\caption{Deligne--Mostow Data in Two-Dimensional Cases}
\label{table: n=2}
\end{longtable}

\subsection{Non-SNC Cases}
In this section, we calculate the monodromy of fibration when the divisors are not simple normal crossing. We call these non-SNC cases. We start with the example from the case of type $(3,1)+(0,2)$ and use the same notation and picture in \S\ref{section: monodromy of Kondo}. The curve $C_2$ is the union of two lines. Consider the moduli of $C_1$ and $C_2$ such that one of the lines in $C_2$ is tangent to curve $C_1$. In terms of points $a_1,a_2,\cdots, a_6$, this corresponds to colliding of two of them. In the other projection, a point $b_i$ collides with certain $c_j$. A generic surface $S$ with such type of $C_1$ and $C_2$ has two elliptic fibrations over $P_1$ and $P_2$, both with five singular fibers. The monodromy data of the two elliptic fibrations around those five points are $\alpha_1=({2\over 3}, {1\over 3}, {1\over 3}, {1\over 3}, {1\over 3})$ and $\alpha_2=({5\over 6},{2\over 3}, {1\over 6}, {1\over 6}, {1\over 6})$.

Similarly, consider the type $(3,2)+(0,1)$ with $C_1, C_2$ tangent at a point. The corresponding monodromy data are $\alpha_1=({2\over 3},{1\over 3},{1\over 6}, {1\over 6}, {1\over 6},{1\over 6}, {1\over 6}, {1\over 6})$ and $\alpha_2=({5\over 6},{1\over 6},{1\over 6}, {1\over 6}, {1\over 6},{1\over 6}, {1\over 6}, {1\over 6})$.

%Consider the type $(2,2)+(1,1)$ and the second component being tangent to the first component. The corresponding monodromy data are $\alpha_1=\alpha_2=({2\over 3}, {1\over 3},{1\over 3}, {1\over 6},{1\over 6}, {1\over 6}, {1\over 6})$.

We collect all pairs of monodromy data from non-SNC cases in the following table.
\begin{longtable}
{|p{.50\textwidth} | p{.20\textwidth} | p{.20\textwidth} |}
\hline
Degeneration Type & DM Data $6\alpha_1$ & DM $6\alpha_2$ \\\hline

$C_1:(3,2)$, $C_2:(0,1)$. $C_1$ and $C_2$ are tangent. & $(1^6,2,4)$ & $(1^7,5)$ \\\hline
$C_1: (3,2), C_2: (0,1)$, $C_1$ has one node & $(1^4,2^4)$ & $(1^6,2,4)$ \\\hline
$C_1:(3,2),C_2:(0,1)$. $C_1$ has one node. $C_1$ and $C_2$ are tangent. & $(1^4,2^2,4)$ & $(1^5,2,5)$ \\\hline
$C_1: (3,2), C_2: (0,1)$. $C_1$ has two nodes. & $(1^2,2^5)$ & $(1^4,2^2,4)$ \\\hline
$C_1: (3,2), C_2: (0,1)$. $C_1$ has two nodes. $C_1$ and $C_2$ are tangent. & $(1^2,2^3,4)$ & $(1^3,2^2,5)$ \\\hline
$C_1:(3,2), C_2:(0,1)$. $C_1$ has three nodes. & $(2^6)$ & $(1^2,2^3,4)$ \\\hline
$C_1:(3,2), C_2:(0,1)$. $C_1$ has three nodes. $C_1$ and $C_2$ are tangent. & $(2^4,4)$ & $(1,2^3,5)$ \\\hline
$C_1:(3,1), C_2:(0,1), C_3:(0,1)$. $C_1$ and $C_2$ are tangent. & $(2^4,4)$ & $(1^3,4,5)$ \\\hline
$C_1:(3,1), C_2:(0,1), C_3:(0,1)$. $C_1$ and $C_2$ are tangent. $C_1$ and $C_3$ are tangent. & $(2^2,4^2)$ & $(1^2,5^2)$ \\\hline
$C_1:(2,2), C_2:(1,0), C_3:(0,1)$. $C_1$ and $C_2$ are tangent. & $(1^3,2^2,5)$ & $(1^4,4^2)$ \\\hline
$C_1:(2,2), C_2:(1,0), C_3:(0,1)$. $C_1$ has one node. $C_1$ and $C_2$ are tangent. & $(1,2^3,5)$ & $(1^2,2,4^2)$ \\\hline
$C_1:(2,1), C_2:(1,1), C_3:(0,1)$. $C_1$ and $C_3$ are tangent & $(2^4,4)$ & $(1,2^3,5)$ \\\hline
$C_1:(2,1), C_2:(1,1), C_3:(0,1)$. $C_1$ and $C_2$ are tangent & $(2^4,4)$ & $(1^2,2,4^2)$ \\\hline
$C_1:(2,1), C_2:(1,0),C_3:(0,1),C_4:(0,1)$. $C_1$ and $C_3$ are tangent & $(2^2,4^2)$ & $(1,2,4,5)$ \\\hline
\caption{Deligne--Mostow Data in Two-Dimensional and Non-SNC Cases}
\label{table: nonnsc}
\end{longtable}

\subsection{Identification of $\QQ[\omega]$-Hodge Structures of Ball Type}

Suppose that we are given an elliptic fibration $\pi\colon S\to \PP^1$ such that $S$ is a $K3$ surface, each fiber has zero j-invariant, and $\LL\coloneqq R^1\pi_*\QQ[\omega]$ corresponds to certain Deligne--Mostow data $\alpha=(\alpha_1, \cdots, \alpha_k)$. There is a non-symplectic action of $\mu_3$ on $S$. We have the $\chi$-characteristic subspace $H^2(S, \QQ[\omega])_\chi$. There is a Hermitian pairing with signature $(1,k-3)$ on $H^2(S, \QQ[\omega])_\chi$ and Hodge filtration given by $F^1=H^0(S, K_S)\subset H^2(S, \CC)_\chi=F^0$. 

On the other hand, denote by $e=3 \text{ or }6$ the common denominator of $\alpha_i$ and consider $C$ the Deligne--Mostow cyclic covering of $\PP^1$ given by the affine equation
\begin{equation*}
y^e=\prod_i (x-x_i)^{e\alpha_i}.
\end{equation*}
By Deligne--Mostow theory, for the tautological character $\psi\colon \mu_e\hookrightarrow\CC^{\times}$, the characteristic subspace $H^1(C, \QQ[\omega])_{\overline{\psi}}$ has a Hermitian pairing of signature $(1, k-3)$ with Hodge filtration given by $F^1=H^0(C, K_C)_{\overline{\psi}}\subset H^1(C, \CC)_{\overline{\psi}}=F^0$. 

Let $A=\{a_1,\cdots, a_k\}$ be the branch locus of $\pi\colon S\to \PP^1$. Define $\LL=(R^1\pi_*\QQ[\omega])_\chi$. By \cite[\S 2.23]{deligne1986monodromy}, we have an isomorphism 
\begin{equation}
\label{isomorphism deligne mostow}
H^1(\PP^1-A, \LL)\cong H^1(C, \QQ[\omega])_{\overline{\psi}} 
\end{equation}
of $\QQ[\omega]$-Hodge structures of ball type. 
\begin{prop}
\label{proposition: kondo to Deligne--Mostow}
There is a natural isomorphism 
\begin{equation*}
	H^2(S, \QQ[\omega])_\chi\cong H^1(\PP^1-A, \LL)
\end{equation*}	
between $\QQ[\omega]$-Hodge structures of ball type.
\end{prop}
\begin{proof}
Let $U=p^{-1}(\PP^1-A)$. Denote $j\colon \PP^1-A \to \PP^1$ the inclusion map. Then the natural morphism $g\colon \LL\to j_* j^* \LL$ is an isomorphism over $\PP^1-A$. So the kernel of $g$ is supported on $A$. Since the monodromy of $j^* \LL$ around each $a_i\in A$ is nontrivial, the stalks of $j_* j^* \LL$ at $a_i$ are zero. So the morphism $g$ is surjective over $\PP^1$. The long exact sequence associated with 
\begin{equation*}
0\to \mathrm{Ker}(g)\to \LL\to j_* j^*\LL\to 0 
\end{equation*}
implies that $j^*\colon H^1(\PP^1, \LL) \to H^1(\PP^1-A, \LL)$ is an isomorphism. 

Consider the Leray spectral sequence
\begin{equation*}
E^{p,q}=H^p(\PP^1, R^q \pi_*\QQ[\omega]) \implies H^{p+q}(S, \QQ[\omega]).
\end{equation*}
Since $\mu_3$ operates on the fibers, we have
\begin{equation*}
E^{p,q}_\chi=H^p(\PP^1, (R^q\pi_*\QQ[\omega])_\chi)\implies H^{p+q}(S, \QQ[\omega])_\chi.
\end{equation*}
The $\chi$-eigensubsheaves $(R^0\pi_*\QQ[\omega])_\chi$ and $(R^2\pi_*\QQ[\omega])_\chi$ are apparently zero. Taking $p+q=2$, we obtain $H^2(S, \QQ[\omega])_\chi\cong H^1(\PP^1, \LL)$. Similarly, we have $H^2(U, \QQ[\omega])_\chi\cong H^1(\PP^1-A, \LL)$ and $H_c^2(U, \QQ[\omega])_\chi\cong H_c^1(\PP^1-A, \LL)$. So the four items in the commutative diagram
\begin{equation}
	\label{diagram: cohomology of K3 to local system}
	\begin{tikzcd}
	H^2(S, \QQ[\omega])_\chi \arrow{r}\arrow{d} & H^2(U, \QQ[\omega])_\chi \arrow{d}\\
	H^1(\PP^1, \LL) \arrow{r} & H^1(\PP^1-A, \LL)
	\end{tikzcd}
\end{equation}
are isomorphic to each other. The same argument gives isomorphism 
\begin{equation*}
H^1_c(\PP^1-A, \LL^\vee)\cong H^2_c(U, \QQ[\omega])_{\overline{\chi}}\cong H^2(S, \QQ[\omega])_{\overline{\chi}}. 
\end{equation*}
Moreover the Poincar\'e pairings
\begin{equation}
\label{equation: poincare S}
  H^2(S, \QQ[\omega])_\chi\times H^2(S, \QQ[\omega])_{\overline{\chi}}\to \QQ[\omega]
\end{equation}
and
\begin{equation}
\label{equation: poincare U}
  H^2(U, \QQ[\omega])_
  \chi\times H^2_c(U, \QQ[\omega])_{\overline{\chi}}\to \QQ[\omega]
\end{equation}
are the same under the induced isomorphisms.

There is a nondegenerate bilinear form 
\begin{equation}
\label{equation: poincare P1-A}
H^1(\PP^1-A, \LL)\times H^1_c(\PP^1-A, \LL^\vee)\to \QQ[\omega].
\end{equation}
compatible with the Poincar\'e pairings \eqref{equation: poincare S} and \eqref{equation: poincare U}. The quasi-isomorphism $j_*\LL^\vee \cong j_!\LL^\vee$ (see \cite[Proposition 2.6.1]{deligne1986monodromy}) induces isomorphisms 
\begin{equation*}
H^1_c(\PP^1-A, \LL^\vee)\cong H^1(\PP^1-A, \overline{\LL})\cong \overline{H^1(\PP^1-A, {\LL})}.
\end{equation*}
Therefore, the Poincar\'e pairing \eqref{equation: poincare P1-A} gives rise to a Hermitian form on $H^1(\PP^1-A, \LL)$. This is compatible with the Hermitian form on $H^2(S, \QQ[\omega])_\chi$ given by \eqref{equation: poincare S} and identification $H^2(S, \QQ[\omega])_{\overline{\chi}}= \overline{H^2(S, \QQ[\omega])_\chi}$. The Hodge filtration on $H^2(S, \QQ[\omega])_{\overline{\chi}}$ is
\begin{equation*}
F^1=H^0(S, K_S)\cong H^0(\PP^1, \pi_*K_S)\cong H^0(\PP^1, \Omega^1(\sum_{i=1}^k \alpha_i a_i)(\LL)).
\end{equation*}
See \cite[\S2.11]{deligne1986monodromy} for more details about the line bundle $\Omega^1(\sum_{i=1}^k \alpha_i a_i)(\LL)$. This implies that the Hodge filtrations are compatible with the isomorphism
\begin{equation*}
H^2(S, \QQ[\omega])_\chi\cong H^1(\PP^1-A, \LL).
\end{equation*}
So this is an isomorphism between $\QQ[\omega]$-Hodge structures of ball-type.
\end{proof}

\begin{cor}
There is a natural isomorphism
\begin{equation*}
H^2(S, \QQ[\omega])_\chi\cong H^1(C, \QQ[\omega])_{\overline{\psi}},
\end{equation*}	
between $\QQ[\omega]$-Hodge structures of ball type.
\end{cor}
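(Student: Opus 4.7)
The plan is to obtain the stated isomorphism as a composition of two isomorphisms already established in the paper. By Proposition \ref{proposition: kondo to Deligne-Mostow}, there is a natural isomorphism
\begin{equation*}
H^2(S, \QQ[\omega])_\chi \cong H^1(\PP^1-A, \LL)
\end{equation*}
of $\QQ[\omega]$-Hodge structures of ball type. On the other hand, the isomorphism \eqref{isomorphism deligne mostow} recalled from \cite[\S 2.23]{deligne1986monodromy} provides
\begin{equation*}
H^1(\PP^1-A, \LL) \cong H^1(C, \QQ[\omega])_{\overline{\psi}}
\end{equation*}
as $\QQ[\omega]$-Hodge structures of ball type. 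Composing these two yields the desired identification, so the only content is to check that both sides of the composition refer to the same local system $\LL$ and the same set of branch points with matching weight data.

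First I would verify that the Deligne--Mostow weights $\alpha=(\alpha_1,\dots,\alpha_k)$ attached to $\LL$ on $\PP^1-A$ (determined in \S\ref{section: monodromy of Kondo} and tabulated in Table \ref{table: n=2}) agree with the ones for which the Deligne--Mostow covering $C$ given by $y^e = \prod_i (x-x_i)^{e\alpha_i}$ realizes $\LL$ as the $\overline{\psi}$-eigensheaf of $R^0(\pi_C)_*\QQ[\omega]$ on the complement of the branch points. This is essentially the definition of the Deligne--Mostow construction, so the verification amounts to matching local monodromies: Corollary \ref{corollary: monodromy around singular fibers} computes the local monodromy of $\LL$ at each $a_i$ as multiplication by a specific root of unity, and these match the local monodromies of the $\overline{\psi}$-eigensheaf of $C\to\PP^1$ by construction.

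Next I would record that both isomorphisms are isomorphisms of $\QQ[\omega]$-Hodge structures of ball type and preserve the Hermitian forms, as shown respectively in the proof of Proposition \ref{proposition: kondo to Deligne-Mostow} (via the Poincar\'e duality/Leray argument and the $\mu_3$-equivariance) and in \cite[\S 2.23, \S 12]{deligne1986monodromy}. The composition is therefore an isomorphism of $\QQ[\omega]$-Hodge structures of ball type, which is exactly what the corollary asserts.

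The main obstacle, if any, is purely bookkeeping: making sure the conventions for the characters $\chi$ and $\psi$ (and which eigenspace carries $H^{1,0}$) are compatible between Kond\=o's $K3$ picture and Deligne--Mostow's curve picture, so that the ball-type Hodge filtrations line up rather than getting conjugated. This can be read off from the explicit description of $H^{0}(S,K_S)$ as sections of $\Omega^1(\sum \alpha_i a_i)(\LL)$ used at the end of the proof of Proposition \ref{proposition: kondo to Deligne-Mostow}, which coincides with the Deligne--Mostow description of $H^{0}(C,K_C)_{\overline{\psi}}$. Once this matching is in place, no further computation is needed.
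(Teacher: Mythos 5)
Your proposal is correct and follows exactly the paper's own one-line proof: the corollary is obtained by composing the isomorphism \eqref{isomorphism deligne mostow} with Proposition \ref{proposition: kondo to Deligne-Mostow}. The extra bookkeeping you flag about matching local monodromies and character conventions is a reasonable sanity check, but the paper treats it as already handled by the setup preceding the corollary.
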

\begin{proof}
This follows directly from \eqref{isomorphism deligne mostow} and Proposition \ref{proposition: kondo to Deligne--Mostow}.
\end{proof}

\begin{cor}
\label{corollary: comm 2}
For each case $T$ in Table \ref{table: n=2} and \ref{table: nonnsc}, the arthmetic group $\PU(H^2(S,\ZZ[\omega])_\chi,h)$ and the Deligne--Mostow groups corresponding to $\alpha_1, \alpha_2$ are commensurable.
\end{cor}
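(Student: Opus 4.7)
The plan is to exploit the fact that each $K3$ surface $S$ in these examples carries two isotrivial elliptic fibrations $p_i\colon \widetilde{S} \to P_i$ ($i=1,2$), with Deligne-Mostow monodromy data $\alpha_1, \alpha_2$ as tabulated. Applying the corollary of Proposition \ref{proposition: kondo to Deligne-Mostow} to each fibration separately yields two natural isomorphisms of $\QQ[\omega]$-Hodge structures of ball type
\[
H^2(S, \QQ[\omega])_\chi \;\cong\; H^1(C_i, \QQ[\omega])_{\overline{\psi}_i}, \qquad i=1,2,
\]
where $C_i$ is the Deligne-Mostow cyclic cover of $P_i \cong \PP^1$ associated with $\alpha_i$. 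These identifications realize the three integral lattices $H^2(S, \ZZ[\omega])_\chi$ and $H^1(C_i, \ZZ[\omega])_{\overline{\psi}_i}$ ($i=1,2$) as full-rank $\ZZ[\omega]$-sublattices of a common $\QQ[\omega]$-vector space $V$.

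Next I would verify that under these identifications the three Hermitian forms (intersection pairing on the $K3$ side, intersection pairing on the curve side, and Poincar\'e pairing on the middle term) agree up to a positive rational scalar. This is done by tracking the natural operations entering the proof of Proposition \ref{proposition: kondo to Deligne-Mostow}: the Leray spectral sequence for $p_i$, the quasi-isomorphism $j_! \simeq j_*$ for the rank-one local system $\LL_i$ over $P_i - A_i$, and the Deligne-Mostow identification \cite[\S 2.23]{deligne1986monodromy}. Each of these rescales the pairing by a rational factor, and since $\PU$ is insensitive to positive rescaling of the Hermitian form, we may regard the three arithmetic groups as subgroups of a single $\PU(1,k)$.

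Finally, one invokes the elementary observation that any two full-rank $\ZZ[\omega]$-lattices in a common finite-dimensional $\QQ[\omega]$-vector space are commensurable as abelian groups (a common multiple brings each into the other). Thus the stabilizers of these lattices in the unitary group of the common Hermitian form are pairwise commensurable, giving commensurability of $\PU(H^2(S, \ZZ[\omega])_\chi, h)$ with each $\PU(H^1(C_i, \ZZ[\omega])_{\overline{\psi}_i}, h)$; the latter are, by Deligne-Mostow theory in the arithmetic cases appearing in the tables, commensurable with the Deligne-Mostow lattices $\Gamma_{\alpha_i}$. The main technical obstacle is the scalar-compatibility of the three pairings under the chain of natural isomorphisms; once this reduces to positive rational rescalings, commensurability follows formally from the lattice-commensurability fact above.
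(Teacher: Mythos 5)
Your proposal is correct and follows essentially the same route that the paper intends (the corollary is stated without an explicit proof, but the ingredients are precisely Corollary \ref{corollary: comm 2}'s two predecessors). You correctly identify that the natural isomorphisms from Proposition \ref{proposition: kondo to Deligne-Mostow} and its corollary, applied to each of the two isotrivial elliptic fibrations, realize $H^2(S,\ZZ[\omega])_\chi$ and $H^1(C_i,\ZZ[\omega])_{\overline{\psi}_i}$ as full-rank lattices in a common $\QQ[\omega]$-Hermitian space, and that commensurability of the corresponding $\PU$-stabilizers then follows by the standard fact about lattice stabilizers of commensurable lattices. Two remarks worth noting: first, the scalar-compatibility concern you flag is in fact fully resolved within the paper's proof of Proposition \ref{proposition: kondo to Deligne-Mostow}, which tracks the Poincar\'e pairings through the Leray spectral sequence and the quasi-isomorphism $j_!\LL^\vee \cong j_*\LL^\vee$; the forms agree on the nose, not merely up to scalar, so this is not an obstacle at all. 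Second, your appeal to ``Deligne-Mostow theory in the arithmetic cases'' to identify $\Gamma_{\alpha_i}$ with the full integral lattice group up to commensurability is the one place where, strictly speaking, a citation is being leaned on; the paper leaves this equally implicit (all the weights in Tables \ref{table: n=2} and \ref{table: nonnsc} are in $\frac{1}{6}\ZZ$, putting everything over $\QQ[\omega]$, and these cases are arithmetic in Deligne-Mostow's/Mostow's classification, so there is no genuine gap, but a careful write-up would cite this).
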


%\begin{proof}
%Let $S$ be the resolution of the triple cover of $P_1\times P_2$ branching along the corresponding divisor. The generic fibers of both $p_1, p_2$ are isomorphic to the elliptic curve with zero j-invariant. For $i=1,2$, denote by $A_i$ the branching set of $p_i$, and $\LL_i=(R^1 {p_i}_* \QQ[\omega])_\chi$. By Proposition \ref{proposition: kondo to Deligne--Mostow}, we have isomorphisms 
%\begin{equation*}
%H^1(P_1-A_1, \LL_1)\cong H^2_\chi(S,\QQ[\omega]) \cong H^1(P_2-A_2, \LL_2)
%\end{equation*}
%of $\QQ[\omega]$-Hodge structures of ball type. Therefore, the arithmetic groups for the two Deligne--Mostow data are commensurable.
%\end{proof}

\section{Relations to Deligne--Mostow Theory: Dimension $n>2$}
\label{section: 3 DM}
In this section we study the ball quotients arising from $(Z, d, L)=((\PP^1)^3, 3, \calO(1)^{\boxtimes 3})$ towards their relation to Deligne--Mostow theory. There are $40$ cases of ball type, and we will show $31$ of them are related to Deligne--Mostow ball quotients via isotrivial fibration of $K3$ surfaces (see Table \ref{table: CY Deligne--Mostow}). The case of $T=((3,3,0), (0,0,3))$ has been well-studied by Voisin-Borcea-Rohde \cite{voisin1993miroirs, borcea1997k3, rohde2009cyclic}. In fact any example of $T$ with a summand $(0,0,3)$ can be related to Kond\=o's $K3$ surfaces studied in \S\ref{section: monodromy of Kondo} via van Geemen's half-twist of Hodge structures \cite{van2001half, van2002half}, see also \S\ref{subsection: half-twist}. On the other hand, there are examples not directly from the half-twist construction. We study the case $T=((3,2,0),(0,1,3))$ in detail and other examples follows similarly. The correspondence with Deligne--Mostow ball quotients are collected in Table \ref{table: CY Deligne--Mostow}.

We write $Z=P_1\times P_2 \times P_3$, where $P_i$ are projective lines. Consider type $T=(3,2,0)+(0,1,3)$ and let $D=D_1+D_2$ be a simple normal crossing divisor on $Z$ of type $T$. The Calabi--Yau orbifold $Y$ is the triple cover of $Z$ branched along $D$. Let $p_i$ be the projection of $Y$ or $Z$ to $P_i$.

Let $D_1=C_1\times P_3$ with $C_1$ a $(3,2)$-curve in $P_1\times P_2$, and let $D_2=P_1\times C_2$ with $C_2$ a $(1,3)$-curve in $P_2\times P_3$. Let $pr_1\colon C_1\to P_2$ and $pr_2\colon C_2\to P_2$ be natural projections. We have $g(C_1)=2$ and $g(C_2)=0$. Both $pr_1$ and $pr_2$ are triple covering maps. By Hurwitz formula, $pr_1$ has $8$ branch points $x_1, \cdots, x_8$ and $pr_2$ has $4$ branch points $x_9, \cdots, x_{12}$. For a generic choice of $D_1$ and $D_2$, the $12$ branch points are different. The fibration $p_2\colon Y\to P_2$ is an isotrivial family of ADE $K3$ surfaces outside $B=\{x_1, \cdots, x_{12}\}$. The generic fiber is a cyclic cover of $P_1\times P_3$ branching along divisors of type $(3,0)+(0,3)$. Consider the $\chi$-eigensheaf 
\begin{equation*}
\LL=(R^2 {p_2}_* \QQ[\omega])_\chi|_{P_2-\{x_1,\cdots, x_{12}\}}.
\end{equation*}

\begin{prop}
    The sheaf $\LL$ is a rank-one $\QQ[\omega]$-local system with monodromy corresponding to Deligne--Mostow data
    $\alpha=({1\over 6})^{12}.$
\end{prop}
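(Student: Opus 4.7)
The plan is to realize $\LL$ as a tensor product of two rank-one $\QQ[\omega]$-local systems and to read off its monodromy from Kodaira's classification of singular elliptic fibers.

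First, for generic $t\in P_2-B$ the fiber $S_t$ is a triple cyclic cover of $P_1\times P_3$ branched along three vertical lines (at the points of $pr_1^{-1}(t)\subset P_1$) together with three horizontal lines (at the points of $pr_2^{-1}(t)\subset P_3$). This is Kond\=o's $K3$ for the two-dimensional partition $(3,0)+(0,3)$, which admits a Voisin--Borcea--Rohde description
\begin{equation*}
S_t\cong (E_1(t)\times E_2(t))/\mathrm{antidiag}(\mu_3),
\end{equation*}
where $E_1(t)\to P_1$ (resp.\ $E_2(t)\to P_3$) is the triple cyclic cover branched at the three vertical (resp.\ horizontal) branch points. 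Both $E_i(t)$ are elliptic curves with $j$-invariant zero. A direct analysis of the induced $\mu_3$-action on $H^2(E_1(t)\times E_2(t),\QQ[\omega])$ produces a canonical identification
\begin{equation*}
H^2(S_t,\QQ[\omega])_\chi \;\cong\; H^1(E_1(t),\QQ[\omega])_\chi \otimes H^1(E_2(t),\QQ[\omega])_\chi,
\end{equation*}
so in particular $H^2(S_t,\QQ[\omega])_\chi$ has $\QQ[\omega]$-rank one; this shows that $\LL$ is rank one.

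Next, let $\hat C_1\to P_1\times P_2$ be the triple cyclic cover branched along $C_1$, and denote by $\pi_1\colon \hat C_1\to P_2$ the composition with projection to $P_2$. Its fiber over $t$ is canonically $E_1(t)$, and $\pi_1$ is an isotrivial elliptic fibration whose singular fibers lie exactly over the eight ramification points $x_1,\dots,x_8$ of $pr_1$. Analogously define $\pi_2\colon\hat C_2\to P_2$, an elliptic fibration with singular fibers over $x_9,\dots,x_{12}$. Setting $\LL_i\coloneqq (R^1{\pi_i}_*\QQ[\omega])_\chi$, the pointwise identification above globalizes to an isomorphism of $\QQ[\omega]$-local systems $\LL\cong \LL_1\otimes \LL_2$ on $P_2-B$.

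The remaining step is to determine the Kodaira type of each singular fiber. Over each $x_i$ with $i\le 8$, two of the three branch points of the triple cover $E_1(t)\to P_1$ collide, so the fiber of $\pi_1$ over $x_i$ is the triple cover of $P_1$ branched along a divisor of shape $2Q_1+Q_2$, i.e.\ a cuspidal rational curve; hence this singular fiber is of Kodaira type $\mathrm{II}$. Similarly, each of the four singular fibers of $\pi_2$ is of type $\mathrm{II}$. Applying the type $\mathrm{II}$ case of Corollary \ref{corollary: monodromy around singular fibers}(2), the monodromy of $\LL_1$ around each $x_i$ with $i\le 8$ is multiplication by $-\omega^2=\exp(\pi\sqrt{-1}/3)$, and is trivial around $x_i$ for $i\ge 9$; analogously for $\LL_2$. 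Combining via $\LL\cong\LL_1\otimes\LL_2$, the monodromy of $\LL$ around each of the twelve points $x_i$ equals $\exp(\pi\sqrt{-1}/3)=\exp(2\pi\sqrt{-1}\cdot\tfrac{1}{6})$, corresponding to the Deligne--Mostow weight $\tfrac{1}{6}$.

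The main technical point is the global tensor-product identification $\LL\cong \LL_1\otimes \LL_2$: although the pointwise isomorphism at each $t$ follows from the Voisin--Borcea--Rohde construction, some care is needed to verify that it assembles into an isomorphism of $\QQ[\omega]$-local systems over the whole punctured base $P_2-B$, compatibly with the $\mu_3$-eigenspace decompositions coming from the deck transformations on both sides.
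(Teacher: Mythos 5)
Your proof is correct and follows essentially the same strategy as the paper: realize the generic fiber $S_t$ as the skewed-diagonal quotient $(E_1(t)\times E_3(t))/\mu_3$ via Proposition~\ref{proposition: diagonal quotient}, deduce rank one from the induced Künneth-type isomorphism on $\chi$-eigenspaces, and read off the local monodromy from the Kodaira type~II fiber (cuspidal cubic from two colliding branch points) via Corollary~\ref{corollary: monodromy around singular fibers}(2). The only real difference is structural: you globalize the auxiliary fibrations to $\pi_i\colon \hat C_i\to P_2$ and assert the global tensor decomposition $\LL\cong\LL_1\otimes\LL_2$, whereas the paper only performs the fibered-product/quotient analysis over a small disk $U$ around each $x_i$, writing $p_2^{-1}(U)\cong\mu_3\backslash(S_1\times_U S_2)$ and hence $\LL|_{U-\{x_i\}}\cong(R^1\pi_{1*}\QQ[\omega])_\chi\otimes(R^1\pi_{2*}\QQ[\omega])_\chi$. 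Since both the rank (a fiberwise quantity) and the monodromy (a local invariant at each puncture) are accessible from the local version, the paper's phrasing sidesteps exactly the ``care is needed to globalize'' point you flag at the end; your global identification is not actually wrong (the same covering/quotient construction works over all of $P_2$), but it is unnecessary for the conclusion, and the paper's local argument is the more economical route.
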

\begin{proof}
Let $E_i\to P_i$ be a triple cover of $P_i$ branching at three distinct points. The cyclic group operation $\mu_3$ on $E_i$ is denoted by $a\cdot x$. The product group $\mu_3\times \mu_3=\{(a, b)|a, b\in \mu_3\}$ operates on $E_1\times E_3$ by $(a, b)\cdot (x, y)=(a\cdot x, b\cdot y)$. This induces an operation of $\mu_3$ on $E_1\times E_3$ defined by 
\begin{equation}
\label{equation: skewed-diagonal action}
    a\cdot (x, y)=(a\cdot x,a^{-1}\cdot y).
\end{equation} 
The quotient space is denoted by $\mu_3\bs E_1\times E_3$ and it has a $\mu_3$-operation defined by $a\cdot [(x, y)]=[(a\cdot x, y)]$.

The fibers of $p_2$ outside $B=\{x_1, \cdots, x_{12}\}$ are $K3$ surfaces $S\to P_1\times P_3$ branching along divisors of type $(3,0)+(0,3)$. By Proposition \ref{proposition: diagonal quotient} the surface $S$ is isomorphic to $\mu_3\bs E_1\times E_3$ with compatible $\mu_3$-operations. So we have 
\begin{equation*}
H^2(S, \QQ[\omega])_\chi\cong H^1(E_1,\QQ[\omega])_\chi\otimes H^1(E_3,\QQ[\omega])_\chi.
\end{equation*}

The branching locus of $Y\to P_1\times P_2\times P_3$ is $(C_1\times P_3)\cup (P_1\times C_2)$. Fix a point $x_i\in P_2$, $1\leq i\leq 8$, let $U$ be an open disk only containing $x_i$. We next try to describe the geometry of $p_2^{-1}(U)$. Let $\pi_1 \colon S_1\to P_1\times U$ be the triple cover of $P_1\times U$ branching along $C_1\cap (P_1\times U)$, and $\pi_2 \colon S_2\to P_1\times U$ be the triple cover of $U\times P_3$ branching along $C_2\cap (U\times P_3)$. The fibered product $S_1\times_U S_2 \to U$ has a natural action of $\mu_3\times \mu_3$ fiberwisely. The quotient by $\mu_3$ as \eqref{equation: skewed-diagonal action} gives $\mu_3\bs S_1\times_U S_2 \to U$ and it is isomorphic to $p_2^{-1}(U) \to U$. So we have
\begin{equation*}
\LL|_{U-\{x_i\}}=(R^2{p_2}_*\QQ[\omega])_\chi |_{U-\{x_i\}}\cong 
(R^1{\pi_1}_* \QQ[\omega])_\chi \otimes (R^1{\pi_2}_* \QQ[\omega])_\chi .
\end{equation*}
From Proposition \ref{proposition: isotrivial elliptic fibrations and Kodaira types}, the resolution of $S_1$ has singular fiber of type II at $x_i$ and $S_2$ is a smooth fibration. So the monodromy of $\LL$ around $x_i$ is multiplication by $-\omega^2=\exp{\pi \sqrt{-1}\over 3}$. The same arugment works for $x_i$, $9\leq i\leq 12$. So $\LL$ is a rank-one $\QQ[\omega]$ local system on $P_2-B$ corresponding to Deligne--Mostow data $\alpha=({1\over6})^{12}$.
\end{proof}

The same argument works for any type $T$ and projection $p_i$ satisfying the following condition.
\begin{cond}
\label{Condition: CY fiber rigid}
Suppose $\{1,2,3\}=\{i,i_1, i_2\}$. Any member of $T$ has component zero at $i_1$ or $i_2$. 
\end{cond}
We collect the corresponding Deligne--Mostow data in Table \ref{table: CY Deligne--Mostow}. Let $T$ be a type satisfying Condition \ref{Condition: CY fiber rigid} and $p_i\colon Y\to P_i$ corresponding to Deligne--Mostow data $\alpha$. Denote by $C$ the cyclic cover of $\PP^1$ with Deligne--Mostow data $\alpha$. 

\begin{prop}
\label{proposition: CY to Deligne--Mostow}
There is a natural isomorphism between $\QQ[\omega]$-Hodge structures of ball type
\begin{equation}
H^3(Y, \QQ[\omega])_\chi\cong H^1(C, \QQ[\omega])_{\overline{\psi}}.
	\end{equation}
\end{prop}

\begin{proof}
Since the fibers of $p_i$ are $K3$ surfaces with ADE singularities with vanishing $H^3$, we still have natural isomorphisms $H^1(P_i-B,\LL)\cong H^3(Y, \QQ[\omega])_\chi$ together with the Hermitian pairings. The other parts follow from the same argument in the proof of Proposition \ref{proposition: kondo to Deligne--Mostow}.
\end{proof}

\begin{cor}
\label{corollary: comm}
For any type $T$ in Table \ref{table: CY Deligne--Mostow} with the associated Deligne--Mostow data $\alpha$, the ball quotient corresponding to Calabi--Yau varieties of type $T$ is commensurable with ball quotient in Deligne--Mostow theory with data $\alpha$. In other words, they are the same up to a finite cover.
\end{cor}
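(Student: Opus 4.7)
The plan is to deduce Corollary \ref{corollary: comm} directly from Proposition \ref{proposition: CY to Deligne-Mostow} combined with the arithmeticity statement Proposition \ref{proposition: monodromy} and the classical arithmeticity of the Deligne-Mostow lattices for the data $\alpha$ appearing in Table \ref{table: CY deligne-mostow}. The key point is that Proposition \ref{proposition: CY to Deligne-Mostow} produces an isomorphism of $\QQ[\omega]$-Hodge structures of ball type between $H^3(Y,\QQ[\omega])_\chi$ and $H^1(C,\QQ[\omega])_{\overline{\psi}}$ that is compatible with the Hermitian polarizations. Hence both period maps take values in the same complex hyperbolic ball $\BB$, realized as the space of positive lines for a single Hermitian form $h$ on a $\QQ[\omega]$-vector space, and the only thing left to check is that the two monodromy groups are both finite-index subgroups of a common arithmetic subgroup of $\PU(1,n)$.

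First I would fix a reference point in the CY-moduli side and transport the integral structure via Proposition \ref{proposition: CY to Deligne-Mostow} to obtain a single $\ZZ[\omega]$-lattice $\Lambda$ with Hermitian form $h$ of signature $(1,n-1)$, whose underlying $\QQ[\omega]$-vector space is identified with both $H^3(Y,\QQ[\omega])_\chi$ and $H^1(C,\QQ[\omega])_{\overline{\psi}}$. Let $\Gamma = \PU(\Lambda,h)$ be the resulting arithmetic lattice in $\PU(1,n-1)$. On the CY side, Proposition \ref{proposition: monodromy} (applied with $d=3$) shows that the monodromy group $\Gamma_T$ has finite index in the integral unitary group $\PU(H^3(Y,\ZZ[\omega])_\chi,h)$, which under the identification above is (a finite-index sublattice of) $\Gamma$. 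On the Deligne-Mostow side, the data $\alpha$ in the table all have denominator $6$ and satisfy the $\Sigma\text{INT}$ condition (as recorded in the Deligne-Mostow lists), so the monodromy group $\Gamma_\alpha$ of the associated family of cyclic covers $C\to\PP^1$ is likewise a finite-index subgroup of the integral unitary group of $H^1(C,\ZZ[\omega])_{\overline{\psi}}$, which is again (a finite-index sublattice of) $\Gamma$.

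Combining these two, both $\Gamma_T$ and $\Gamma_\alpha$ are commensurable with $\Gamma$, hence with each other; this is exactly the commensurability assertion of the corollary. The only subtle point to verify, and the step I would expect to require the most care, is the compatibility of the integral structures: Proposition \ref{proposition: CY to Deligne-Mostow} is stated over $\QQ[\omega]$, whereas commensurability is a statement about $\ZZ[\omega]$-lattices. The chain of isomorphisms in the proof of Proposition \ref{proposition: CY to Deligne-Mostow} passes through $H^1(P_i-B,\LL)$ and uses the quasi-isomorphism $j_*\LL^\vee\cong j_!\LL^\vee$ of \cite{deligne1986monodromy}, all of which are defined over $\ZZ[\omega]$ up to finite index. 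This suffices, because commensurability only requires agreement after passing to finite-index subgroups, and any two $\ZZ[\omega]$-lattices in the same $\QQ[\omega]$-Hermitian space yield commensurable integral unitary groups.

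Finally, I would remark that once $\Gamma_T$ and $\Gamma_\alpha$ are seen to be commensurable as subgroups of $\PU(1,n-1)$, commensurability of the corresponding ball quotients $\Gamma_T\backslash\BB$ and $\Gamma_\alpha\backslash\BB$ follows tautologically: the two quotients are covered by the common finite-index quotient $(\Gamma_T\cap\Gamma_\alpha)\backslash\BB$. This gives the statement of Corollary \ref{corollary: comm} uniformly for every line of Table \ref{table: CY deligne-mostow}, since the argument only uses the isomorphism of Proposition \ref{proposition: CY to Deligne-Mostow} and the arithmeticity inputs, both of which apply to every row.
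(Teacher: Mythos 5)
Your proposal is correct and takes the same route the paper's implicit argument would: identify the $\QQ[\omega]$-Hermitian spaces via Proposition \ref{proposition: CY to Deligne-Mostow}, use Proposition \ref{proposition: monodromy} for arithmeticity of $\Gamma_T$, and invoke the classical arithmeticity (or at least the lattice property, via $\Sigma$INT, together with the inclusion into the integral unitary group) of the Deligne-Mostow monodromy for the $\alpha$ in the table, so that commensurability follows because arithmetic subgroups of a common $\QQ$-algebraic group are automatically commensurable. The paper states the corollary without proof as an immediate consequence of Proposition \ref{proposition: CY to Deligne-Mostow}; your write-up supplies exactly the missing details, including the correct observation that the $\QQ[\omega]$-level identification suffices since different $\ZZ[\omega]$-lattices in the same $\QQ[\omega]$-Hermitian space yield commensurable integral unitary groups.
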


%\begin{proof}
%The monodromy group of the families of Calabi--Yau varieties is an arithmetic group in $Aut(H^3(Y, \QQ[\omega])_\chi)$ and the monodromy group in Deligne--Mostow theory is an arithmetic group in $Aut(H^1(C, \QQ[\omega])_{\overline{\psi}})$. Since $H^3(Y, \QQ[\omega])_\chi\cong H^1(C, \QQ[\omega])_{\overline{\psi}}$, the two lattices $H^3(Y, \ZZ[\omega])_\chi$ and $H^1(C, \ZZ[\omega])_{\overline{\psi}}$ are commensurable. So are the two monodromy groups. So the ball quotients are the same up to a finite cover.
%\end{proof}

For a fixed type $T$, it is possible that different projections $p_i$ satisfy the Condition \ref{Condition: CY fiber rigid}. Combining Corollary \ref{corollary: comm 2} and Corollary \ref{corollary: comm}, we have the following commensurability result for Deligne--Mostow lattices.

\begin{cor}
\label{corollary: comm}
The arithmetic lattices $\Gamma_\alpha\in \PU(1, n)$ with the following lists of parameters $\alpha$ are commensurable among each list.
\begin{enumerate}
  \item $n=6$, $6\alpha=((1)^8,4), ((1)^6, (2)^3)$.
  \item $n=5$, $6\alpha=(1^6,2,4), (1^7,5),(1^4,2^4)$.
  \item $n=4$, $6\alpha=(1^5,2,5), (1^4,2^2,4), (1^2,2^5)$.
  \item $n=3$, $6\alpha=(1^4, 4^2), (1^3,2^2,5), (1^2,2^3,4), (2^6)$.
  \item $n=2$, $6\alpha=(1^3,4,5), (1^2,2,4^2), (1,2^3,5), (2^4, 4)$.
\end{enumerate}
\end{cor}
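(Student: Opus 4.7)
My plan is to bootstrap the corollary directly from Corollary \ref{corollary: comm 2}, which asserts that for every partition type $T$ appearing in Tables \ref{table: n=2} and \ref{table: nonnsc}, both Deligne-Mostow arithmetic groups $\Gamma_{\alpha_1(T)}$ and $\Gamma_{\alpha_2(T)}$ are commensurable with the single group $\PU(H^2(S_T,\ZZ[\omega])_\chi,h)$ coming from the associated $K3$ surface $S_T$. Since commensurability is an equivalence relation, each row of these two tables immediately yields a commensurability $\Gamma_{\alpha_1(T)} \sim \Gamma_{\alpha_2(T)}$. The task of proving the corollary thus reduces to the combinatorial verification that, within each of the five lists, every parameter can be connected to every other parameter by a chain of rows from Tables \ref{table: n=2} and \ref{table: nonnsc}.

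I would carry this out list by list. For item (1) with $n=6$, the pair $\{(1^8,4),(1^6,2^3)\}$ is exactly the row of Table \ref{table: n=2} for the partition $(3,2)+(0,1)$. For item (2) with $n=5$, the first two rows of Table \ref{table: nonnsc} give $(1^6,2,4)\sim(1^7,5)$ and $(1^4,2^4)\sim(1^6,2,4)$, so all three parameters lie in one commensurability class through $(1^6,2,4)$. For item (3) with $n=4$, the rows ``$C_1$ one node plus tangent'' and ``$C_1$ two nodes'' of Table \ref{table: nonnsc} supply the chain $(1^5,2,5)\sim(1^4,2^2,4)\sim(1^2,2^5)$. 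For item (4) with $n=3$, the row $(3,1)+(0,2)$ of Table \ref{table: n=2} gives $(1^4,4^2)\sim(2^6)$, the row $(2,1)+(1,1)+(0,1)$ of the same table (or the row ``$C_1$ three nodes'' of Table \ref{table: nonnsc}) gives $(2^6)\sim(1^2,2^3,4)$, and the row ``$C_1$ two nodes plus tangent'' of Table \ref{table: nonnsc} gives $(1^2,2^3,4)\sim(1^3,2^2,5)$. For item (5) with $n=2$, the row $(2,1)+(1,0)+(0,2)$ of Table \ref{table: n=2} and the rows ``$C_1$ three nodes plus tangent'' and ``$C_1:(3,1),C_2:(0,1),C_3:(0,1)$ tangent'' of Table \ref{table: nonnsc} all produce a commensurability with $(2^4,4)$, so the four parameters form a single class centered at $(2^4,4)$.

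The substantive step is therefore really the one already established in Corollary \ref{corollary: comm 2}: namely, that for each $T$ there is an isomorphism of $\QQ[\omega]$-Hodge structures of ball type
\begin{equation*}
H^2(S_T,\QQ[\omega])_\chi \;\cong\; H^1(C_{\alpha_i(T)},\QQ[\omega])_{\overline{\psi}}, \qquad i=1,2,
\end{equation*}
coming from the two isotrivial elliptic fibrations $p_i\colon \widetilde{S}_T\to P_i$ together with Proposition \ref{proposition: kondo to Deligne-Mostow}. The only genuinely new ingredient needed here is that this identification is compatible with the integral structures (up to finite index), which follows because on both sides the integral lattice arises from the singular cohomology of a complex projective variety, and the isomorphism is induced by morphisms of varieties; compatibility of Hermitian forms was already recorded in the proof of Proposition \ref{proposition: kondo to Deligne-Mostow}.

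The main potential obstacle is ensuring that the implicit chains of commensurabilities never inadvertently escape their respective list, that is, that the $\Gamma$-lattices are genuinely acting on the correct-dimensional ball; this is immediate from the fact that the dimension of the ball is $k-3$ where $k$ is the number of branch points in $\alpha$, so all five lists are internally dimension-consistent. Beyond this bookkeeping, the argument is essentially a graph-connectivity check on the rows of Tables \ref{table: n=2} and \ref{table: nonnsc}, so no further input is required.
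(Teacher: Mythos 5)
Your proposal is correct and takes essentially the same approach as the paper: the corollary is obtained by chaining the commensurabilities recorded in the Deligne--Mostow data tables, each row of which identifies the two Deligne--Mostow groups via the $\QQ[\omega]$-Hodge structure of a single (possibly degenerate) $K3$ surface with two isotrivial elliptic fibrations (Proposition \ref{proposition: kondo to Deligne-Mostow} and Corollary \ref{corollary: comm 2}). The only small deviation is that the paper's text invokes both the $K3$-level Corollary \ref{corollary: comm 2} and the Calabi--Yau Corollary from Table \ref{table: CY deligne-mostow}, whereas you correctly observe that the chains you exhibit from Tables \ref{table: n=2} and \ref{table: nonnsc} alone already connect every parameter within each list, and your passing concern about integral structures is automatically handled because arithmetic subgroups of the same $\QQ$-algebraic unitary group are always commensurable.
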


\begin{rmk}
Many of the commensurability examples already appeared in \cite{sauter1988thesis, sauter1990isomorphisms, deligne1993commensurabilities} from a different point of view. For example, when the decomposition type is $(2,1)+(1, 0)+(0,2)$, the two elliptic fibrations give two weights $\mu_1=({2\over 3}, {1\over 3}, {1\over 3},{1\over 3},{1\over 3})$ and $\mu_2=({2\over 3}, {2\over 3}, {1\over 3}, {1\over 6}, {1\over 6})$. This is exactly the case in \cite[Theorem 10.6]{deligne1993commensurabilities} by setting $a=b={1\over 3}$. 
\end{rmk}

\begin{longtable}
{|p{.04\textwidth} |p{.50\textwidth} | p{.04\textwidth} | p{.18\textwidth} | p{.12\textwidth}  |}
\hline
No. &Type $T$ & $h^{2,1}$ & DM Data $6\alpha$ & Complete? \\\hline
    $1$ & $(3,3,0)+(0,0,3)$ & $9$ & $({1}^{12})$ & N \\\hline
	$2$ & $(3,2,0)+(0,1,3)$ & $9$ &  $({1}^{12})$ & N    \\\hline
	$3$ & $(3,2,0)+(0,1,2)+(0,0,1)$ & $8$ & $ ({1}^{10}, 2)$  & N  \\\hline
	$4$ & $(3,2,0)+(0,1,1)+(0,0,2)$ & $7$ &  $(1^8, 2^2)$ & N \\\hline
	$5$ & $(3,1,0)+(0,2,2)+(0,0,1)$ & $7$ & $(1^8,2^2)$ & N \\\hline
	$6$ & $(2,2,0)+(1,0,2)+(0,1,1)$ & $7$ & $(1^8,2^2)^*$ & N \\\hline
	$7$ & $(3,2,0)+(0,1,0)+(0,0,3)$ & $6$ & $(1^6,2^3)$, $(1^8,4)$ & N\\\hline
	$8$ & $(3,1,0)+(0,1,2)+(0,1,1)$ & $6$ & $(1^6, 2^3)$ & N \\\hline
	$9$ & $(3,1,0)+(0,1,0)+(0,1,3)$ & $6$ & $(1^8, 4)$ & N \\\hline
	$10$ & $(2,1,0)+(1,0,2)+(0,2,1)$ & $6$ & $(1^6,2^3)^*$ & Y \\\hline
	$11$ & $(3,1,0)+(0,2,1)+(0,0,2)$ & $5$ & $(1^4,2^4)$ & Y \\\hline
	$12$ & $(3,0,0)+(0,2,2)+(0,1,1)$ & $5$ & $(1^4,2^4)$ & N \\\hline
	$13$ & $(3,0,0)+(0,2,1)+(0,1,2)$ & $4$ & $(1^2, 2^5)$ & Y \\\hline
	$14$ & $(3,1,0)+(0,2,0)+(0,0,3)$ & $3$ & $(1^4,4^2)$, $(2^6)$ & Y \\\hline
	$15$ & $(3,0,0)+(0,3,0)+(0,0,3)$ & $0$ & $(4^3)$ & Y \\\hline
	$16$ & $(2,2,0)+(1,0,1)+(0,1,1)+(0,0,1)$ & $6$ & $(1^6,2^3)^*$ & N \\\hline
	$17$ & $(2,2,0)+(1,0,2)+(0,1,0)+(0,0,1)$ & $6$ & $(1^6,2^3)$ & N \\\hline
	$18$ & $(3,1,0)+(0,1,2)+(0,1,0)+(0,0,1)$ & $5$ & $(1^6,2,4)$ & N \\\hline
	$19$ & $(3,1,0)+(0,1,1)+(0,1,1)+(0,0,1)$ & $5$ & $(1^4,2^4)$ & Y \\\hline
	$20$ & $(2,2,0)+(1,0,1)+(0,1,0)+(0,0,2)$ & $5$ & $(1^4,2^4)$ & N \\\hline
	$21$ & $(2,1,0)+(1,1,0)+(0,1,2)+(0,0,1)$ & $5$ & $(1^4,2^4)$ & N \\\hline
	$22$ & $(2,1,0)+(1,0,2)+(0,1,0)+(0,1,1)$ & $5$ & $(1^4,2^4)^*$ & Y \\\hline
	$23$ & $(2,1,0)+(1,0,1)+(0,1,1)+(0,1,1)$ & $5$ & $(1^6, 2, 4)^*$ & Y \\\hline
	$24$ & $(2,1,0)+(1,0,1)+(0,1,2)+(0,1,0)$ & $5$ & $(1^6, 2, 4)^*$ & N \\\hline
	$25$ & $(3,0,0)+(0,2,2)+(0,1,0)+(0,0,1)$ & $4$ & $(1^4,2^4)$ & N \\\hline
	$26$ & $(2,1,0)+(1,1,0)+(0,1,1)+(0,0,2)$ & $4$ & $(1^2, 2^5)$ & Y \\\hline
	$27$ & $(2,1,0)+(1,0,2)+(0,2,0)+(0,0,1)$ & $4$ & $(1^2, 2^5)$ & Y \\\hline
	$28$ & $(3,0,0)+(0,2,1)+(0,1,1)+(0,0,1)$ & $3$ & $(2^6)$, $(1^2, 2^3, 4)$ & Y \\\hline
	$29$ & $(3,0,0)+(0,1,1)+(0,1,1)+(0,1,1)$ & $3$ & $(2^6)$ & Y \\\hline
	$30$ & $(2,1,0)+(1,0,1)+(0,2,0)+(0,0,2)$ & $3$ & $(2^6)$ & Y \\\hline
	$31$ & $(3,0,0)+(0,2,1)+(0,1,0)+(0,0,2)$ & $2$ & $(2^4, 4)$, $(1^2, 2, 4^2)$ & Y \\\hline
	$32$ & $(3,0,0)+(0,2,0)+(0,1,1)+(0,0,2)$ & $1$ & $(2^2, 4^2)$ & Y\\\hline		
	$33$ & $(2,1,0)+(0,1,2)+(1,0,0)+(0,1,0)+(0,0,1)$ & $4$ & $(1^4, 2^2, 4)$ & N \\\hline
	$34$ & $(2,1,0)+(1,0,1)+(0,1,1)+(0,1,0)+(0,0,1)$ & $4$ & $(1^4, 2^2, 4)^*$ & Y \\\hline
	$35$ & $(1,1,0)+(1,1,0)+(1,0,1)+(0,1,1)+(0,0,1)$ & $4$ & $(1^4, 2^2, 4)^*$ & Y \\\hline
	$36$ & $(2,1,0)+(1,0,0)+(0,1,1)+(0,1,0)+(0,0,2)$ & $3$ & $(1^2, 2^3, 4)$ & Y \\\hline
	$37$ & $(2,0,0)+(1,1,0)+(0,1,1)+(0,1,1)+(0,0,1)$ & $3$ & $(2^6)$ & Y \\\hline
	$38$ & $(3,0,0)+(0,1,1)+(0,1,1)+(0,1,0)+(0,0,1)$ & $2$ & $(2^4, 4)$ & Y \\\hline
	$39$ & $(2,0,0)+(1,1,0)+(0,1,1)+(0,1,0)+(0,0,2)$ & $2$ & $(2^4, 4)$ & Y \\\hline
	$40$ & $(1,1,0)+(1,0,1)+(0,1,1)+(1,0,0)+(0,1,0)+(0,0,1)$ & $3$ & $(1^2,2^3,4)^*$ & Y \\\hline
\caption{Deligne--Mostow Data in Three-Dimensional Cases}
\label{table: CY Deligne--Mostow}
\end{longtable}

%By some calculation, we can determine whether two cases in DM theory have same underlying Hermitian form.

%Then, using some refinement method, we can determine the commensurability classes for the rest $9$ cases by relating it to a DM case. 

The $9$ cases in Table \ref{table: CY Deligne--Mostow} with $*$ cannot be obtained directly from fibration construction. In \S\ref{section: hermitian form refinement} we calculated these cases via refinements relations. 

Similar construction and computation also works for dimension $n=4$ with $Z=(\PP^1)^4$, $d=3$, $L=\calO(1)^{\boxtimes 4}$. In Table \ref{table: dimension 4} we list the information for refinement of $T=(1, 3, 0, 0)+ (1, 0, 3, 0)+ (1, 0, 0, 3)$ which are not from half-twist of lower dimensional cases.

\begin{longtable}
{|p{.04\textwidth} |p{.50\textwidth} | p{.04\textwidth} | p{.18\textwidth} | p{.12\textwidth} |}
\hline
No. &Type $T$ & dim & DM Data $6\alpha$  & Complete? \\\hline
    $1$ & $(1,3,0,0)+(1,0,3,0)+(1,0,0,3)$ & $9$ & $({1}^{12})$ & Y \\\hline
    $2$ & $(1,3,0,0)+(1,0,3,0)+(1,0,0,2)+(0,0,0,1)$ & $8$ & $(1^{10},2)$ & Y \\\hline
    $3$ & $(1,3,0,0)+(1,0,3,0)+(1,0,0,1)+(0,0,0,2)$ & $7$ & $(1^8,2^2)$ & Y  \\\hline
    $4$ & $(1,3,0,0)+(1,0,2,0)+(1,0,0,2)+(0,0,1,0)+(0,0,0,1)$ & $7$ & $(1^8,2^2)$ & Y  \\\hline
    $5$ & $(1,3,0,0)+(1,0,2,0)+(1,0,0,1)+(0,0,1,0)+(0,0,0,2)$ & $6$ & $(1^6, 2^3)$ & Y  \\\hline
    $6$ & $(1,3,0,0)+(1,0,1,0)+(1,0,0,1)+(0,0,2,0)+(0,0,0,2)$ & $5$ & $(1^4, 2^4)$ & Y  \\\hline
    $7$ & $(1,2,0,0)+(1,0,2,0)+(1,0,0,2)+(0,1,0,0)+(0,0,1,0)+(0,0,0,1)$ & $6$ & $(1^6,2^3)$  & Y \\\hline
    $8$ & $(1,2,0,0)+(1,0,2,0)+(1,0,0,1)+(0,1,0,0)+(0,0,1,0)+(0,0,0,2)$ & $5$ & $(1^4,2^4)$ & Y  \\\hline
    $9$ & $(1,2,0,0)+(1,0,1,0)+(1,0,0,1)+(0,1,0,0)+(0,0,2,0)+(0,0,0,2)$ & $4$ & $(1^2,2^5)$  & Y \\\hline
    $10$ & $(1,1,0,0)+(1,0,1,0)+(1,0,0,1)+(0,2,0,0)+(0,0,2,0)+(0,0,0,2)$ & $3$ & $(2^6)$  & Y \\\hline

\caption{Deligne--Mostow Data in Four-Dimensional Cases}
\label{table: dimension 4}
\end{longtable}

\section{Hermitian forms and relations to Deligne--Mostow ball quotients}
\label{section: hermitian form refinement}
In \S\ref{section: 3 DM}, we relate most of the moduli spaces for the Calabi--Yau threefolds to the Deligne--Mostow ball quotients through fibrations of those threefolds. Several classes do not fall into this category because fibrations are not isotrivial. In this section, we use the results and methods of \cite{yu2024commensurability} to compute how the skew-Hermitian forms $H^3_\chi(Y)$ changes under refinements. A direct corollary is the classification of conformal classes of the Hermitian forms and complete commensurability relations to Deligne--Mostow ball quotients.

\subsection{Hermitian Forms and Refinements}
Let $Y$ be a degree-$d$ cyclic cover of $Z$ branching along divisor $D$ with $d\geq 3$. Assume that $\dim Z=n$ and $D$ is normal crossing. Recall that the Poincar\'e pairing 
\begin{equation*}
    H^n_\chi (Y, \QQ[\zeta_d])\times H^n_{\overline{\chi}}(Y, \QQ[\zeta_d])\to \QQ[\zeta_d]
\end{equation*}
induces a Hermitian form on $H^n(Y, \QQ[\zeta_d])$ when $n$ is even and a skew-Hermitian form when $n$ is odd. Let $U$ be the complement of $D$ in $Z$. We denote by $\pi^\prime=\pi|_{\pi^{-1}(U)}$. Then $\pi^\prime\colon\pi^{-1}(U)\to U$ is an unbranched degree-$d$ cyclic covering. Then the push-forward $\pi^\prime_*(\QQ[\zeta_d])$ has an action of cyclic group $\mu_d$. The character eigensheaf $\pi^\prime_*(\QQ[\zeta_d])$ is a rank-one local system $\LL$ with monodromy $\exp(-{2\pi \sqrt{-1}\over d})$ around each irreducible component of $D$. Then we have the following description of (skew-)Hermitian form $H^3_\chi(Y, \QQ[\zeta_d])$.
\begin{prop}
\label{Proposition: reduce to rk-1 local system on U}
Let $\LL^\vee$ be the dual local system of $\LL$ and it is isomorphic to the complex conjugate $\overline{\LL}$.
   \begin{enumerate}
       \item The natural map $H^n_c(U, \LL)\to H^n(U, \LL)$ is an isomorphism. 
       \item The Poincar\'e pairing
       \begin{equation*}
           H^n(U, \LL)\times H^n_c(U, \LL^\vee)\to \QQ[\zeta_d]
       \end{equation*}
       together with the isomorphism above induces a Hermitian form on $H^n(U, \LL)$ when $n$ is even, and a skew-Hermtian form when $n$ is odd.
       \item There is an isometry between (skew)-Hermitian forms
       \begin{equation*}
           H^n_\chi(Y)\cong H^n(U,\LL)\otimes \langle\gamma_0\rangle
       \end{equation*}
       where $\gamma_0$ has self-intersection $\langle\gamma_0, \gamma_0\rangle=d$.
   \end{enumerate} 
\end{prop}

\begin{proof}
See \cite[Proposition 2.6.1]{deligne1986monodromy} for dimension-one case. Let $j\colon U\to Z$ be the embedding map. Then the maps $Rj_!(\LL)[n]\to \IC(\LL)\to  Rj_*(\LL)[n]$ are isomorphisms as perverse sheaves. This is by local calculation, or more precisely, since the monodromy around each $D_j$ is nontrivial, thus the cohomology groups of nontrivial rank-one local system on the punctured discs are zero. Taking hypercohomology, we have $H^n_c(U, \LL)\to H^n(U, \LL)$. So we have a (skew)-Hermitian form on $H^n(U, \LL)$ because $$H^n_c(U, \LL^\vee)\cong H^n_c(U, \overline{\LL})\cong \overline{H^n(U, \LL)}.$$

Applying Leray-Hirsch to $Y\to Z$ and the cyclic group action, we have isomorphisms 
\[
H^n_\chi(Y, \QQ[\zeta_d])\cong H^n(U, \LL)\cong H^n_c(U, \LL).
\]
Let $F$ be a fiber of $\pi^\prime$ consisting of $d$ points and the cyclic group action. Then the Hermitian space $H^0_\chi(F, \QQ[\zeta_d])$ is generated by $\gamma_0$ with self-intersection $d$. So we have the last statement.
\end{proof}

Next we show a refinement relation for (skew)-Hermitian forms $H^n(Y, \QQ[\zeta_d])$ when the divisors $D$ degenerate. Especially, we only need the cases when $(Z,d,L)=((\PP^1)^3,3,\calO(1)^{\boxtimes 3})$. Suppose $T=(L_1,\cdots,L_m)$ is a partition of $3L$ such that $(Z,d,L,T)$ satisfies Condition \ref{condition}.
\begin{prop}
\label{proposition: hermitian form after split}
For type $T$ containing $(1,1,0)$ (respectively, $(2,1,0)$), and $T'$ obtained from $T$ by splitting $(1,1,0)$ as $(1,0,0)$ and $(0,1,0)$ (respectively, by splitting $(2,1,0)$ as $(1,1,0)$ and $(0,1,0)$). Let $Y_T$ (respectively $Y_{T^\prime}$) the Calabi--Yau orbifold of type $T$ (respectively $T^\prime$). Then there is an orthogonal sum of skew Hermitian forms:
\begin{equation*}
H^3_\chi(Y_T, \QQ[\zeta_3])=H^3_\chi(Y_{T'}, \QQ[\zeta_3])\oplus \langle \gamma_1\rangle,
\end{equation*}
where $\gamma_1\in H^3_\chi(Y_T, \QQ[\zeta_3])$ has self-intersection $\langle\gamma_1, \gamma_1\rangle=-\sqrt{-3}$.
\end{prop}

\begin{proof}
We follow the notation in \S\ref{section: refinement, incidence and complete}. We only consider the case with $L_1=\calO(1,1,0)$, $M_0=\calO(1,0,0)$, and $M_1=\calO(0,1,0)$. The other case is similar. The Calabi--Yau orbifolds $Y_T$ and $Y_{T^\prime}$ fit into a one-parameter degeneration over unit disc $\Delta=\{t\mid |t|<1\}$ constructed by Equation \eqref{equation: refinement}. The main idea is to find explicit cycles in the orthogonal complement of $H^n_\chi(Y_0)\cong H^n_\chi(Y_t)^M$ in Proposition \ref{proposition: Clemens-Schmid sequence for refinement}. 

Denote by $Z=P_1\times P_2\times P_3$. Let $D_1=\{f_1=0\}\in |\calO(1,1,0)|$ , $D_{10}=\{g_0=0\}\in |\calO(1,0,0)|$ and $D_{11}=\{g_1=0\}\in |\calO(0,1,0)|$. Denote by the branching divisor $D_t=\{(tf_1+g_0 g_1)f_2\cdots f_m=0\}$. Then $D_{10}\cap D_{11}=\{p\}\times P_3 $. Then $p$ has two open neighborhoods $V_1, V_2$ such that $V_1\subset \overline{V_1}\subset V_2$, and each $V_1$ is biholomorphic to $\Delta_1\times \Delta_2=\{(x,y)\}$. We further assume that the divisor $\{tf_1+g_0g_1=0\}$ is given by equation $xy=t$ under suitable choice of local coordinates. Then $\{p\}\times P_3\cap \{f_2\cdots f_m=0\}$ is equal to $\{p\}\times \{p_1, p_2, p_3\}$, which consists of three distinct points. For each $t\neq 0$, we denote by $U_t=Z-D_t$ and $\LL_t$ the rank-one local system on $U_t$ considered in Proposition \ref{Proposition: reduce to rk-1 local system on U}. Then the middle-dimensional cohomology of $\LL_t$ fits into the Mayer–Vietoris sequence
\begin{eqnarray*}
    &H^2((V_2-\overline{V_1})\times P_3-D_t,\LL_t)\to H^3(U_t, \LL_t)\to \\ 
    &H^3(V_2\times P_3-D_t, \LL_t)\oplus H^3(Z-(\overline{V_1}\times P_3\cup D_t), \LL_t)\to H^3((V_2-\overline{V_1})\times P_3-D_t,\LL_t).
\end{eqnarray*}
Next we calculate the monodromy $M$ on $H^3(U, \LL_t)\cong H^3_\chi(Y_t,\QQ[\zeta_3])$ when $t$ winds around zero counterclockwise by considering the monodromy on each term in the exact sequence. 

The pair $(V_2\times P_3-D_t, \LL_t)$ is isomorphic to $$((\Delta_1\times \Delta_2-\{xy=t\})\times (\PP^1- \{p_1, p_2, p_3\}), \LL_1\boxtimes\LL_2)$$ 
where $\LL_1$ and $\LL_2$ are rank-one local systems with monodromy $\exp(-{2\pi \sqrt{-1}\over 3})$ around $\{xy=t\}$ and the three punctures $p_1, p_2, p_3$ on $\PP^1$. Then we have
$$
H^3(V_2\times P_3-D_t, \LL_t)\cong H^2(\Delta_1\times \Delta_2-\{xy=t\}, \LL_1)\otimes H^1(\PP^1- \{p_1, p_2, p_3\}, \LL_2)
$$
The monodromy on $H^3(V_2\times P_3-D_t, \LL_t)$ is reduced to monodromy of $H^2(\Delta_1\times \Delta_2-\{xy=t\}, \LL_1)$. Under suitable change of coordinates, we consider $H^2(\Delta_1\times \Delta_2-\{x^2-y^2=t\}, \LL)$ where $\LL$ is a rank-one local system with monodromy $\exp(-{2\pi \sqrt{-1}\over 3})$ around $\{x^2-y^2=t\}$. Then we project to $x$-component and use Leray-Hirsch. The fibers are twice-punctured disc and there are two discriminant points with coordinates $\pm\sqrt{t}$. So 
\[
H^2(\Delta_1\times \Delta_2-\{x^2-y^2=t\}, \LL)\cong H^1(\Delta_1-\{\pm\sqrt{t}\},\LL^\prime),
\]
where $\LL^\prime$ has monodromy $\exp({\pi \sqrt{-1}\over 3})$ around $\pm\sqrt{t}$ according to \cite[Proposition 4.1]{yu2024commensurability}. So $$ \dim H^2(\Delta_1\times \Delta_2-\{x^2-y^2=t\}, \LL)=1$$ and it is generated by cycle class $\gamma_1^\prime$. 
When $t$ wind around zero counterclockwise, the two discriminant points switch their positions. So the same argument as \cite[Proposition 4.1]{yu2024commensurability} shows that the monodromy operator $M$ on $H^1(\Delta_1-\{\pm\sqrt{t}\},\LL^\prime)$ is multiplication by $\exp({2\pi \sqrt{-1}\over 3})$. The self-intersection of cycle $\gamma_1^\prime$ is 
\[
{1-\exp(-{2\pi \sqrt{-1}\over 3})\cdot\exp(-{2\pi \sqrt{-1}\over 3})\over (1-\exp(-{2\pi \sqrt{-1}\over 3}))\cdot (1-\exp(-{2\pi \sqrt{-1}\over 3}))}\cdot {1-\exp({\pi \sqrt{-1}\over 3})\cdot\exp({\pi \sqrt{-1}\over 3})\over (1-\exp({\pi \sqrt{-1}\over 3}))\cdot (1-\exp({\pi \sqrt{-1}\over 3}))}=1
\]
by \cite[Proposition 7.4]{yu2024commensurability}.

On the other hand, the constant factor $H^1(\PP^1- \{p_1, p_2, p_3\}, \LL_2)$ is generated by cycle $\gamma_1^{\prime\prime}$ with self-intersection
\[
{1-\exp(-{2\pi \sqrt{-1}\over 3})\cdot\exp(-{2\pi \sqrt{-1}\over 3})\over (1-\exp(-{2\pi \sqrt{-1}\over 3}))\cdot (1-\exp(-{2\pi \sqrt{-1}\over 3}))}=-{\sqrt{-3}\over 3}.
\]
In conclusion, the space $H^3(V_2\times P_3-D_t, \LL_t)$ is generated by $\widetilde{\gamma_1}=\gamma_1^{\prime}\otimes \gamma_1^{\prime\prime}$ with self-intersection $-{\sqrt{-3}\over 3}$ and monodromy $$M\colon \gamma_1\mapsto \exp({2\pi \sqrt{-1}\over 3})\gamma_1.$$

The pair $((V_2-\overline{V_1})\times P_3-D_t,\LL_t)$ is isomorphic to 
$$
((\Delta_1\times \Delta_2-\{xy=0\})\times (\PP^1- \{p_1, p_2, p_3\}), \LL_1^\prime\boxtimes\LL_2)
$$
where $\LL_1^\prime$ is rank-one local systems with monodromy $\exp(-{2\pi \sqrt{-1}\over 3})$ around each component of $\{xy=0\}$. Since the nontrivial rank-one local systems on punctured discs have vanishing cohomology groups, so $$H^2((V_2-\overline{V_1})\times P_3-D_t,\LL_t)=H^3((V_2-\overline{V_1})\times P_3-D_t,\LL_t)=0.$$

The Euler characteristic calculation shows that $\dim H^3_\chi(Y_T)=\dim H^3_\chi(Y_{T^\prime})+1$. Together with Proposition \ref{proposition: Clemens-Schmid sequence for refinement}, we have that $M$ acts as identity on $H^3(Z-(\overline{V_1}\times P_3\cup D_t), \LL_t)$. The orthogonal complement of $H^3_\chi(Y_{T'}, \QQ[\zeta_3])$ in $H^3_\chi(Y_T, \QQ[\zeta_3])$ is the $\exp({2\pi \sqrt{-1}\over 3})$-eigenspace of $M$, and it is generated by a cycle $\gamma_1$ corresponding to $\widetilde{\gamma_1}$. So we have the conclusion by Proposition \ref{Proposition: reduce to rk-1 local system on U}.
\end{proof}

In \S\ref{section: 3 DM}, we relate some of ball-type moduli spaces for Calabi--Yau orbifolds to Deligne--Mostow ball quotients by fibration of those Calabi--Yau orbifolds. The following proposition gives a more detailed relation on the corresponding skew-Hermitian forms.
\begin{prop}
\label{proposition: relation of hermitian forms for T and mu}
Suppose a type $T$ satisfies Condition \ref{Condition: CY fiber rigid}. Then the projection of $Y_T$ to the $i$-th factor $\PP^1$ has isotrivial K3 as fibers and induces the corresponding Deligne--Mostow tuple $\mu$. We have the following isometry between skew-Hermitian forms
\begin{equation*}
H^3_\chi(Y_T, \QQ[\zeta_3])=H_\mu\otimes \langle \gamma_2\rangle 
\end{equation*}
where $H_\mu$ the Deligne--Mostow skew-Hermitian form and $\langle\gamma_2, \gamma_2\rangle=-1$.
\end{prop}

\begin{proof}
Let $F\to \PP^1\times \PP^1$ be degree-three cyclic covering with branching divisor $D=D_1+D_2$ in linear systems $D_1\in |\calO(3,0)|$ and $D_2\in |\calO(0,3)|$. Then $\dim H^2_\chi(F)=1$ and it is generated by cycle $\gamma_2$ with self-intersection
\[
3\cdot({1-\exp(-{2\pi \sqrt{-1}\over 3})\cdot\exp(-{2\pi \sqrt{-1}\over 3})\over (1-\exp(-{2\pi \sqrt{-1}\over 3}))\cdot (1-\exp(-{2\pi \sqrt{-1}\over 3}))})^2=-1
\]
The conclusion follows from applying the Leray-Hirsch theorem to fibration $Y_T\to \PP^1$.
\end{proof}

\subsection{Conformal classes and commensurability to Deligne--Mostow lattices}
There are $9$ cases in Table \ref{table: CY Deligne--Mostow} with superscript $*$. We explain how to classify the commensurability classes of those monodromy groups and their relations to Deligne--Mostow lattices based on \cite{yu2024commensurability}. 

Recall that in \S\ref{section: hermitian form refinement}, the degree-three cyclic cover $Y$ induces a $\QQ[\zeta_3]$-valued skew-Hermitian form on $H^n(Y,\QQ[\zeta_3])$ when $n=\dim Y$ is odd. Under any $\QQ[\zeta_3]$-basis, the determinant of the corresponding Gram--Schmidt matrix gives an element in $\QQ[\zeta_3]^\times$ up to the multiplication of elements in $N_{\QQ[\zeta_3]/\QQ}(\QQ[\zeta_3]^\times)$. We denote this equivalence class by $\det (H^{n}_\chi(Y, \QQ[\zeta_3]))$.

From \cite[Theorem 6.12 (4) (5), Proposition 7.8]{yu2024commensurability}, we obtain the following criterion for commensurability relations to Deligne--Mostow lattices.

\begin{thm}
    \label{thm: conformal classes and commensurability}
Let $Y$ be a Calabi--Yau threefold of given type $T$ in Theorem \ref{theorem: main} and assume $m=h^{2,1}_\chi\geq 2$. Denote by $\Gamma\subset\PU(1,m)$ the monodromy group arising from such families. Assume $\Gamma_\mu\subset\PU(1,m)$ is a Deligne--Mostow lattices. Let $d$ be the common denominator of $\mu$. Then $\Gamma$ and $\Gamma_\mu$ are commensurable if and only if one of the following holds
\begin{enumerate}
    \item $m$ is even and $d=3,6$
    \item $m$ is odd, $d=3,6$ and $${\det H^{3}_\chi(Y, \QQ[\zeta_3])\over \det H_\mu} \in N_{\QQ[\zeta_3]/\QQ}(\QQ[\zeta_3]^\times).$$
\end{enumerate}
\end{thm}

\begin{ex}
For example, let $T=(2,1,0)+(1,0,2)+(0,2,0)+(0,0,1)$, $T'=(2,1,0)+(1,0,2)+(0,1,1)+(0,1,0)$, $T''=(2,1,0)+(1,0,2)+(0,2,1)$. By projecting to the first coordinate, we know the associated Deligne--Mostow date for $T$ is $\mu=({1\over 6}, {1\over 6}, {1\over 3}, {1\over 3}, {1\over 3}, {1\over 3}, {1\over 3})$. We have (see \cite[Proposition 7.8]{yu2024commensurability})
\[
\det H_\mu={1\over (1-\exp({\pi \sqrt{-1}\over 3}))^2 (1-\exp({2\pi \sqrt{-1}\over 3}))^5} = -{\sqrt{-3}\over 27}.
\]
By Proposition \ref{proposition: relation of hermitian forms for T and mu}, $\det H_T=-\det H_\mu = {\sqrt{-3}\over 27}$.
By Proposition \ref{proposition: hermitian form after split}, $\det H_{T'}=-\sqrt{-3} \det H_T={1\over 9}$ and this equals to 
\[
{1\over (1-\exp({\pi \sqrt{-1}\over 3}))^4 (1-\exp({2\pi \sqrt{-1}\over 3}))^4}.
\]
Then by Theorem \ref{thm: conformal classes and commensurability} we know the monodromy group $\Gamma_{T'}$ is commensurable to the Deligne--Mostow monodromy group with $\mu'= ({1\over 6}, {1\over 6}, {1\over 6}, {1\over 6}, {1\over 3}, {1\over 3}, {1\over 3}, {1\over 3})$. Similarly we know $\Gamma_{T''}$ is commensurable to $\Gamma_{\mu''}$ for $\mu'' = ({1\over 6}, {1\over 6}, {1\over 6}, {1\over 6}, {1\over 6}, {1\over 6}, {1\over 3}, {1\over 3}, {1\over 3})$.
\end{ex}

\bibliography{reference}

\Addresses
\end{document}